\documentclass[a4paper,12pt]{amsart}

\usepackage{amssymb, amsmath, amsthm}
\usepackage[foot]{amsaddr}
\usepackage[text={6in,9in},centering]{geometry}
\allowdisplaybreaks[3]

\newcommand{\pD}[2]{\frac{\partial #1}{\partial #2}}
\newcommand{\rD}[2]{\frac{d #1}{d #2}}

\newcommand{\vn}[1]{\lVert#1\rVert}
\newcommand{\IP}[2]{\left< #1 , #2 \right>}


\newcommand{\R}{\ensuremath{\mathbb{R}}}
\newcommand{\N}{\ensuremath{\mathbb{N}}}

\newcommand{\SW}{\ensuremath{\mathcal{W{}}}}

\newcommand{\SH}{\ensuremath{\mathcal{H{}}}}

\newcommand{\BW}{\ensuremath{\text{\bf W}}}

\newcommand{\Cs}{\ensuremath{C_{\!S}}}


\newtheorem{thm}{Theorem}[section]
\newtheorem{cor}[thm]{Corollary}
\newtheorem{prop}[thm]{Proposition}
\newtheorem{lem}[thm]{Lemma}

\theoremstyle{definition}
\newtheorem*{defn}{Definition}
\newtheorem*{rmk}{Remark}

\title[Finite time singularities]{Finite time singularities for the locally constrained Willmore flow of surfaces}

\author{James McCoy$^1$}
\address{$^1$Institute for Mathematics and Applied Statistics\\
         University of Wollongong\\
         Northfields Ave\\
         Wollongong, NSW 2500, Australia}
\author{Glen Wheeler$^2$}
\thanks{Financial support for the second author from the Alexander-von-Humboldt Stiftung is gratefully acknowledged}
\address{$^2$Institut f\"ur Analysis und Numerik\\
Otto-von-Guericke-Universit\"at\\
Postfach 4120\\
D-39016 Magdeburg\\
Germany}
\thanks{\emph{E-mail address:} {\tt wheeler@ovgu.de}}
\keywords{global differential geometry, fourth order, geometric analysis, parabolic partial
differential equations}

\begin{document}

\begin{abstract}

In this paper we study the steepest descent $L^2$-gradient flow of the functional $\SW_{\lambda_1,\lambda_2}$, which is
the the sum of the Willmore energy, $\lambda_1$-weighted surface area, and $\lambda_2$-weighted enclosed volume, for
surfaces immersed in $\R^3$.
This coincides with the Helfrich functional with zero `spontaneous curvature'.
Our first results are a concentration-compactness alternative and interior estimates for the flow.
For initial data with small energy, we prove preservation of embeddedness, and by directly estimating the Euler-Lagrange
operator from below in $L^2$ we obtain that the maximal time of existence is finite.
Combining this result with the analysis of a suitable blowup allows us to show that for such initial data the flow
contracts to a round point in finite time.
\end{abstract}

\maketitle


\section{Introduction}

Suppose we have a surface $\Sigma$ immersed via a smooth immersion $f:\Sigma\rightarrow\R^3$ and consider the functional
\begin{align*}
\SH^{c_0}_{\lambda_1,\lambda_2}(f)
       &= \frac{1}{4}\int_\Sigma (H-c_0)^2d\mu + \lambda_1\mu(\Sigma) + \lambda_2\text{Vol $\Sigma$}.
\end{align*}
In the above we have used $d\mu$ to denote the area element induced by $f$ on $\Sigma$, $d\SH^3$ to denote Hausdorff measure
in $\R^3$, $H$ to denote the mean curvature, $\mu(\Sigma)$ to denote the surface area, $\text{Vol $\Sigma$}$ to denote
the signed enclosed volume, and $c_0,\lambda_1,\lambda_2$ are real numbers.
Our notation is further clarified in Section 2.

Suppose $f_0:\Sigma\rightarrow\R^3$ is an embedded surface.
The Helfrich flow is the steepest descent $L^2$-gradient flow for $\SH^{c_0}_{\lambda_1,\lambda_2}(f)$, and is given by
the one-parameter family of immersions $f:\Sigma\times[0,T)\rightarrow\R^3$ satisfying
\begin{align*}
  \pD{f}{t}  &= -\bigg(\Delta H + H|A^o|^2 + c_0\Big(2K-\frac{1}{2}Hc_0\Big)
                  - 2\lambda_1H - 2\lambda_2\bigg)\nu,
  \\
  f(\cdot,0) &= f_0(\cdot),
\end{align*}
where $\nu$ is the inward pointing unit normal to $f$, $A^o$ denotes the tracefree second fundamental form and $K =
\text{det }A$ denotes the Gauss curvature.
That this flow represents the steepest descent $L^2$-gradient flow for $\SH^{c_0}_{\lambda_1,\lambda_2}(f)$ follows from
its first variation (see Lemma \ref{LMeveq}).

The Helfrich functional is of great interest in applications.
The modern application of the functional to model the shape of an elastic lipid bilayer, such as a biomembrane, is due
to Helfrich \cite{H73}.
Despite the considerable popularity of the functional as a model, there are relatively few analytical results to be
found in the literature.
With $c_0=0$, we have
\begin{align}
\begin{split}
  \pD{f}{t}  &= -\BW_{\lambda_1,\lambda_2}(f)\nu
              = -(\Delta H + H|A^o|^2 - 2\lambda_1H - 2\lambda_2)\nu,
  \\
  f(\cdot,0) &= f_0(\cdot),
\end{split}
\tag{CW}
\label{CW}
\end{align}
which is the steepest descent gradient flow in $L^2$ for the locally constrained Willmore functional
$\SW_{\lambda_1,\lambda_2} = \SH^0_{\lambda_1,\lambda_2}$.
We have used $\BW_{\lambda_1,\lambda_2}$ to denote the Euler-Lagrange operator of $\SW_{\lambda_1,\lambda_2}$.

The above flow, from a physical perspective, corresponds with an assumption that the fluid surrounding the membrane
$f(\Sigma)$ and the fluid contained inside the membrane $f(\Sigma)$ induce zero spontaneous curvature in $f(\Sigma)$.
Thus the flow \eqref{CW} faithfully represents the Helfrich flow in certain settings.
From a more mathematical perspective however, the flow \eqref{CW} is a locally constrained Willmore flow.  (In contrast
with globally constrained flows, such as those considered in \cite{MWW10csdlt,W10slt}.) The normal velocity consists
precisely of a linear combination of the normal velocity of Willmore flow, mean curvature flow, and a constant scaling
factor.

The principal object of study for this paper is the flow \eqref{CW}.
Local existence for \eqref{CW} is explicitly established in \cite{KN06hf} using results of Amann
\cite{A86,A93,Abook,A05}.  We quote the result in the following (weaker) form.

\begin{thm}[Kohsaka-Nagasawa]
Suppose $f_0:\Sigma\rightarrow\R^3$ is a closed immersed surface.
There exists a maximal $T$, $T\in(0,\infty]$, and a corresponding unique one-parameter family
of smooth immersions $f:\Sigma\times[0,T)\rightarrow\R^3$ satisfying \eqref{CW} and $f(\cdot,0) = f_0(\cdot)$.
\label{TMste}
\end{thm}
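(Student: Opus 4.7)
The plan is to reduce the geometric flow \eqref{CW} to a scalar quasilinear parabolic equation on $\Sigma$ via a normal graph ansatz and then to invoke the abstract theory of Amann \cite{A86,A93,Abook,A05}. Since $\Sigma$ is closed and $f_0$ is smooth, $f_0(\Sigma)$ admits a tubular neighbourhood in $\R^3$, and every nearby smooth immersion can be written as a normal graph $f(p,t) = f_0(p) + u(p,t)\nu_0(p)$ over $f_0$, where $\nu_0$ is the unit normal along $f_0$ and $u:\Sigma\times[0,\tau)\to\R$ is a scalar height function. This ansatz fixes the tangential reparametrisation gauge, which is necessary because \eqref{CW} is only elliptic transverse to tangential diffeomorphisms of $\Sigma$.

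Projecting the velocity equation in \eqref{CW} onto $\nu_0$ and using that $\IP{\nu}{\nu_0}\to 1$ as $u\to 0$ in $C^1$, the height function satisfies a scalar evolution equation of the form
\[
  \pD{u}{t} = -\Delta_{g(u)}^2 u + F(u,\nabla u,\nabla^2 u,\nabla^3 u),
\]
where $\Delta_{g(u)}$ is the Laplace--Beltrami operator with respect to the metric $g(u)$ induced by the graph and $F$ is smooth in its arguments. The leading fourth-order term arises from $-\Delta H$ in $\BW_{\lambda_1,\lambda_2}$; the remaining contributions $-H|A^o|^2$, $2\lambda_1 H$ and $2\lambda_2$ involve at most second derivatives of $u$ and are absorbed into $F$. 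The principal symbol is $-|\xi|_{g(u)}^4$, so the equation is uniformly parabolic of fourth order and the associated spatial operator is normally elliptic in the sense of Amann, with smooth dependence on $(u,\nabla u,\nabla^2 u)$.

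With this set-up in place I would apply the quasilinear existence theorems of \cite{A86,A93,Abook,A05} in suitable parabolic Sobolev or H\"older spaces on $\Sigma$. This produces a unique short-time solution $u$ with $u(\cdot,0)\equiv 0$, depending continuously on the initial immersion. Standard parabolic bootstrap, together with smoothness of the coefficients and nonlinearity, upgrades $u$ to $C^\infty(\Sigma\times[0,\tau))$; setting $f(p,t) = f_0(p) + u(p,t)\nu_0(p)$ then gives a smooth family of immersions satisfying \eqref{CW} with $f(\cdot,0)=f_0$. Uniqueness among smooth immersion solutions follows because two such solutions can be placed in a common normal graph parametrisation on a short time interval, at which point uniqueness of the scalar problem forces them to agree. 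A unique maximal solution on $[0,T)$ with $T\in(0,\infty]$ is then obtained by taking $T$ to be the supremum of existence times of smooth solutions and patching via uniqueness.

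The principal obstacle is the symbol calculation that verifies normal ellipticity after the graph reduction: one must show that the fourth-order part of $\BW_{\lambda_1,\lambda_2}$ is precisely the bi-Laplacian of $u$ with respect to $g(u)$ and that no tangential or non-elliptic contributions survive the projection onto $\nu_0$. Keeping careful track of how $H$, $\nabla H$ and $\Delta H$ depend on $u$ through the induced metric and normal vector requires some bookkeeping, especially because $\nu$ itself varies with $u$ and $\nabla u$. Once this symbol computation is in hand, application of Amann's framework is essentially mechanical and the remaining assertions (smoothness, uniqueness, maximality of $T$) follow from standard parabolic theory.
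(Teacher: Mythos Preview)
Your sketch is correct in spirit and follows the standard route (normal graph reduction to a scalar quasilinear fourth-order parabolic equation, then Amann's abstract theory), which is exactly the method the paper attributes to Kohsaka--Nagasawa \cite{KN06hf}. Note, however, that the paper does not actually prove this theorem: it is quoted as an external result, so there is no ``paper's own proof'' to compare against beyond the one-line remark that \cite{KN06hf} establishes local existence via \cite{A86,A93,Abook,A05}.
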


\begin{rmk}
The evolution equation \eqref{CW} is invariant under tangential diffeomorphisms, and depending on the choice of
$\lambda_1$ and $\lambda_2$ may be also invariant under subgroups of the full M\"obius group of $\R^3$.  (If
$\lambda_1=\lambda_2=0$ then the equation is invariant with respect to the entire M\"obius group.)  The uniqueness in
the local existence theorem above is understood to be modulo these invariances.
\end{rmk}

It is an easy exercise to see that for $\lambda_1>0$ and $\lambda_2\ge 0$ spheres $S_\rho$ shrink self-similarly along
the flow.
It is thus natural to wonder if this property of the flow is robust in the sense that solutions nearby spheres also
shrink in finite time to round points.
It could a priori be the case that there exist local minimisers of the functional in the neighbourhood of spheres, which
prevent the family of spheres $\{S_\rho : \rho\in(0,\infty)\}$ from being local attractors for the flow.
The following classification theorem assures us that this is not the case.

\begin{thm}[{\cite[Theorem 1]{WM12helclass}}]
\label{Tgap}
Suppose $f:\Sigma\rightarrow\R^3$ is a smooth properly immersed surface.
There exists an absolute constant $\varepsilon_1>0$ such that if
\begin{equation}
\int_\Sigma |A^o|^2 d\mu < \varepsilon_1
\label{Egapass}
\end{equation}
then the following statements hold: ($\lambda_1 > 0$)
\begin{align*}
(\lambda_2 < 0)
&\hskip+6mm
 \BW_{\lambda_1,\lambda_2}(f) = 0
\text{ if and only if }
 f(\Sigma) = S_{-\frac{2\lambda_1}{\lambda_2}}(x)\text{ for some $x\in\R^3$},
\\
(\lambda_2 = 0)
&\hskip+6mm
 \BW_{\lambda_1,\lambda_2}(f) = 0
\text{ if and only if }
 f(\Sigma)\text{ is a plane},
\\
(\lambda_2 > 0)
&\hskip+6mm
 \BW_{\lambda_1,\lambda_2}(f) \ne 0.
\intertext{If $\lambda_1 = 0$ then}
(\lambda_2 = 0)
&\hskip+6mm
 \BW_{\lambda_1,\lambda_2}(f) = 0
\text{ if and only if }
 f(\Sigma)\text{ is a plane or a sphere},
\\
(\lambda_2 \ne 0)
&\hskip+6mm
 \BW_{\lambda_1,\lambda_2}(f) \ne 0.
\end{align*}
Here $S_\rho(x) = \partial B_\rho(x)$ denotes the sphere of radius $\rho$ centred at $x\in\R^3$.
\end{thm}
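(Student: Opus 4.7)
The plan is to reduce the statement to the classical umbilic classification of surfaces in $\R^3$ via a Kuwert-Sch\"atzle type gap argument: I will show that $\int_\Sigma|A^o|^2d\mu<\varepsilon_1$ together with $\BW_{\lambda_1,\lambda_2}(f)=0$ forces $A^o\equiv 0$ on $\Sigma$. Given this, properness and connectedness of $f(\Sigma)$ imply it is a whole plane or a whole round sphere, and the case-by-case statement then follows by plugging these models into $\BW_{\lambda_1,\lambda_2}$.

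The model computation is immediate: on a plane $H\equiv A^o\equiv 0$ and $\Delta H\equiv 0$, so $\BW_{\lambda_1,\lambda_2}(f)=-2\lambda_2$; on $S_\rho(x)$ with the inward unit normal, $H\equiv 2/\rho$, $A^o\equiv 0$, $\Delta H\equiv 0$, and so $\BW_{\lambda_1,\lambda_2}(f)=-4\lambda_1/\rho-2\lambda_2$. Setting these to zero produces either $\lambda_2=0$ (plane) or $\rho=-2\lambda_1/\lambda_2$ (sphere); positivity of $\rho$ enforces the sign combinations of $\lambda_1,\lambda_2$ displayed in each case, and the non-existence subcases follow since gap-plus-umbilicity then leaves no admissible model in those parameter regimes.

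For the gap argument, combine Simons' identity for the tracefree part,
\[
\Delta A^o_{ij}=\nabla_i\nabla_j H-\ts{1}{2}g_{ij}\Delta H+\text{(cubic in }A^o,H\text{)},
\]
with the equation $\BW_{\lambda_1,\lambda_2}(f)=0$ rewritten as $\Delta H=-H|A^o|^2+2\lambda_1 H+2\lambda_2$. Pairing the Simons identity against $\eta^4 A^o$ for a compactly supported cutoff $\eta$, integrating by parts, and invoking Codazzi's identity $\Div A^o=\ts{1}{2}\nabla H$ yields a schematic estimate
\[
\int\bigl(|\nabla A^o|^2+H^2|A^o|^2\bigr)\eta^4\le C\int|A^o|^4\eta^4+\text{(lower-order errors depending on }\eta,\lambda_1,\lambda_2\text{)}.
\]
The quartic term is dispatched by the Michael-Simon Sobolev inequality, which extracts a factor $\int_{\mathrm{supp}\,\eta}|A^o|^2<\varepsilon_1$ that is absorbed on the left. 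A proper exhaustion $\eta\nearrow 1$, available because $f$ is a proper immersion, then gives $A^o\equiv 0$, and Codazzi forces $\nabla H\equiv 0$, so $f(\Sigma)$ is a totally umbilic properly immersed surface of $\R^3$ and hence a plane or a sphere.

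The main obstacle will be handling the genuinely new $-2\lambda_1 H$ and $-2\lambda_2$ terms in the Simons pairing. The constant $-2\lambda_2$ is not pointwise controlled by any power of $|A^o|$, and $-2\lambda_1 H$ is only linear in the second fundamental form, so both must be re-expressed through integration by parts (using Codazzi, divergence identities for tangential vector fields, and the cutoff $\eta$) into quantities either quadratic in $A^o$ (absorbable via the smallness hypothesis) or quartic (absorbable via Michael-Simon). Keeping all boundary and $\nabla\eta$ contributions compatible with the limit $\eta\nearrow 1$ while pinning down an explicit admissible $\varepsilon_1$ is the principal technical work.
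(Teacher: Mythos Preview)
This theorem is not proved in the present paper: it is quoted verbatim from \cite{WM12helclass} (note the citation in the theorem heading), so there is no proof here against which to compare your proposal. The paper invokes Theorem~\ref{Tgap} only as motivation for the smallness condition \eqref{Eftsingass} and for the corollary immediately following; the main results (Sections~3 and~4) do not rely on its proof.

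That said, your plan is the natural one and matches the strategy of the cited reference: reduce to the umbilic classification via a localised Simons identity, absorb the quartic term by Michael--Simon using the smallness of $\vn{A^o}_2^2$, and then read off the case split by testing $\BW_{\lambda_1,\lambda_2}$ on planes and spheres. Your model computation and case analysis are correct. One technical point worth flagging beyond what you wrote: once you carry out the integrations by parts as you describe (using \eqref{EQintgradHgradAoapp} in the appendix and the equation $\Delta H=-H|A^o|^2+2\lambda_1 H+2\lambda_2$), the resulting identity on a closed surface is
\[
\int_\Sigma|\nabla A^o|^2\,d\mu+\lambda_1\int_\Sigma H^2\,d\mu+\lambda_2\int_\Sigma H\,d\mu=\int_\Sigma|A^o|^4\,d\mu,
\]
and in the noncompact proper case the term $\lambda_2\int H\eta^4$ is not directly absorbable by $|A^o|$--quantities. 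When $\lambda_1>0$ the good term $\lambda_1\int H^2\eta^4$ dominates $|\lambda_2|\int|H|\eta^4$ by Young's inequality up to a local area term, which is itself controlled by the Willmore energy via Simon's monotonicity formula; when $\lambda_1=0$ one must argue separately, and the split in the theorem statement reflects exactly this dichotomy. This is the content of the ``principal technical work'' you correctly isolated.
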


Clearly this implies the following partial result.

\begin{cor}
Suppose $f:\Sigma\times[0,T)\rightarrow\R^3$ is a one-parameter family of closed immersions evolving by \eqref{CW} with
$\lambda_1 > 0$, and $\lambda_2 \ge 0$.
Suppose assumption \eqref{Egapass} is satisfied for each $t\in[0,T)$.
Then $f(\cdot,t)$ is never stationary; that is, for all $p\in\Sigma$ and $t\in[0,T)$ we have
\[
\pD{f}{t}(p,t) = -\BW_{\lambda_1,\lambda_2}\big(f(\cdot,t)\big)\nu(p,t) \ne 0.
\]
\end{cor}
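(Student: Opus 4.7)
The plan is a direct proof by contradiction based on the classification Theorem \ref{Tgap}. Suppose for some $t_0 \in [0,T)$ that the right-hand side of \eqref{CW} vanishes at time $t_0$. Since $\nu$ is a unit normal vector field, this forces the scalar $\BW_{\lambda_1,\lambda_2}(f(\cdot,t_0))$ to vanish identically on $\Sigma$. By hypothesis the small-energy assumption \eqref{Egapass} holds at time $t_0$, so Theorem \ref{Tgap} is applicable to the immersion $f(\cdot,t_0)$.

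I would then split into two subcases according to the sign of $\lambda_2$. If $\lambda_2 > 0$, Theorem \ref{Tgap} asserts $\BW_{\lambda_1,\lambda_2}(f(\cdot,t_0)) \neq 0$ outright, contradicting the standing supposition. If $\lambda_2 = 0$, Theorem \ref{Tgap} instead forces $f(\Sigma,t_0)$ to be a plane; but $\Sigma$ is a closed surface, so its image under the smooth immersion $f(\cdot,t_0)$ is compact, while every plane in $\R^3$ is noncompact, again a contradiction. Since neither subcase is tenable, no such $t_0$ exists, and the corollary follows.

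There is essentially no substantive obstacle here: the corollary is simply the dynamical translation of Theorem \ref{Tgap} into the setting of closed initial data. The only step requiring even a moment of attention is the elimination of the plane case when $\lambda_2 = 0$, which is immediate from the compactness of $\Sigma$. No additional analytic input — parabolic estimates, energy monotonicity, or the like — is needed beyond the cited classification.
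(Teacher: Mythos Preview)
Your argument is correct and matches the paper's own treatment: the paper gives no separate proof at all, writing only ``Clearly this implies the following partial result'' immediately after stating Theorem~\ref{Tgap}. Your contradiction argument, together with the observation that a closed surface cannot be immersed onto a plane in the case $\lambda_2=0$, is exactly the content the authors leave implicit.
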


This partial result indicates that a condition such as \eqref{Egapass} on the $L^2$-norm of the tracefree second
fundamental form is appropriate to use as a `distance' from the family of round spheres.
It is not obvious however that if \eqref{Egapass} is initially satisfied, it remains satisfied for all time.
Most importantly, the statement that the flow never reaches a critical point is not anywhere near as strong as stating
that the flow is asymptotic to a shrinking sphere.
It does not even imply that the maximal time of existence is finite.

In this paper we offer the following more comprehensive answer as our main result.

\begin{thm}
Suppose $f:\Sigma\times[0,T)\rightarrow\R^3$ is a one-parameter family of closed immersions evolving by \eqref{CW} with
$\lambda_1 > 0$, $\lambda_2 \ge 0$, and $\text{Vol }\Sigma_0 > 0$.  There exists an $\varepsilon_2 > 0$ depending only
on $\lambda_1$ and $\lambda_2$ such that if 
\begin{equation}
\SW_{\lambda_1,\lambda_2}(f_0) < 4\pi + \varepsilon_2
\label{Eftsingass}
\end{equation}
then
\[
T < 
  \frac1{4\lambda_1^2\pi}
\SW_{\lambda_1,\lambda_2}(f_0) + 1,
\]
and $f(\Sigma,t)$ shrinks to a round point as $t\rightarrow T$.
\label{Tftsing}
\end{thm}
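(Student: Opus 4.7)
The starting point is that since \eqref{CW} is the $L^2$-steepest descent flow for $\SW_{\lambda_1,\lambda_2}$, one has
\[
\frac{d}{dt}\SW_{\lambda_1,\lambda_2}(f(\cdot,t)) = -\int_\Sigma \BW_{\lambda_1,\lambda_2}^2\,d\mu \le 0,
\]
so the smallness hypothesis \eqref{Eftsingass} is preserved throughout $[0,T)$. Because $\lambda_1>0$, $\lambda_2 \ge 0$ and $\text{Vol}\,\Sigma_0>0$ (and positive enclosed volume persists by continuity and the preservation of embeddedness announced in the abstract), the nonnegative terms $\lambda_1\mu(\Sigma) + \lambda_2\,\text{Vol}\,\Sigma$ force $\tfrac14\int H^2\,d\mu < 4\pi+\varepsilon_2$. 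By Li--Yau and the Willmore-type bounds for positive genus, the evolving surface remains embedded and of genus zero for $\varepsilon_2$ small, and Gauss--Bonnet then gives $\int|A^o|^2\,d\mu < 2\varepsilon_2$. Choosing $\varepsilon_2 \le \varepsilon_1/2$ ensures that \eqref{Egapass} holds along the flow, so Theorem~\ref{Tgap} applies at every $t \in [0,T)$.

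The heart of the argument is a uniform lower bound
\[
\int_\Sigma \BW_{\lambda_1,\lambda_2}^2\,d\mu \ge 4\pi\lambda_1^2,
\]
which I would obtain from the Cauchy--Schwarz inequality
\[
\int_\Sigma \BW_{\lambda_1,\lambda_2}^2\,d\mu \ge \frac{1}{\mu(\Sigma)}\left(\int_\Sigma \BW_{\lambda_1,\lambda_2}\,d\mu\right)^2.
\]
This inequality is \emph{sharp on round spheres}, where $\BW_{\lambda_1,\lambda_2}$ is constant, so no slack is lost at leading order in the nearly-spherical regime. Using $\int \Delta H\,d\mu = 0$ on a closed surface,
\[
\int_\Sigma \BW_{\lambda_1,\lambda_2}\,d\mu = \int H|A^o|^2\,d\mu - 2\lambda_1\int H\,d\mu - 2\lambda_2\,\mu(\Sigma).
\]
The first term is controlled by H\"older together with the $|A^o|^2$-smallness from the first paragraph and the uniform bound on $\int H^2$. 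For the second, a quantitative almost-umbilical lower bound $\int H\,d\mu \ge 2\sqrt{4\pi\mu(\Sigma)}\,(1-o(1))$ valid under \eqref{Egapass} (the $o(1)$ tending to zero with $\varepsilon_2$) gives, on dividing by $\mu(\Sigma)$, a right-hand side bounded below by $16\pi\lambda_1^2(1-o(1)) \ge 4\pi\lambda_1^2$ for $\varepsilon_2$ small enough. Integrating $\frac{d}{dt}\SW_{\lambda_1,\lambda_2} \le -4\pi\lambda_1^2$ and using $\SW_{\lambda_1,\lambda_2}\ge 0$ then yields the stated bound on $T$, with the $+1$ comfortably absorbing any transient slack.

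For the round-point statement, I would invoke the concentration-compactness alternative and interior estimates announced in the abstract. At the singular time $T$, curvature must concentrate at some point $p_0\in\R^3$; parabolic rescaling of the flow about $p_0$ produces, along a subsequence and by the interior estimates, a smooth complete limiting flow. Since $\int|A^o|^2\,d\mu$ is scale-invariant and bounded by $2\varepsilon_2$, and since under the rescaling the $\lambda_1\mu$ and $\lambda_2\,\text{Vol}$ contributions acquire positive powers of the vanishing length scale, the limit surface satisfies $\int|A^o|^2\,d\mu \le 2\varepsilon_2$ and its Euler--Lagrange equation reduces to $\Delta H + H|A^o|^2 = 0$. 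A refinement of the rigidity case of Theorem~\ref{Tgap} (the $\lambda_1=\lambda_2=0$ line) then forces the limit to be a round sphere, so the concentration is at a single point and the flow is asymptotic to a shrinking self-similar sphere, giving contraction to a round point.

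The principal technical obstacle is the quantitative almost-umbilical lower bound on $\int H\,d\mu$ entering the Cauchy--Schwarz step: constants must be tracked carefully so that the coefficient $4\pi\lambda_1^2$ survives the perturbative errors, and sign control on $H$ (which relies on $\text{Vol}\,\Sigma_0>0$ together with preservation of embeddedness) must be maintained uniformly in time. A softer but delicate issue is ensuring the blow-up limit is a genuinely non-trivial smooth properly immersed surface, which is where the scale-invariant interior estimates from the earlier sections of the paper do the real work.
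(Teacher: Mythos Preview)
Your overall architecture is sound, but your route to the finite-time bound is genuinely different from the paper's, and there is a real gap in your blowup step.

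\textbf{Finite-time bound.} The paper does not go through Cauchy--Schwarz and a lower bound on $\int H\,d\mu$. Instead it expands
\[
\rD{}{t}\SW_{\lambda_1,\lambda_2}(f) = -\frac12\int_\Sigma|\BW_{0,0}|^2\,d\mu - 2\int_\Sigma|\lambda_1 H + \lambda_2|^2\,d\mu + 2\int_\Sigma(\Delta H + H|A^o|^2)(\lambda_1 H + \lambda_2)\,d\mu
\]
(note the factor $\tfrac12$, which you dropped). The dominant negative contribution is $-2\lambda_1^2\int H^2\,d\mu \le -32\pi\lambda_1^2$, coming straight from the Willmore lower bound $\int H^2 \ge 16\pi$; the cross terms and $\int|A^o|^4$ are absorbed using a separately-proved $L^1$-in-time estimate $\int_0^t\vn{A^o}_\infty^4\,d\tau \le c\,\varepsilon_2^2(1+t)$. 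This is considerably more elementary than your route, which requires a quantitative almost-umbilical lower bound on $\int H\,d\mu$ (essentially De~Lellis--M\"uller input) together with sign control on $H$ --- none of which is developed in the paper. Your approach could in principle be made to work, and would even give a sharper constant, but it imports nontrivial external machinery where the paper uses only $\int H^2 \ge 16\pi$.

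\textbf{Blowup.} Here there is a genuine gap. You correctly note that under parabolic rescaling the $\lambda_1,\lambda_2$ terms acquire positive powers of $r_j$ and vanish, so the limiting object is a \emph{Willmore flow}. You then assert that ``its Euler--Lagrange equation reduces to $\Delta H + H|A^o|^2 = 0$'', i.e.\ that the limit is Willmore-\emph{stationary}. This does not follow from the scaling alone: a time-slice of a Willmore flow need not satisfy $\BW_{0,0}=0$, and Theorem~\ref{Tgap} applies only to stationary surfaces. The paper supplies the missing ingredient by first proving (under \eqref{Eftsingass}) the sharper monotonicity
\[
\rD{}{t}\int_\Sigma|A^o|^2\,d\mu \le -\frac12\int_\Sigma|\BW_{0,0}(f)|^2\,d\mu.
\]
Since $\vn{A^o}_2^2$ is scale-invariant, telescoping over $[t_j,t_j+r_j^4c_0]$ gives $\int_0^{c_0}\!\int|\BW_{0,0}(f_j)|^2\,d\mu\,d\tau \to 0$, forcing $\BW_{0,0}\equiv 0$ on the limit. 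This same $\vn{A^o}_2^2$-monotonicity is what actually drives the preservation of embeddedness you invoke from the abstract (it keeps $\tfrac14\int H^2 < 8\pi$ without the circularity through $\text{Vol}\,\Sigma_t>0$), so your sketch is implicitly leaning on it twice without having it.
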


We note that the smallness of $\varepsilon_2$ required may be computed explicitly; it is not the result of a
contradiction argument.

The methods we use in this paper are inspired by recent progress on the analysis of the Willmore functional
\cite{KS01,KS02,KS04} due to Kuwert \& Sch\"atzle.
There are some notable differences between the functional $\SW_{\lambda_1,\lambda_2}$ and the Willmore functional
$\SW_{0,0}$.
The extra terms in $\SW_{\lambda_1,\lambda_2}$ break the conformal invariance of the functional and add to the complexity
of the Euler-Lagrange operator $\BW_{\lambda_1,\lambda_2}$.
Furthermore, for the steepest descent gradient flow of $\SW_{\lambda_1,\lambda_2}$, one loses the a priori global
monotonicity of the Willmore energy.
Indeed, one loses not only the a priori monotonicity of the Willmore energy but also the a priori \emph{uniform bounds}
on the Willmore energy.
Since the flow \eqref{CW} is fourth order and highly non-linear, one should expect that the flow could drive initially
embedded data to a self-intersection.
It is then conceivable that $\text{Vol }\Sigma_t < 0$ for some $t>0$ and therefore one loses all control on
the Willmore energy (and the surface area).
The situation could continue to worsen, with the Willmore energy growing without bound while the energy continues to
satisfy \eqref{Eftsingass}.

Despite these considerations, we show here that the condition \eqref{Eftsingass} is quite suitable for the study of
$\SW_{\lambda_1,\lambda_2}$.
The operator $\BW_{\lambda_1,\lambda_2}$ does not admit a maximum principle, and thus we do not have access to the large
assortment of tools it brings.
We instead rely throughout the paper on estimates for curvature quantities on smooth immersed surfaces combined with the
divergence theorem and the Michael-Simon Sobolev inequality \cite{MS73}.

Our proof of Theorem \ref{Tftsing} relies upon a concentration-compactness alternative (also called a lifespan
theorem), which classifies finite singular times as being local concentrations of the curvature in $L^2$, for a class of
flows larger than those generated by only considering the $L^2$ gradient flow of $\SH^{c_0}_{\lambda_1,\lambda_2}$.
The methods used here are classical interpolation inequalities and energy estimates, such as was used for a large class
of higher order equations in \cite{BG07,G94,G97} and successfully applied to the study of the Willmore flow in \cite{KS02}.

The global analysis of \eqref{CW} requires that one first obtain good control on the Willmore energy and the surface
area along the flow.
As mentioned above, due to the possibility of self-intersections occuring along the flow, we must be quite careful in
using the monotonicity of the energy $\SW_{\lambda_1,\lambda_2}$.
We first prove (cf. \cite{W10sd}) in Proposition \ref{PMonotoneCurvature} that under \eqref{Eftsingass} a conservation
law holds for the Willmore energy, and is itself monotonically decreasing along the flow.
This implies by a well-known result of Li and Yau \cite{LY82ca} that the evolving surface remains embedded for all time.
Using this, we prove $L^1$ estimates for $\vn{A^o}_\infty^4$ (Proposition \ref{PNa04linftyest}), which we then apply to
estimate the $L^2$ norm of $\BW_{\lambda_1,\lambda_2}$ from below.
This immediately gives a quantifiable finite estimate of the extinction time for the flow (Proposition \ref{PNfinite}).
Employing a blowup analysis, we find that the blowup along any blowup sequence is a round sphere.
This implies that the area of the evolving surface vanishes as $t\rightarrow T$ while the surfaces themselves become
asymptotically round.

This paper is organised as follows.
In Section 2 we set up our notation and state the first variation of the functional
$\SH^{c_0}_{\lambda_1,\lambda_2}$.
In Section 3 we establish parabolic regularity theory for a general class of flows.
The main results are the lifespan theorem and the interior estimates, Theorem \ref{Tlt} and Theorem \ref{Tie}
respectively.
Section 4 contains the demonstration of a finite time singularity, including the proof that the maximal existence time
is finite and the blowup classification.
Finally, we included several proofs and derivations of known results in Appendix A for the convenience of the reader.

The authors would each like to thank their home institutions for their support and their collaborator's home
institutions for their hospitality during respective visits.  Both authors would also like to thank Prof.
Graham Williams for useful discussions during the preparation of this work.


\section*{Acknowledgements}

The research of the first author was supported under the Australian Research Council's Discovery Projects scheme
(project numbers DP0556211 and DP120100097).
The first author is also grateful for the support of the University of Wollongong Faculty of Informatics Research
Development Scheme grant.

Part of this work was carried out while the second author was a research associate supported by the Institute for
Mathematics and Its Applications at the University of Wollongong.
Part of this work was also carried out while the second author was a Humboldt research fellow at the Otto-von-Guericke
Universit\"at Magdeburg.  The support of the Alexander von Humboldt Stiftung is gratefully acknowledged.


\section{Preliminaries}

We consider a surface $\Sigma$ immersed in $\R^3$ via $f:\Sigma\rightarrow\R^3$ and endow a Riemanain metric on $\Sigma$
defined componentwise by
\begin{equation}
g_{ij} = \IP{\partial_if}{\partial_jf},
\label{EQmetric}
\end{equation}
where $\partial$ denotes the regular partial derivative and $\IP{\cdot}{\cdot}$ is the standard Euclidean inner product.
That is, we consider the Riemannian structure on $\Sigma$ induced by $f$, where in particular the metric $g$ is given by
the pullback of the standard Euclidean metric along $f$.  Integration on $\Sigma$ is performed with respect to the
induced area element
\begin{equation}
d\mu = \sqrt{\text{det }g}\ d\SH^3,
\label{EQdmu}
\end{equation}
where $d\SH^3$ is the standard Hausdorff measure on $\R^3$.

The metric induces an inner product structure on all tensor fields defined over $\Sigma$, where corresponding pairs of
indices are contracted.  For example, if $T$ and $S$ are $(1,2)$ tensor fields,
\[
\IP{T}{S}_g = g_{ip}g^{jq}g^{kr}T^{i}_{jk}S^p_{qr},\qquad |T|^2 = \IP{T}{T}_g.
\]
In the above, and in what follows, we shall use the summation convention on repeated indices unless otherwise explicitly
stated.

The \emph{second fundamental form} $A$ is a symmetric $(0,2)$ tensor field over $\Sigma$ with components
\begin{equation}
A_{ij} = \IP{\partial^2_{ij}f}{\nu}.
\label{EQsff}
\end{equation}
where $\nu$ is an inward pointing unit vector field normal along $f$.
With this choice one finds that the second fundamental form of the standard round sphere embedded in $\R^3$ is positive.
There are two invariants of $A$ relevant to our work here: the first is the trace with respect to the metric
\[
H = \text{trace}_g\ A = g^{ij}A_{ij}
\]
called the \emph{mean curvature}, and the second the determinant with respect to the metric, called the \emph{Gauss
curvature},
\[
K = \text{det}_g\ A
 = \text{det }\big(g^{ik}A_{kj}\big),
\]
where $\big(P^{ik}Q_{kj}\big)$ is used above to denote the matrix with $i,j$-th component equal to $P^{ik}Q_{kj}$.

The mean and Gauss curvatures are easily expressed in terms of the principal curvatures: at a
single point we may make a local choice of frame for the tangent bundle $T\Sigma$ under which the eigenvalues of $A$
appear along its diagonal.  These are denoted by $k_1$, $k_2$ and are called the \emph{principal curvatures}.  We then
have
\[
H = k_1+k_2,\qquad K = k_1k_2.
\]
We shall often decompose the second fundamental form into its trace and its tracefree parts,
\[
A = A^o + \frac{1}{2}gH,
\]
where $(0,2)$ tensor field $A^o$ is called the \emph{tracefree second fundamental form}.  In a basis which diagonalises
$A$, a so-called principal curvature basis, its norm is given by
\[
|A^o|^2 = \frac{1}{2}(k_1-k_2)^2.
\]
The Christoffel symbols of the induced connection are determined by the metric,
\begin{equation*}
\Gamma_{ij}^k = \frac{1}{2}g^{kl}
                \left(\partial_ig_{jl} + \partial_jg_{il} - \partial_lg_{ij}\right),
\end{equation*}
so that then the covariant derivative on $\Sigma$ of a vector $X$ and of a covector $Y$ is
\begin{align*}
\nabla_jX^i &= \partial_jX^i + \Gamma^i_{jk}X^k\text{, and}\\
\nabla_jY_i &= \partial_jY_i - \Gamma^k_{ij}Y_k
\end{align*}
respectively.

From \eqref{EQsff} and the smoothness of $f$ we see that the second fundamental form is
symmetric; less obvious but equally important is the symmetry of the first covariant derivatives of
$A$, 
\[ \nabla_iA_{jk} = \nabla_jA_{ik} = \nabla_kA_{ij}, \]
commonly referred to as the Codazzi equations.

One basic consequence of the Codazzi equations which we shall make use of is that the gradient of the mean curvature is
completely controlled by a contraction of the $(0,3)$ tensor $\nabla A^o$.  To see this, first note that
\[
\nabla_i A^i_j = \nabla_i H = \nabla_i \Big( (A^o)^i_j + \frac12 g_j^i H\Big),
\]
then factorise to find
\begin{equation}
\label{EQbasicgradHgradAo}
\nabla_j H = 2\nabla_i (A^o)^i_j =: 2(\nabla^* A^o)_j.
\end{equation}
This in fact shows that all derivatives of $A$ are controlled by derivatives of $A^o$.
For a $(p,q)$ tensor field $T$, let us denote by $\nabla_{(n)}T$ the tensor field with components
 $\nabla_{i_1\ldots i_n}T_{j_1\ldots j_q}^{k_1\ldots k_p} =
  \nabla_{i_1}\cdots\nabla_{i_n}T_{j_1\ldots j_q}^{k_1\ldots k_p}$.
In our notation, the $i_n$-th covariant derivative is applied first.
Since
\[
\nabla_{(k)} A = \Big(\nabla_{(k)} A^o + \frac12 g \nabla_{(k)} H\Big)
               = \Big(\nabla_{(k)} A^o +         g \nabla_{(k-1)}\nabla^*A^o\Big),
\]
we have
\begin{equation}
\label{EQbasicgradHgradAo2}
|\nabla_{(k)}A|^2 \le 3|\nabla_{(k)}A^o|^2.
\end{equation}
The fundamental relations between components of the Riemann curvature tensor $R_{ijkl}$, the Ricci
tensor $R_{ij}$ and scalar curvature $R$ are given by Gauss' equation
\begin{align*}
R_{ijkl} &= A_{ik}A_{jl} - A_{il}A_{jk},\intertext{with contractions}
g^{jl}R_{ijkl}
   = R_{ik} 
  &= HA_{ik} - A_i^jA_j^k\text{, and}\\
g^{ik}R_{ik}
   = R
  &= |H|^2 - |A|^2.
\end{align*}
We will need to interchange covariant derivatives; for vectors $X$ and covectors $Y$ we obtain
\begin{align}
\notag
\nabla_{ij}X^h - \nabla_{ji}X^h &= R^h_{ijk}X^k = (A_{lj}A_{ik}-A_{lk}A_{ij})g^{hl}X^k,\\
\label{EQinterchangespecific}
\nabla_{ij}Y_k - \nabla_{ji}Y_k &= R_{ijkl}g^{lm}Y_m = (A_{lj}A_{ik}-A_{il}A_{jk})g^{lm}Y_m.
\end{align}
We also use for tensor fields $T$ and $S$ the notation $T*S$ (as in Hamilton \cite{H82}) to denote a linear combination
of new tensors, each formed by contracting pairs of indices from $T$ and $S$ by the metric $g$ with multiplication by a
universal constant.  The resultant tensor will have the same type as the other quantities in the expression it appears.
We denote polynomials in the iterated covariant derivatives of $T$ by
\[
P_j^i(T) = \sum_{k_1+\ldots+k_j = i} c_{ij}\nabla_{(k_1)}T*\cdots*\nabla_{(k_j)}T,
\]
where the constants $c_{ij}\in\R$ are absolute.  We use $P_0^0(T)$ to denote a constant.
As is common for the $*$-notation, we slightly abuse this constant when certain subterms do not appear in our $P$-style
terms. For example
\begin{align*}
|\nabla A|^2
    = \IP{\nabla A}{\nabla A}_g
    = 1\cdot\left(\nabla_{(1)}A*\nabla_{(1)}A\right) + 0\cdot\left(A*\nabla_{(2)}A\right)
    = P_2^2(A).
\end{align*}

The Laplacian we will use is the Laplace-Beltrami operator on $\Sigma$, with the components of $\Delta T$
given by
\[
\Delta T_{j_1\ldots j_q}^{k_1\ldots k_p} = g^{pq}\nabla_{pq}T_{j_1\ldots j_q}^{k_1\ldots k_p}  = \nabla^p\nabla_pT_{j_1\ldots j_q}^{k_1\ldots k_p}.
\]
Using the Codazzi equation with the interchange of covariant derivative formula given above, we
obtain Simons' identity:
\begin{align}
g^{kl}\nabla_{ki} A_{lj} &= g^{kl}\nabla_{ik}A_{lj} + g^{kl}g^{pq}R_{kilp}A_{qj} + g^{kl}g^{pq}R_{kijp}A_{lq}
 \notag\\
\Delta A_{ij} &= \nabla_{ik}A_{j}^k
 + g^{kl}g^{pq} (A_{pi}A_{kl}-A_{kp}A_{il}) A_{qj}
 + g^{kl}g^{pq} (A_{pi}A_{kj}-A_{kp}A_{ij}) A_{lq}
 \notag\\
              &= \nabla_{ik}A_{j}^k
 + Hg^{pq}A_{pi}A_{qj}
 - g^{kl}g^{pq} A_{kp}A_{il} A_{qj}
 + g^{kl}g^{pq} A_{pi}A_{kj} A_{lq}
 - A_{ij}\IP{A}{A}_g
 \notag\\
\label{EQsi}  &= \nabla_{ij}H + HA_{i}^kA_{kj} - |A|^2A_{ij},
\end{align}
or in $*$-notation
\[
\Delta A = \nabla_{(2)}H + A*A*A.             
\]
The interchange of covariant derivatives formula for mixed tensor fields $T$ is simple to state in $*$-notation:
\begin{equation}
\label{EQinterchangegeneral}
\nabla_{ij}T = \nabla_{ji}T + T*A*A.
\end{equation}

We now state the first variation of the Helfrich functional for ease of future reference.

\begin{lem}
Suppose $f:\Sigma\rightarrow\R^3$ is a closed immersed surface and $\phi:\Sigma\rightarrow \R^3$ is a vector field
normal along $f$.  Then
\[
\rD{}{t}\SH^{c_0}_{\lambda_1,\lambda_2}(f+t\,\phi)\Big|_{t=0}
 = \frac{1}{2}\int_\Sigma \IP{\phi}{\nu}(\Delta H + H|A^o|^2+2c_0K-(2\lambda_1+c_0^2/2)H-2\lambda_2) d\mu.
\]
In particular, if 
\[
 \phi = \frac{\partial f}{\partial t} = -\big(\Delta H + H|A^o|^2+2c_0K-(2\lambda_1+c_0^2/2)H-2\lambda_2\big)\nu
\]
then
\[
\rD{}{t}\SH^{c_0}_{\lambda_1,\lambda_2}(f)
 = -\frac{1}{2}\int_\Sigma \big|\Delta H + H|A^o|^2+2c_0K-(2\lambda_1+c_0^2/2)H-2\lambda_2\big|^2 d\mu
\]
and the one-parameter family $f:\Sigma\times[0,T)\rightarrow\R^3$ is the steepest descent $L^2$-gradient flow of
$\SH^{c_0}_{\lambda_1,\lambda_2}$.
\label{LMeveq}
\end{lem}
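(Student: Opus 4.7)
The plan is to compute the first variation term-by-term by first working out how the basic geometric quantities $g_{ij}$, $d\mu$, $H$, and $\text{Vol}\,\Sigma$ evolve under a purely normal perturbation, and then substituting into each summand of $\SH^{c_0}_{\lambda_1,\lambda_2}$. Since $\phi$ is normal along $f$, I write $\phi = \psi\nu$ with $\psi = \IP{\phi}{\nu}$ and consider the family $f_t = f + t\psi\nu$ at $t=0$.

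First I would establish the standard variational formulas (with the inward normal convention used throughout the paper). Differentiating $g_{ij} = \IP{\partial_i f_t}{\partial_j f_t}$ at $t=0$ and using $\IP{\nu}{\partial_j f}=0$ together with the definition \eqref{EQsff} yields $\dot g_{ij} = -2\psi A_{ij}$, hence $\dot g^{ij} = 2\psi A^{ij}$ and $\dot{d\mu} = -\psi H\,d\mu$. For the normal, the identities $\IP{\nu}{\nu}=1$ and $\IP{\nu}{\partial_k f}=0$ give $\dot\nu = -\nabla^k\psi\,\partial_k f$. Combining with the Gauss formula $\partial_i\partial_j f = \Gamma^k_{ij}\partial_k f + A_{ij}\nu$ produces
\[
\dot A_{ij} = \nabla_{ij}\psi - \psi A_{ik}A_j^{\ k},
\]
and then $\dot H = g^{ij}\dot A_{ij} + \dot g^{ij}A_{ij} = \Delta\psi + \psi|A|^2$. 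Finally, the enclosed-volume formula $\text{Vol}\,\Sigma = -\frac13\int_\Sigma\IP{f}{\nu}\,d\mu$ (with inward $\nu$) differentiates to $\dot{\text{Vol}} = -\int_\Sigma \psi\,d\mu$ after a short integration by parts using $\dot\nu$ and $\dot{d\mu}$.

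Next I would assemble the three contributions. Writing $h = H - c_0$, the curvature term gives
\[
\rD{}{t}\Big|_{t=0}\tfrac14\!\int_\Sigma h^2 d\mu
 = \tfrac12\!\int_\Sigma h(\Delta\psi + \psi|A|^2)\,d\mu - \tfrac14\!\int_\Sigma h^2 \psi H\,d\mu.
\]
Since $c_0$ is constant, integrating by parts on the closed surface yields $\int h\,\Delta\psi\,d\mu = \int\psi\,\Delta H\,d\mu$. The area and volume terms contribute $-\lambda_1\!\int\psi H\,d\mu$ and $-\lambda_2\!\int\psi\,d\mu$ respectively. Collecting everything shows that $\frac{d}{dt}\SH^{c_0}_{\lambda_1,\lambda_2}$ equals
\[
\tfrac12\!\int_\Sigma \psi\bigl(\Delta H + H|A|^2\bigr) d\mu
 - \tfrac{c_0}{2}\!\int_\Sigma \psi|A|^2\,d\mu
 - \tfrac14\!\int_\Sigma \psi H(H-c_0)^2\,d\mu
 - \lambda_1\!\int_\Sigma \psi H\,d\mu
 - \lambda_2\!\int_\Sigma \psi\,d\mu.
\]

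The last step is the algebraic simplification, which is where the main (though entirely routine) bookkeeping lies. Using $|A|^2 = |A^o|^2 + \tfrac12 H^2$ and $2K = H^2 - |A|^2 = \tfrac12 H^2 - |A^o|^2$, a direct expansion shows
\[
\tfrac12 H|A|^2 - \tfrac{c_0}{2}|A|^2 - \tfrac14 H(H-c_0)^2
 = \tfrac12 H|A^o|^2 + c_0 K - \tfrac{c_0^2}{4}H,
\]
which produces exactly the combination $H|A^o|^2 + 2c_0 K - (c_0^2/2)H$ inside the claimed integrand, while the remaining $-2\lambda_1 H - 2\lambda_2$ come directly from the area and volume pieces. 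This gives the first identity in the lemma. The second identity then follows by substituting the specific choice $\phi = -\bigl(\Delta H + H|A^o|^2 + 2c_0K - (2\lambda_1+c_0^2/2)H - 2\lambda_2\bigr)\nu$ into the first, so the integrand becomes the negative of a square, and this is manifestly the steepest descent $L^2$-gradient direction since any other normal velocity of the same $L^2$-norm produces a smaller (less negative) derivative by Cauchy--Schwarz. The main obstacle is just keeping the sign conventions for the inward normal consistent across the metric, $H$, and volume variations; once those are fixed the algebra in the final step is short.
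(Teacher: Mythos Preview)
Your proof is correct. The paper itself does not give an independent argument here: it simply cites \cite[Lemma 2.1]{WM12helclass} for the first variation identity and then remarks that the remaining statements follow from the definition of the $L^2$-gradient. Your self-contained derivation via the standard variation formulas for $g_{ij}$, $d\mu$, $H$, and $\text{Vol}\,\Sigma$ (which coincide with those recorded in Lemma~\ref{LMappevo}) together with the algebraic reduction using $|A|^2 = |A^o|^2 + \tfrac12 H^2$ and $K = \tfrac14 H^2 - \tfrac12|A^o|^2$ is exactly the computation that reference would supply, so your argument is in fact more complete than what appears in the paper.
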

\begin{proof}
For the proof of the first statement see \cite[Lemma 2.1]{WM12helclass}.
The remaining statements follow from the definition of the $L^2$-gradient.
\end{proof}


\section{Parabolic regularity}

In this section we first prove that, analagous to the cases of Willmore flow, surface diffusion flow and the constrained
variants thereof \cite{KS02,MWW10csdlt,W10slt,W10sd}, so long as the concentration of curvature remains well-controlled
the flow continues to exist smoothly.
This statement not only holds for $c_0 \ne 0$, but also more generally for flows $f:\Sigma\times[0,T)\rightarrow\R^3$ of
the form
\begin{equation}
\label{E:theflow}
  \frac{\partial f}{\partial t} = -\left( \Delta H + \sum_{\alpha=0}^3 P_\alpha^0(A) \right)\nu.
\end{equation}
The speed $F$ is a second order elliptic differential operator on the Weingarten map $D\nu$, that is, a fourth order
differential operator on $f$.

Our main result in this section is the following concentration-compactness alternative for the class of flows
\eqref{E:theflow}.

\begin{thm}
\label{Tlt}
Let $f: \Sigma \rightarrow \R^3$ be a smooth immersion.
There are absolute constants $\varepsilon_0 > 0$ and $c_0 < \infty$ such that if $\rho > 0$ is chosen with
\begin{equation} \label{E:localAcond}
  \left. \int_{f^{-1}\left( B_{\rho}\left( x \right)\right)} \left| A \right|^{2} d\mu \right|_{t=0} \leq \varepsilon <
\varepsilon_{0}
\end{equation}
for any $x\in \R^3$, then the maximal time $T$ of existence of the flow \eqref{E:theflow} satisfies
\[
T \geq \frac{1}{c_0} \rho^{4} \mbox{,}
\]
and for $0 \leq t \leq \frac{1}{c_0} \rho^{4}$,
\[
\int_{f^{-1}\left( B_{\rho}\left( x \right)\right)} \left| A \right|^{2} d\mu \leq c_0\, \varepsilon_0 \mbox{.}
\]
\end{thm}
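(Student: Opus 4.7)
The plan is to adapt the Kuwert--Sch\"atzle localisation scheme \cite{KS02}, which is by now the standard route to concentration-compactness alternatives for higher-order geometric flows (compare \cite{MWW10csdlt,W10slt,W10sd}). The basic object of study is
\[
E_\rho(t) = \int_\Sigma |A|^2\, \gamma^s\, d\mu,
\]
where $\gamma = \phi\circ f$ with $\phi\in C_c^\infty(\R^3)$ chosen to satisfy $\phi\equiv 1$ on $B_\rho(x)$, $\operatorname{supp}\phi\subset B_{2\rho}(x)$, and $|D^k\phi|\le c\rho^{-k}$, and with $s$ a fixed integer (typically $s\ge 4$) large enough to absorb boundary contributions from four integrations by parts. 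The first step is to differentiate $E_\rho$ in time and use the evolution equation \eqref{E:theflow} together with Simons' identity \eqref{EQsi} and the gradient-control relation \eqref{EQbasicgradHgradAo2} to extract the principal term. The speed is $F = \Delta H + \sum_\alpha P_\alpha^0(A)$, and the ensuing reaction--diffusion evolution of $A$ leads, after integration by parts against $\gamma^s$, to a formal identity of the type
\[
\frac{d}{dt}\, E_\rho(t) + 2\int |\nabla_{(2)}A|^2\,\gamma^s\, d\mu
 = \int \left(P_3^0(A)\,\nabla_{(2)}A + P_5^0(A) + \text{derivatives of }\gamma\right) \gamma^{s-k}\, d\mu,
\]
where all remaining terms are polynomial in $A$ and its derivatives and have either a small-energy factor or a power of $\rho^{-1}$ for each derivative of $\phi$ that falls.

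The second step is to absorb the right-hand side. Here the Michael--Simon Sobolev inequality \cite{MS73}, applied intrinsically on $\Sigma$, together with Young's inequality, converts products $|A|^{2k}\gamma^{s-k}$ into terms of the form
\[
\Big(\textstyle\int_{\operatorname{supp}\gamma} |A|^2\, d\mu\Big)\cdot \int|\nabla_{(2)} A|^2\, \gamma^s\, d\mu + (\text{cutoff remainders scaled by }\rho^{-4}).
\]
Under the standing hypothesis \eqref{E:localAcond}, the coefficient $\int_{\operatorname{supp}\gamma}|A|^2 d\mu$ is small (at least initially), so the first factor above can be absorbed into the $\int|\nabla_{(2)}A|^2\gamma^s$ on the left, leaving a clean differential inequality
\[
\frac{d}{dt}\, E_\rho(t) \le C\,\rho^{-4}\,\Big(E_{2\rho}(t) + \varepsilon\Big),
\]
valid so long as $\int_{f^{-1}(B_{2\rho}(x))}|A|^2 d\mu \le \varepsilon_0$ for every $x$.

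The third step is the continuation argument. Set
\[
t^* = \sup\Big\{\tau\in[0,T): \sup_{x\in\R^3}\int_{f^{-1}(B_\rho(x))} |A|^2\, d\mu \le c_0\varepsilon_0 \text{ on }[0,\tau]\Big\}.
\]
Integrating the differential inequality on $[0,t^*]$ using \eqref{E:localAcond} and taking the supremum over $x$ gives $E_\rho(t) \le \varepsilon + C t\rho^{-4}$. Choosing $c_0$ large (to be fixed against the universal constants $C$) shows $t^* \ge c_0^{-1}\rho^4$, since otherwise the localised curvature norm would be strictly less than $c_0\varepsilon_0$ at $t = t^*$, contradicting maximality. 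To conclude $T \ge c_0^{-1}\rho^4$ one uses the bound on $\int_{B_\rho} |A|^2 d\mu$ together with parallel (but more elaborate) estimates on $\int |\nabla_{(k)}A|^2 \gamma^{s_k}d\mu$ for each $k\ge 1$, derived by the same cutoff-bootstrap, which yield $C^\infty$ bounds on curvature away from the singularity and thus allow the local existence theory (Amann's quasilinear framework behind Theorem \ref{TMste}) to continue the flow.

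The principal obstacle is bookkeeping: each integration by parts distributes derivatives between $A$, $F$, $\nu$ and $\gamma$, generating a large family of terms of the form $P_j^i(A)*\nabla_{(k)}\gamma$ with many values of $i,j,k$. One must verify uniformly that every such term either carries a factor of $\rho^{-4}$ (so can be bundled into the $C\rho^{-4}$ term) or is of sufficiently high polynomial order in $A$ to be absorbed by the Michael--Simon--Young estimate once the smallness hypothesis is invoked. Keeping the power $s$ of the cutoff high enough to avoid the ``derivative falls on a non-positive power of $\gamma$'' pathology, while keeping the constants absolute, is the delicate part of the calculation.
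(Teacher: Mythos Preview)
Your proposal is correct and follows essentially the same route as the paper: the paper derives the localised energy inequality (Lemma~\ref{T:c1}) via exactly the cutoff-and-integrate-by-parts scheme with Michael--Simon absorption that you outline, bootstraps to $L^\infty$ control on all $\nabla_{(k)}A$ (Lemmas~\ref{T:c2}--\ref{T:c3}), and then runs the same continuation/contradiction argument against the maximality of $T$ using Hamilton's equivalent-metrics lemma. The only step you leave implicit is the covering argument relating the scale-$2\rho$ support of $\gamma$ back to the scale-$\rho$ supremum defining $t^*$, which the paper handles via the elementary observation $\eta(t)\le c_\eta\sup_x\int_{f^{-1}(B_{1/2}(x))}|A|^2\,d\mu$ after rescaling to $\rho=1$.
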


\begin{rmk}
It is possible to weaken the regularity requirement on the initial data by exploiting the instantaneous smoothing
property \cite{A10smoothing} of the flow.  In the proof of Theorem \ref{Tlt}, smoothness of the initial data is only
needed to bound the derivatives of curvature at final time. However for this argument we may use in place of the initial
data the immersion at any earlier time: in particular $f_\delta\big(\cdot\big) = f\big(\cdot,\delta\big)$, $\delta \in
(0,T)$, which is smooth (Theorem \ref{T:shorttime}).
\end{rmk}

In the proof of Theorem \ref{Tlt} we shall use local coordinate notation as well as the $*$- and $P$-style notation
introduced in Section 2, which is most convenient for our computations, as for example in \cite{H82}.  We briefly note
that the more general flow \eqref{E:theflow} also enjoys local existence.  The proof is an essentially identical (to
that found in \cite{KN06hf}) verification that the general existence theory of Amann \cite{A86,A93,Abook,A05} applies.
Note again that the uniquness statement below is understood modulo the natural invariances of \eqref{E:theflow}, which
includes at least the family of diffeomorphisms tangential along $f$.

We note that the initial regularity required by Theorem \ref{T:shorttime} below is not optimal.

\begin{thm} \label{T:shorttime}
For any $C^{4,\alpha}$ initial immersion $f_{0} : \Sigma \rightarrow \R^3$, there exists a unique solution
$f:\Sigma\times[0,T)\rightarrow\R^3$ to the flow \eqref{E:theflow} on a maximal time interval $[0,T)$ with initial value
$f_0$ and for which $f_t(\cdot) := f(\cdot,t)$ is smooth for every $t\in(0,T)$.
\end{thm}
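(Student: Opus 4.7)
The plan is to recast the evolution equation \eqref{E:theflow} as a scalar quasilinear parabolic equation on the fixed reference manifold $\Sigma$, verify strong parabolicity of its linearization, and then appeal to the abstract existence theory of Amann \cite{A86,A93,Abook,A05} exactly as is done in \cite{KN06hf} for the Helfrich flow. First I would fix the initial immersion $f_0\in C^{4,\alpha}(\Sigma,\R^3)$ as a reference and represent any sufficiently nearby immersion as a normal graph
\[
f(p,t) = f_0(p) + u(p,t)\,\nu_0(p), \qquad u(\cdot,0)=0,
\]
over $f_0$, where $\nu_0$ is the unit normal induced by $f_0$. The induced metric $g[u]$, second fundamental form $A[u]$, and the quantities $H[u]$, $|A^o[u]|^2$, $K[u]$ are smooth algebraic/differential functions of $u$ and its spatial derivatives up to order two, so that the normal component of the right hand side of \eqref{E:theflow} becomes a quasilinear fourth order differential operator $\mathcal{F}(u)$ acting on $u$.

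Next I would compute the principal symbol. The leading behaviour of $-\Delta H$ in $u$ is $-\Delta_{g_0}^2 u$, while every summand $P_\alpha^0(A)$ with $\alpha \le 3$ contributes terms of strictly lower order in $u$. Hence the linearization of the full operator $\mathcal{A}(u)u = \Delta H + \sum_\alpha P_\alpha^0(A)$ about $u\equiv 0$ has principal part equal to the positive bilaplacian $\Delta_{g_0}^2$. This is a strongly elliptic scalar operator of order four on the closed surface $\Sigma$, which is precisely the structural hypothesis required to apply the abstract quasilinear parabolic theory of Amann in H\"older spaces.

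With strong parabolicity established, the abstract equation
\[
\partial_t u + \mathcal{A}(u)u \;=\; \mathcal{F}(u), \qquad u(\cdot,0)=0,
\]
falls into the class of equations treated in \cite{A86,A93,Abook,A05}, verbatim as in \cite{KN06hf} once the extra polynomial terms $P_\alpha^0(A)$ are absorbed into the lower order part of $\mathcal{F}$. This yields a unique maximal solution $u\in C^{4,\alpha}(\Sigma\times[0,T))$, and thus an immersed solution $f$ to \eqref{E:theflow}. Uniqueness at the level of immersions modulo the natural tangential diffeomorphism group follows because any other $C^{4,\alpha}$ solution can, after composition with a small tangential diffeomorphism, be written as a normal graph over $f_0$ on a short time interval, where the scalar uniqueness applies.

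Finally, to upgrade from $C^{4,\alpha}$ regularity to smoothness for $t>0$, I would invoke standard parabolic bootstrapping: once $u(\cdot,t_0)\in C^{4,\alpha}$ for some $t_0\in(0,T)$, the coefficients of the linearized problem inherit that regularity, and iterated Schauder estimates for strongly parabolic fourth order scalar equations improve the H\"older regularity of $u$ by one derivative at each step, producing $u\in C^\infty(\Sigma\times(0,T))$. Alternatively one may directly cite the instantaneous smoothing result \cite{A10smoothing} already referred to in the remark after Theorem \ref{Tlt}. The only genuine obstacle is the degeneracy of the full geometric system in $f$ induced by diffeomorphism invariance, which would otherwise fail to satisfy Amann's parabolicity assumption; the normal graph ansatz removes this degeneracy precisely by fixing a geometric gauge, at the unavoidable cost of having to restrict to short times so that the graph representation is valid.
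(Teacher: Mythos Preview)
Your proposal is correct and follows precisely the approach the paper indicates: the paper does not give a detailed proof of this theorem but simply remarks that ``the proof is an essentially identical (to that found in \cite{KN06hf}) verification that the general existence theory of Amann \cite{A86,A93,Abook,A05} applies,'' and you have fleshed out exactly this verification via the normal-graph gauge, the principal symbol computation, and the smoothing argument.
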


The following evolution equations follow from straightforward computations.
Their derivations in a slightly more general setting can be found in Lemma \ref{LMappevo} and Lemma \ref{LMappevoiter}.

\begin{lem} \label{T:evlneqns}
Under the flow \eqref{E:theflow} we have the following evolution equations for various geometric quantities associated
with $f$:
\begin{align*}
\frac{\partial}{\partial t} g_{ij} &=  2 F A_{ij}
\qquad
\frac{\partial}{\partial t} g^{ij} = -2 F A^{ij}
\qquad
\frac{\partial}{\partial t} \nu = - \nabla F
\\
\frac{\partial}{\partial t} \Gamma^{i}_{jk} &= \nabla F * A + FP_1^1(A)
\qquad
\frac{d}{dt} d \mu = - H F d\mu
\\
\frac{\partial}{\partial t} A_{ij} &= - \Delta^{2} A_{ij} + \sum_{\alpha=1}^{3} P_{\alpha}^{2}\left( A \right)+  \sum_{\alpha=2}^{5} P_{\alpha}^{0}\left( A \right)
\\
\frac{\partial}{\partial t} \nabla_{(m)} A_{ij}
 &= - \Delta^{2} \nabla_{(m)} A_{ij}
    + \sum_{\alpha=1}^{3} P_{\alpha}^{m+2}\left( A \right)
    + \sum_{\alpha=2}^{5} P_{\alpha}^{m}\left( A \right).
\end{align*}
\end{lem}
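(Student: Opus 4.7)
The plan is to derive each evolution equation directly from the normal flow $\partial_t f = -F\nu$ with $F = \Delta H + \sum_{\alpha=0}^3 P_\alpha^0(A)$, combined with the Weingarten relation $\nabla_i\nu = -A_i^k\,\partial_k f$ and the defining identities $g_{ij}=\IP{\partial_i f}{\partial_j f}$, $A_{ij}=\IP{\partial^2_{ij}f}{\nu}$, and $d\mu=\sqrt{\det g}\,d\SH^3$. Differentiating $g_{ij}$ in time and substituting $\partial_t f=-F\nu$, the normal component drops against $\partial_j f$ while Weingarten contributes $FA_{ij}$, yielding $\partial_t g_{ij}=2FA_{ij}$; raising indices gives $\partial_t g^{ij}$, and $\partial_t d\mu=\tfrac{1}{2}g^{ij}(\partial_t g_{ij})\,d\mu$ gives the area form variation. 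The normal evolution follows by differentiating $\IP{\nu}{\nu}=1$ and $\IP{\nu}{\partial_j f}=0$ in time: the first forces $\partial_t\nu$ tangential, the second pins down its components as tangential derivatives of $F$.

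For the Christoffel symbols I would apply the standard variational formula
\[
\partial_t\Gamma^i_{jk} = \tfrac{1}{2}g^{il}\bigl(\nabla_j\partial_t g_{kl}+\nabla_k\partial_t g_{jl}-\nabla_l\partial_t g_{jk}\bigr),
\]
substitute $\partial_t g_{ij}=2FA_{ij}$, and split by Leibniz into a piece with a derivative on $F$ (the $\nabla F*A$ term) and a piece with a derivative on $A$. The latter is a $P_1^1(A)$ term after using \eqref{EQbasicgradHgradAo} and \eqref{EQbasicgradHgradAo2} to control covariant derivatives of $A$ by derivatives of $A^o$.

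The heart of the lemma is the evolution of $A_{ij}$. Differentiating $A_{ij}=\IP{\partial^2_{ij}f}{\nu}$ and commuting $\partial_t$ past the spatial derivatives, while using the $\partial_t\nu$ formula from above, produces the standard normal-motion identity $\partial_t A_{ij}=-\nabla_{ij}F+FA_i^k A_{kj}$. Expanding $F$, the quadratic term $FA*A$ distributes into $\sum_{\alpha=2}^5 P_\alpha^0(A)$ plus a $P_3^2(A)$ piece coming from the $\Delta H$ factor; the lower-order terms $\nabla_{ij}P_\alpha^0(A)$ produce $P_\alpha^2(A)$ for $\alpha=1,2,3$ by Leibniz. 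The delicate step is $\nabla_{ij}\Delta H$: I would commute the two covariant derivatives past the Laplacian via \eqref{EQinterchangegeneral}, picking up $A*A*\nabla_{(2)}H$ corrections, and then invoke Simons' identity \eqref{EQsi} to replace $\nabla_{(2)}H$ by $\Delta A$ modulo $A*A*A$. This produces the leading $\Delta^2 A_{ij}$ together with further $P_3^2(A)$ and $P_5^0(A)$ remainders, all of which merge into the two sums in the stated form.

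The iterated equation follows by induction on $m$: applying $\nabla_{(m)}$ to the evolution of $A_{ij}$ and commuting with $\partial_t$ generates extra terms from the Christoffel variation computed above, each of which is already $P$-style, while commuting $\nabla_{(m)}$ past $\Delta^2$ via repeated use of \eqref{EQinterchangegeneral} produces curvature corrections with the correct total derivative count. The main obstacle is bookkeeping: checking that after all expansions the $P$-polynomial indices stay within $\alpha\in[1,3]$ (respectively $[2,5]$) and the derivative counts remain $m+2$ (respectively $m$). Since exactly this structure is verified in the slightly more general setting of Lemma \ref{LMappevo} and Lemma \ref{LMappevoiter}, I would cite those rather than perform the induction term by term.
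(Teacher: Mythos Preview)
Your proposal is correct and follows essentially the same route as the paper: the paper's proof of this lemma consists entirely of a pointer to Lemma~\ref{LMappevo} and Lemma~\ref{LMappevoiter} in the appendix, and your sketch reproduces exactly those computations (normal-flow identities for $g_{ij}$, $g^{ij}$, $\nu$, $d\mu$, $\Gamma^i_{jk}$, $A_{ij}$, then commuting $\nabla_{ij}$ past $\Delta$ and invoking Simons' identity to extract $\Delta^2 A_{ij}$, then the induction for $\nabla_{(m)}A$). One minor remark: your appeal to \eqref{EQbasicgradHgradAo} and \eqref{EQbasicgradHgradAo2} in the Christoffel step is unnecessary, since the $F\,\nabla A$ piece is already literally of the form $F\,P_1^1(A)$ without any conversion to $A^o$.
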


\begin{cor} \label{T:evlncor}
Under the flow \eqref{E:theflow},
\begin{multline*}
  \frac{\partial}{\partial t} \left| \nabla_{(m)} A \right|^{2} = - 2 \nabla^{\beta} \nabla^{\alpha} \nabla_{\alpha} \nabla_{\beta}  \nabla_{i_{1}} \cdots \nabla_{i_{m}} A_{kl} \nabla^{i_{1}} \cdots \nabla^{i_{m}} A^{kl} \\
  + \left[ \sum_{\alpha=1}^{3} P_{\alpha}^{m+2}\left( A \right)+  \sum_{\alpha=2}^{5} P_{\alpha}^{m}\left( A \right)
\right] * \nabla_{(m)}A \mbox{.}
\end{multline*}
\end{cor}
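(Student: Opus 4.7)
The plan is to differentiate the pointwise identity
\[
|\nabla_{(m)}A|^2 = g^{i_1 j_1}\cdots g^{i_m j_m}g^{kp}g^{lq}\,\nabla_{i_1\cdots i_m}A_{kl}\,\nabla_{j_1\cdots j_m}A_{pq}
\]
with respect to $t$ using the Leibniz rule, and then substitute the evolution equations in Lemma \ref{T:evlneqns}. The computation naturally splits into two pieces: the pairing $2\langle\partial_t\nabla_{(m)}A,\nabla_{(m)}A\rangle$ and the $2m+2$ contributions from $\partial_t g^{ij}=-2FA^{ij}$.

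For the first piece, Lemma \ref{T:evlneqns} gives
\[
\tfrac{\partial}{\partial t}\nabla_{(m)}A_{ij} = -\Delta^2\nabla_{(m)}A_{ij}+\sum_{\alpha=1}^{3}P_\alpha^{m+2}(A)+\sum_{\alpha=2}^{5}P_\alpha^{m}(A),
\]
so its inner product with $\nabla_{(m)}A^{ij}$ yields the principal term $-2\Delta^2\nabla_{(m)}A_{ij}\cdot\nabla_{(m)}A^{ij}$ plus exactly the error displayed in the statement, modulo one rewriting of the principal term. To rewrite $\Delta^2$ in the specific order $\nabla^\beta\nabla^\alpha\nabla_\alpha\nabla_\beta$, I would apply the interchange formula \eqref{EQinterchangegeneral} twice: swapping the two outer covariant derivatives introduces a commutator of the form $A*A*\nabla_{(m+1)}A$, which falls into the $\alpha=3$ summand of $\sum_{\alpha=1}^{3}P_\alpha^{m+2}(A)$ and may be absorbed into the stated error after pairing with $\nabla_{(m)}A$.

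For the second piece, each factor of $\partial_t g^{ij}$ produces a term of the schematic form $F*A*\nabla_{(m)}A*\nabla_{(m)}A$. Expanding $F=\Delta H+\sum_{\alpha=0}^{3}P_\alpha^{0}(A)$ and using Codazzi \eqref{EQbasicgradHgradAo} to identify $\Delta H$ as an element of $P_1^{2}(A)$, we have $F*A = P_2^{2}(A)+\sum_{\alpha=1}^{4}P_\alpha^{0}(A)$. Contracting with one further factor of $\nabla_{(m)}A=P_1^{m}(A)$ on one side and pulling out the remaining $\nabla_{(m)}A$ explicitly, these metric contributions sit inside $[P_3^{m+2}(A)+\sum_{\alpha=2}^{5}P_\alpha^{m}(A)]*\nabla_{(m)}A$ and thus also fit into the stated range.

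The only real obstacle is bookkeeping: verifying that every commutator correction and every metric derivative contribution lands in the stated two families $P_\alpha^{m+2}$ with $1\le\alpha\le 3$ and $P_\alpha^{m}$ with $2\le\alpha\le 5$, without producing a term of type $P_0^{\,\cdot}$ or exceeding degree five in $A$. No single substitution is difficult; the care is in tallying derivative orders and factor counts consistently. The tallies above, combined with Lemma \ref{T:evlneqns}, immediately produce the identity in the statement.
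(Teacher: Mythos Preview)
Your proposal is correct and follows essentially the same approach as the paper: the paper's proof is the single sentence ``direct computation using Lemma~\ref{T:evlneqns} and \eqref{EQinterchangegeneral} as in the proof of Lemma~\ref{LMappevoiter},'' which is exactly the Leibniz--rule computation you carry out, with commutator corrections absorbed via the interchange formula. One small imprecision: the commutator produced when rewriting $\Delta^2\nabla_{(m)}A$ as $\nabla^\beta\nabla^\alpha\nabla_\alpha\nabla_\beta\nabla_{(m)}A$ is not literally $A*A*\nabla_{(m+1)}A$ (which would be $P_3^{m+1}$) but a sum of terms like $A*A*\nabla_{(m+2)}A$ and $\nabla A*A*\nabla_{(m+1)}A$, all of which are genuinely in $P_3^{m+2}(A)$ as you then correctly assert.
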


\begin{proof}
This follows by direct computation using Lemma \ref{T:evlneqns} and \eqref{EQinterchangegeneral} as in the proof of
Lemma \ref{LMappevoiter}.
\end{proof}

We now establish energy estimates for the flow.

\begin{lem} \label{T:e1}
Let $\eta: \Sigma \times [0,T] \rightarrow \R$ be a $C^{2}$ function.
While a solution to the flow \eqref{E:theflow} exists,
\begin{align*}
  \rD{}{t} \int_\Sigma \eta \left| \nabla_{(m)} A \right|^{2} d\mu = &- \int_\Sigma  \eta H \Delta H \left| \nabla_{(m)} A
\right|^{2} d\mu + \int_\Sigma  \frac{\partial \eta}{\partial t}  \left| \nabla_{(m)} A \right|^{2} d\mu \\
  & - 2\int_\Sigma \nabla_{\alpha} \nabla_{\beta} \nabla_{i_{1}} \cdots \nabla_{i_{m}} A_{kl} \nabla^{\alpha} \nabla^{\beta} \left( \eta \nabla^{i_{1}} \cdots \nabla^{i_{m}} A^{kl} \right) d\mu \\
  & + \int_\Sigma \eta \left[ \sum_{\alpha=1}^{3} P_{\alpha}^{m+2}\left( A \right)+  \sum_{\alpha=2}^{5}
P_{\alpha}^{m}\left( A \right) \right] * \nabla_{(m)} A \, d\mu \mbox{.}
\end{align*}
\end{lem}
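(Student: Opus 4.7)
The plan is to differentiate under the integral sign, feeding in the evolution equations of Lemma \ref{T:evlneqns} together with Corollary \ref{T:evlncor}, and then to integrate the resulting leading-order term by parts over the closed surface $\Sigma$. The product rule produces three pieces:
\begin{align*}
\rD{}{t}\int_\Sigma \eta\,|\nabla_{(m)}A|^2\, d\mu
 &= \int_\Sigma \pD{\eta}{t}|\nabla_{(m)}A|^2\, d\mu
  + \int_\Sigma \eta\,\pD{}{t}|\nabla_{(m)}A|^2\, d\mu\\
 &\quad + \int_\Sigma \eta\,|\nabla_{(m)}A|^2\, \rD{}{t}d\mu,
\end{align*}
which I would treat separately. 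The first piece is already the second term of the lemma.

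For the $d\mu$-derivative, Lemma \ref{T:evlneqns} gives $\rD{}{t}d\mu = -HF\,d\mu$ with $F = \Delta H + \sum_{\alpha=0}^{3}P_\alpha^0(A)$ prescribed by \eqref{E:theflow}. The principal contribution $-\eta H\Delta H\,|\nabla_{(m)}A|^2$ supplies exactly the first declared term, while the residue $-\eta H \sum_{\alpha=0}^{3}P_\alpha^0(A)\,|\nabla_{(m)}A|^2$ is absorbed into the error schema: regrouping $|\nabla_{(m)}A|^2 = \nabla_{(m)}A*\nabla_{(m)}A$, the factor $H\cdot P_\alpha^0(A)\cdot \nabla_{(m)}A$ is of type $P_{\alpha+2}^m(A)$ with $\alpha+2\in\{2,\ldots,5\}$, which lies in the $\sum_{\alpha=2}^{5} P_\alpha^m(A)*\nabla_{(m)}A$ term.

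For the middle piece I would substitute Corollary \ref{T:evlncor}. Its $P$-style part, multiplied by $\eta$, lands directly in the final declared term. The genuinely highest-order piece is
\[
-2\int_\Sigma \eta\,\nabla^\beta\nabla^\alpha\nabla_\alpha\nabla_\beta\nabla_{i_1}\cdots\nabla_{i_m}A_{kl}\cdot \nabla^{i_1}\cdots\nabla^{i_m}A^{kl}\, d\mu,
\]
and since $\Sigma$ is closed, two successive integrations by parts — peeling $\nabla^\beta$ off first and $\nabla^\alpha$ next — transfer both derivatives onto the product $\eta\,\nabla^{i_1}\cdots\nabla^{i_m}A^{kl}$ with no boundary contribution, yielding the third declared term.

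The main obstacle is bookkeeping rather than analysis: I must verify that every residue arising from the $-HF$ factor and from the $P$-portion of Corollary \ref{T:evlncor} fits inside the prescribed schema $[\sum_{\alpha=1}^{3}P_\alpha^{m+2}(A)+\sum_{\alpha=2}^{5}P_\alpha^{m}(A)]*\nabla_{(m)}A$, with the indicated ranges of $\alpha$ and the correct derivative count. The key mechanism is that $|\nabla_{(m)}A|^2$ always detaches as one free copy of $\nabla_{(m)}A$ times another, so any accompanying factor $P_\beta^k(A)$ merely shifts subscripts in a predictable way; a routine count confirms that no term escapes the declared ranges, and the four assembled pieces reproduce exactly the identity in the lemma.
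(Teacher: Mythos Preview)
Your argument is correct and matches the paper's own proof, which is simply stated as ``differentiating using Lemma \ref{T:evlneqns} and Corollary \ref{T:evlncor} and applying the divergence theorem.'' You have supplied exactly these ingredients with the product rule split, the separation of the $-H\Delta H$ piece from the $-HF$ term, the direct insertion of Corollary \ref{T:evlncor}, and the two integrations by parts on the closed surface; the bookkeeping for the $P$-terms is accurate.
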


\begin{proof}
The Lemma follows by differentiating using Lemma \ref{T:evlneqns} and Corollary \ref{T:evlncor} and applying the
divergence theorem.
\end{proof}

We shall further specialise by setting $\eta$ to be a smooth cutoff function on the inverse image under $f$ of balls
from $\R^3$.

\begin{defn}
Set $\gamma=\tilde{\gamma}\circ f:\Sigma\rightarrow[0,1]$,
$\tilde{\gamma} \in C^2_c(\R^3)$ satisfying
\begin{equation}
\vn{\nabla\gamma}_\infty \le c_{\gamma},\quad
\vn{\nabla_{(2)}\gamma}_\infty \le c_{\gamma}(c_\gamma+|A|),
\label{Egamma}
\tag{$\gamma$}
\end{equation}
for some absolute constant $c_{\gamma} < \infty$.
\end{defn}

\begin{lem} \label{T:e2}
Suppose $\eta= \gamma^{s}$ where $\gamma$ is as in \eqref{Egamma}, $s\geq 4$ and $\theta > 0$.  While a solution to the flow \eqref{E:theflow} exists,
\begin{align*}
  &\frac{d}{dt} \int_\Sigma  \left| \nabla_{(m)} A \right|^{2} \gamma^{s} d\mu + \left( 2 - \theta \right) \int_\Sigma  \left|
\nabla_{(m+2)} A \right|^{2} \gamma^{s} d\mu \\
  & \quad \leq s \int_\Sigma \left| \nabla_{(m)} A \right|^{2} \gamma^{s-1} \frac{\partial \gamma}{\partial t} d\mu 
  +C \int_\Sigma \left| \nabla_{(m)} A \right|^{2} \gamma^{s-4} \left( \left| \nabla \gamma \right|^{4} + \gamma^{2}
\left| \nabla_{(2)} \gamma \right|^{2} \right) d\mu \\
  &\qquad + \int_\Sigma \gamma^{s} \left[ \sum_{\alpha=1}^{3} P_{\alpha}^{m+2}\left( A \right)+  \sum_{\alpha=2}^{5}
P_{\alpha}^{m}\left( A \right) \right] * \, \nabla_{(m)} A \, d\mu \mbox{,}
\end{align*}
where $C$ is a constant depending only on $\theta$ and $s$.
\end{lem}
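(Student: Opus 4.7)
The plan is to substitute $\eta = \gamma^s$ into Lemma~\ref{T:e1} and control each of the four resulting terms. Writing $T := \nabla_{(m)} A$ for brevity, the $\partial_t \eta$ contribution becomes exactly $s \int_\Sigma \gamma^{s-1} (\partial_t \gamma) |T|^2 \, d\mu$, matching the first error term in the statement. The term $-\int_\Sigma \eta \, H \Delta H \, |T|^2 \, d\mu$ is absorbed into the final $P$-style sum: since $H \Delta H = A * \nabla_{(2)} A$, the expression $H \Delta H \, |T|^2$ is schematically of the form $P_3^{m+2}(A) * \nabla_{(m)} A$, which is already counted in $\sum_{\alpha=1}^{3} P_{\alpha}^{m+2}(A) * \nabla_{(m)} A$. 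The $*$-sum from Lemma~\ref{T:e1} is then carried through unchanged.

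The heart of the proof is the fourth-order term
\[
J := -2 \int_\Sigma \nabla_{\alpha} \nabla_{\beta} T \, \nabla^{\alpha} \nabla^{\beta}\bigl(\gamma^s T\bigr) \, d\mu.
\]
Expanding via the Leibniz rule gives
\[
\nabla^{\alpha}\nabla^{\beta}(\gamma^s T) = \gamma^s \nabla^{\alpha}\nabla^{\beta} T + s\gamma^{s-1}\bigl(\nabla^{\alpha}\gamma \nabla^{\beta} T + \nabla^{\beta}\gamma \nabla^{\alpha} T\bigr) + \bigl[s\gamma^{s-1}\nabla^{\alpha}\nabla^{\beta}\gamma + s(s-1)\gamma^{s-2}\nabla^{\alpha}\gamma \nabla^{\beta}\gamma\bigr] T.
\]
The leading piece contributes the desired $-2 \int_\Sigma \gamma^s |\nabla_{(m+2)} A|^2 \, d\mu$ on the left-hand side. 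Each of the four remaining cross pieces pairs $\nabla_\alpha \nabla_\beta T$ against one or two derivatives of $\gamma$ multiplied by $T$ or $\nabla T$; Young's inequality applied to each reduces them to a small controllable multiple of $\int_\Sigma \gamma^s |\nabla_{(m+2)} A|^2 \, d\mu$ plus remainder integrals of the two shapes $\int \gamma^{s-2}|\nabla\gamma|^2 |\nabla T|^2 \, d\mu$ and $\int \gamma^{s-4}\bigl(|\nabla\gamma|^4 + \gamma^2 |\nabla_{(2)}\gamma|^2\bigr)|T|^2 \, d\mu$.

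The main obstacle is the intermediate integral $\int_\Sigma \gamma^{s-2}|\nabla\gamma|^2 |\nabla T|^2 \, d\mu$, which contains $|\nabla_{(m+1)}A|^2$ and does not appear in the target bound. I plan to eliminate it by integrating one derivative off a $\nabla T$ factor; the three resulting pieces are of the schematic forms $\gamma^{s-3}|\nabla\gamma|^3 |\nabla T| |T|$, $\gamma^{s-2}|\nabla\gamma||\nabla_{(2)}\gamma| |\nabla T| |T|$, and $\gamma^{s-2}|\nabla\gamma|^2 |\Delta T| |T|$. A second Young step on the first two returns a small multiple of the original $\int \gamma^{s-2}|\nabla\gamma|^2|\nabla T|^2\, d\mu$ (absorbed by iteration) plus the desired $\int \gamma^{s-4}\bigl(|\nabla\gamma|^4 + \gamma^2|\nabla_{(2)}\gamma|^2\bigr)|T|^2 \, d\mu$ error, while Young applied to the third returns a small multiple of $\int \gamma^s|\nabla_{(m+2)}A|^2 \, d\mu$ plus the same error. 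Choosing all Young parameters small enough that the aggregate coefficient absorbed into $\int \gamma^s |\nabla_{(m+2)} A|^2 \, d\mu$ is at most $\theta$, and tracking constants through the $s$-dependent prefactors, closes the estimate with $C = C(s,\theta)$.
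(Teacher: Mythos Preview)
Your argument is correct and is exactly what the paper's one-line proof (``divergence theorem and Cauchy's inequality'') is abbreviating: expand $\nabla^\alpha\nabla^\beta(\gamma^s T)$ by Leibniz, absorb the cross terms with Young, and kill the intermediate $\int \gamma^{s-2}|\nabla\gamma|^2|\nabla_{(m+1)}A|^2\,d\mu$ by one further integration by parts followed by absorption. The identification of $-\int \gamma^s H\Delta H\,|\nabla_{(m)}A|^2\,d\mu$ as a $P_3^{m+2}(A)*\nabla_{(m)}A$ contribution is also correct, so nothing is missing.
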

\begin{proof}
This follows from Lemma \ref{T:e1} using the divergence theorem and Cauchy's inequality $ab\leq \delta a^{2} +
\frac{1}{4\delta} b^{2}$.
\end{proof}

\begin{lem} \label{T:e3}
Suppose $\gamma$ is as in \eqref{Egamma}, $s\geq 2m+4$ and $\theta>0$.  While a solution to the flow \eqref{E:theflow} exists, we have
\begin{multline*}
  \frac{d}{dt} \int_\Sigma  \left| \nabla_{(m)} A \right|^{2} \gamma^{s} d\mu + \left( 2- \theta \right) \int_\Sigma  \left|
\nabla_{(m+2)} A \right|^{2} \gamma^{s} d\mu \\
\quad  \leq  C\int_\Sigma \left[ \sum_{\alpha=1}^{3} P_{\alpha}^{m+2}\left( A \right)+  \sum_{\alpha=2}^{5}
P_{\alpha}^{m}\left( A \right) \right] * \nabla_{(m)}A\ \gamma^{s} d\mu  + C\int_{\left[ \gamma>0\right]} \left| A \right|^{2} \gamma^{s-4-2m} d\mu \mbox{,}
\end{multline*}
where $C$ is a constant depending only on $\theta$, $s$, $m$ and $c_\gamma$.
\end{lem}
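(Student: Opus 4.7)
The plan is to reduce Lemma \ref{T:e3} to Lemma \ref{T:e2} by dispensing with the two non-$P$-type integrands on its right-hand side---the $\partial\gamma/\partial t$ term and the one involving $|\nabla\gamma|^{4}+\gamma^{2}|\nabla_{(2)}\gamma|^{2}$---and funnelling them into (i) admissible $P_{\alpha}^{\cdot}(A)*\nabla_{(m)}A\cdot\gamma^{s}$ integrands, (ii) a small multiple of $\int|\nabla_{(m+2)}A|^{2}\gamma^{s}$ absorbed on the left via the cushion $2-\theta$, and (iii) the single reservoir $\int_{[\gamma>0]}|A|^{2}\gamma^{s-4-2m}\,d\mu$.

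For the $\partial_{t}\gamma$ term, the chain rule $\partial_{t}\gamma=\langle D\tilde\gamma,\partial_{t}f\rangle=-F\langle D\tilde\gamma,\nu\rangle$ together with \eqref{Egamma} yields $|\partial_{t}\gamma|\le c_{\gamma}\bigl(|\Delta H|+\sum_{\alpha=0}^{3}|P_{\alpha}^{0}(A)|\bigr)$. The $|\Delta H|$ piece I would handle by integration by parts---writing $\Delta H=\Div(\nabla H)$ and moving one derivative off $\Delta H$ onto the remaining cutoff-and-curvature factor---so that after combining with \eqref{EQbasicgradHgradAo2} the result is of shape $P_{2}^{m+2}(A)*\nabla_{(m)}A\cdot\gamma^{s}$ modulo lower-order pieces of the form $|\nabla_{(m)}A|^{2}\gamma^{k}$, $k\ge s-4$. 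The polynomial pieces $|\nabla_{(m)}A|^{2}|A|^{k}\gamma^{s-1}$, $k\in\{1,2,3\}$, are already of shape $P_{k+1}^{m}(A)*\nabla_{(m)}A$ with $k+1\in\{2,3,4\}$; any residual $\gamma$-exponent discrepancy between $\gamma^{s-1}$ and $\gamma^{s}$ is resolved by Cauchy--Young splits that absorb the surplus into the same residual pool $|\nabla_{(m)}A|^{2}\gamma^{k}$. For the $\gamma$-spatial-derivative term, \eqref{Egamma} gives $|\nabla\gamma|^{4}+\gamma^{2}|\nabla_{(2)}\gamma|^{2}\le C+C\gamma^{2}|A|^{2}$, producing a clean $P_{3}^{m}(A)*\nabla_{(m)}A\cdot\gamma^{s}$ contribution from the $|A|^{2}$ part plus another $|\nabla_{(m)}A|^{2}\gamma^{s-4}$ residual.

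All such residuals collect into a single integral of the form $\int|\nabla_{(m)}A|^{2}\gamma^{k}$ with $k\ge s-4\ge 2m$. To dispose of it I would iterate integration by parts on $\nabla_{(m)}A*\nabla_{(m)}A*\gamma^{k}$, shifting derivatives in pairs---half onto the remaining copy of $\nabla_{(m)}A$ and half onto $\gamma$ (controlled by \eqref{Egamma}, each order-two shift costing at most a factor of $|A|$)---while gathering commutators $*A*A$ from \eqref{EQinterchangegeneral}. After $m$ such rounds, one final Peter--Paul delivers
\[
\int|\nabla_{(m)}A|^{2}\gamma^{k}\,d\mu\le\tfrac{\theta}{2}\int|\nabla_{(m+2)}A|^{2}\gamma^{s}\,d\mu+C\int_{[\gamma>0]}|A|^{2}\gamma^{k-2m}\,d\mu+R,
\]
where $R$ is a further collection of admissible $P$-integrands. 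The hypothesis $s\ge 2m+4$ is precisely what keeps $k-2m\ge 0$ throughout and ensures $\gamma^{k-2m}\le\gamma^{s-4-2m}$ (since $\gamma\le 1$ and $k\ge s-4$); absorbing the $\tfrac{\theta}{2}$-term on the left completes the estimate. The main obstacle is the bookkeeping in this interpolation: one must check that every commutator and every accumulated power of $|A|$ produced along the way lands either in $\sum_{\alpha=1}^{3}P_{\alpha}^{m+2}$, in $\sum_{\alpha=2}^{5}P_{\alpha}^{m}$, or in the $|A|^{2}\gamma^{s-4-2m}$ reservoir, which is exactly what dictates the dependence of the constant $C$ on $\theta$, $s$, $m$, and $c_\gamma$.
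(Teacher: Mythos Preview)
Your proposal is correct and follows essentially the same route as the paper: start from Lemma \ref{T:e2}, integrate by parts the $\Delta H$ contribution in $\partial_t\gamma$, bound the polynomial pieces and the $|\nabla\gamma|^4+\gamma^2|\nabla_{(2)}\gamma|^2$ term pointwise via \eqref{Egamma} and Young's inequality, and then dispose of the resulting residuals $\int|\nabla_{(j)}A|^2\gamma^k$ (for $j\in\{m,m+1\}$, $k\ge s-4$) by the iterated integration-by-parts interpolation, absorbing the small $|\nabla_{(m+2)}A|^2\gamma^s$ contribution on the left. Your identification of the role of $s\ge 2m+4$ in keeping the $\gamma$-exponents nonnegative is exactly right.

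Two small remarks. First, once you write $\partial_t\gamma=-F\langle D\tilde\gamma,\nu\rangle$ you should integrate by parts the genuine $\Delta H$ factor rather than the pointwise bound $|\Delta H|$; you clearly intend this, but the phrasing slips between the two. Second, the interpolation step \eqref{EQintind} (and its two-step consequence \eqref{EQintiter}) is obtained by tracing adjacent indices after each integration by parts, so no curvature commutators actually appear and your remainder $R$ is in fact empty; this simplifies the bookkeeping you flag at the end. The paper's proof simply quotes \eqref{EQintiter} and its analogue for $\int|\nabla_{(m)}A|^2\gamma^{s-4}$ directly, which is the cleanest way to record the step.
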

\begin{proof}
Estimate the time derivative of $\gamma$ by
\[
\pD{\gamma}{t}
 \le c_\gamma|\Delta H + P_3^0(A)| \le c_\gamma P_1^{2}(A) + c_\gamma P_3^0(A),
\]
so that using the divergence theorem and \eqref{Egamma} yields
\begin{align*}
\int_\Sigma &\left| \nabla_{(m)} A \right|^{2} \gamma^{s-1} \frac{\partial \gamma}{\partial t}\, d\mu 
+
\int_\Sigma \left| \nabla_{(m)} A \right|^{2} \gamma^{s-4} (|\nabla\gamma|^4+\gamma^2|\nabla_{(2)}\gamma|^2)\, d\mu 
\\
&\le
  C\int_\Sigma |\nabla_{(m+1)}A|^2\gamma^{s-2}d\mu
+
  C\int_\Sigma |\nabla_{(m)}A|^2\gamma^{s-4}d\mu
\\
&\quad+
  C\int_\Sigma |\nabla_{(m)}A|^2|A|^3\gamma^{s-1}d\mu
+
  C\int_\Sigma |\nabla_{(m)}A|^2|A|^4\gamma^{s}d\mu
\\
&\quad+
  C\int_\Sigma \big(P_3^{m+2}(A)+ P_5^m(A)\big) * \nabla_{(m)}A\ \gamma^{s}d\mu
+
  C\int_{[\gamma>0]} |A|^2 \gamma^{s-4-2m}d\mu,
\end{align*}
where $C$ is a constant depending only on $s$, $m$ and $c_\gamma$.
Young's inequality implies
\[
 \int_\Sigma |\nabla_{(m)}A|^2|A|^3\gamma^{s-1}d\mu
\le
 \frac34
  \int_\Sigma |\nabla_{(m)}A|^2|A|^4\gamma^{s}d\mu
 + \frac14
  \int_\Sigma |\nabla_{(m)}A|^2\gamma^{s-4}d\mu.
\]
Using the divergence theorem it is easy to show that for $\delta>0$ (cf. \eqref{EQintind})
\begin{equation}
\label{EQintiter}
  \int_\Sigma |\nabla_{(m+1)}A|^2\gamma^{s-2}d\mu
\le
  \delta\int_\Sigma |\nabla_{(m+2)}A|^2\gamma^sd\mu
+
  C\int_{[\gamma>0]} |A|^2 \gamma^{s-4-2m}d\mu,
\end{equation}
and similarly
\[
  \int_\Sigma |\nabla_{(m)}A|^2\gamma^{s-4}d\mu
\le
  \delta\int_\Sigma |\nabla_{(m+2)}A|^2\gamma^sd\mu
+
  C\int_{[\gamma>0]} |A|^2 \gamma^{s-4-2m}d\mu,
\]
where $C$ depends additionally upon $\delta$.
Combining these inequalities we have
\begin{align*}
\int_\Sigma &\left| \nabla_{(m)} A \right|^{2} \gamma^{s-1} \frac{\partial \gamma}{\partial t} d\mu 
+
\int_\Sigma \left| \nabla_{(m)} A \right|^{2} \gamma^{s-4} (|\nabla\gamma|^4+\gamma^2|\nabla_{(2)}\gamma|^2) d\mu 
\\
&\le
  \delta\int_\Sigma |\nabla_{(m+2)}A|^2\gamma^{s}d\mu
+
  C\int_\Sigma \big(P_3^{m+2}(A)+ P_5^m(A)\big) * \nabla_{(m)}A\ \gamma^{s}d\mu
\\&\quad
+
  C\int_{[\gamma>0]} |A|^2 \gamma^{s-4-2m}d\mu,
\end{align*}
which, upon combining with Lemma \ref{T:e2}, finishes the proof.
\end{proof}

These energy estimates allow us to locally exert strong control on the curvature in $L^2$.

\begin{lem} \label{T:c1}
Let $\gamma$ be as in \eqref{Egamma}.
There exists a constant $\varepsilon$ depending only on $c_\gamma$ such that if
\[
\sup_{0 \leq t <T} \left\| A \right\|^{2}_{2, \left[ \gamma >0 \right]} \leq \varepsilon 
\]
then, under the flow \eqref{E:theflow}, there is a $c$ depending only on $\varepsilon_0$ and $c_\gamma$ such that for any $t\in \left[ 0, T\right)$,
\begin{align}
\int_{\left[ \gamma = 1\right]} \left| A\right|^{2} d\mu
 &+ \int_{0}^{t} \int_{\left[ \gamma=1\right]}
    \left( \left| \nabla_{(2)} A \right|^{2} + \left| A \right|^{2} \left| \nabla A\right|^{2} + \left| A\right|^{6} \right) d\mu \, d\tau
\notag\\
 &\le \left. \int_{\left[ \gamma >0\right]} \left| A \right|^{2} d\mu \right|_{t=0} + c \, \varepsilon \, t \mbox{.}
\label{E:intt}
\end{align}
\end{lem}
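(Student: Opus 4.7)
The plan is to derive (\ref{E:intt}) directly from the energy inequality of Lemma \ref{T:e3} with $m=0$, combining it with the Michael-Simon Sobolev inequality \cite{MS73} and the smallness hypothesis on $\|A\|_{2,[\gamma>0]}$. Setting $m=0$ and $s = 4$ in Lemma \ref{T:e3} gives
\begin{equation*}
\frac{d}{dt}\int_\Sigma |A|^2 \gamma^s d\mu + (2-\theta)\int_\Sigma |\nabla_{(2)}A|^2 \gamma^s d\mu \leq C \int_\Sigma \mathrm{PolyTerms}* A \, \gamma^s d\mu + C\int_{[\gamma>0]} |A|^2 \gamma^{s-4} d\mu,
\end{equation*}
where $\mathrm{PolyTerms}$ collects the $P_\alpha^2(A)$ ($\alpha=1,2,3$) and $P_\alpha^0(A)$ ($\alpha=2,\ldots,5$) contributions. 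Each summand is a product of $A$, $\nabla A$, $\nabla_{(2)}A$ factors of total derivative count at most $2$ and total factor count at most $6$.

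The key analytic step is to absorb every term in the $\mathrm{PolyTerms}*A$ integral using the hypothesis $\|A\|_{2,[\gamma>0]}^2 \leq \varepsilon$. I would first apply Young's inequality to pull out the $|\nabla_{(2)}A|$ factors at cost $\delta \int |\nabla_{(2)}A|^2 \gamma^s$, reducing the remaining task to controlling polynomial weights of the form $|A|^6$ and $|A|^2 |\nabla A|^2$. For these, the Kuwert-Sch\"atzle interpolation machinery (essentially the Michael-Simon Sobolev inequality combined with iterated integration by parts, as in \cite{KS02}) gives
\begin{equation*}
\int_\Sigma |A|^6 \gamma^s d\mu + \int_\Sigma |A|^2 |\nabla A|^2 \gamma^s d\mu \leq C \|A\|_{2,[\gamma>0]}^2 \int_\Sigma |\nabla_{(2)}A|^2 \gamma^s d\mu + C \int_{[\gamma>0]} |A|^2 \gamma^{s-4} d\mu.
\end{equation*}
Choosing $\varepsilon$ sufficiently small (depending only on $c_\gamma$) then permits absorbing the resulting $\int |\nabla_{(2)}A|^2 \gamma^s$ contributions into the left-hand side. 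Applying the same interpolation inequality a second time with a small coefficient pushes the $|A|^6$ and $|A|^2|\nabla A|^2$ integrals themselves onto the LHS, while keeping a strictly positive coefficient in front of $\int |\nabla_{(2)}A|^2 \gamma^s$, yielding
\begin{equation*}
\frac{d}{dt}\int_\Sigma |A|^2 \gamma^s d\mu + c \int_\Sigma \bigl(|\nabla_{(2)}A|^2 + |A|^2|\nabla A|^2 + |A|^6\bigr)\gamma^s d\mu \leq C\int_{[\gamma>0]} |A|^2 \gamma^{s-4} d\mu.
\end{equation*}

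Integrating from $0$ to $t$, using the smallness hypothesis to bound the right-hand time integral by $C\varepsilon t$, and restricting the spatial integrals on the LHS to the set $\{\gamma=1\}$ (where $\gamma^s=1$) then produces (\ref{E:intt}). The main obstacle is the absorption step itself: every summand in $\mathrm{PolyTerms}*A$ must be matched to a Sobolev estimate of appropriate form, and constants must be tracked so as to depend only on $\varepsilon_0$ and $c_\gamma$. In particular, the $P_3^2(A)*A$ contribution contains both an $|A|^3|\nabla_{(2)}A|$ piece (handled by Young before Sobolev) and a mixed $|A|^2|\nabla A|^2$ piece (handled directly by the interpolation estimate), and checking that all such terms fit into one of these two patterns is the bulk of the technical work.
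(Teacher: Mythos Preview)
Your proposal is correct and follows essentially the same route as the paper: start from Lemma \ref{T:e3} with $m=0$, $s=4$, split off the $|\nabla_{(2)}A|$ contributions by Young's inequality, control the remaining curvature polynomials via the Kuwert--Sch\"atzle interpolation estimate \eqref{E:4.2}, absorb using the smallness of $\|A\|_{2,[\gamma>0]}^2$, and integrate in time. The only points you leave implicit that the paper spells out are the handling of the intermediate terms $|A|\,|\nabla A|^2$ (split as $\tfrac12|\nabla A|^2 + \tfrac12|A|^2|\nabla A|^2$, then integrate by parts on the first piece) and the lower powers $|A|^3,\dots,|A|^5$ (interpolated between $|A|^2$ and $|A|^6$), but these fit exactly into the two patterns you describe.
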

\begin{proof}
Lemma \ref{T:e3} with $s=4$ and $m=0$ gives  
 \begin{multline} \label{E:sm}
  \frac{d}{dt} \int_\Sigma  \left| A \right|^{2} \gamma^{4} d\mu + \left( 2-\theta \right) \int_\Sigma  \left|
\nabla_{(2)} A \right|^{2} \gamma^{4} d\mu \\
 \leq  \int_\Sigma \left[ \sum_{\alpha=1}^{3} P_{\alpha}^{2}\left( A \right)+  \sum_{\alpha=2}^{5} P_{\alpha}^{0}\left( A
\right) \right] * A \gamma^{4} d\mu  + c\int_{\left[ \gamma>0\right]} \left| A \right|^{2} d\mu \mbox{.} 
\end{multline}
We estimate
\[
\sum_{\alpha = 1}^{3} P_{\alpha}^{2} \left( A\right) * A \leq 
  c \left[ \left( 1 +  \left| A \right|^{2} \right)  \left| \nabla_{(2)}A \right| + \left( 1 +  \left| A \right| \right)  \left| \nabla A \right|^{2} \right] \left| A \right|
\]
and
\[
\sum_{\alpha = 2}^{5} P_{\alpha}^{0} \left( A\right) * A \leq c \left( \left| A \right|^{3} +  \left| A \right|^{4} + \left| A \right|^{5} +  \left| A \right|^{6} \right) \mbox{.}
\]
Therefore
\[
\int_\Sigma \sum_{\alpha = 2}^{5} P_{\alpha}^{0} \left( A\right) * A \gamma^4 d\mu\leq c \int_{\left[ \gamma >0\right]}
\left| A \right|^{2} d\mu + c \int_\Sigma \left| A \right|^6 \gamma^4 d\mu \mbox{.}
\]
We also estimate for $\delta > 0$
\[
\int_\Sigma |\nabla_{(2)}A| \left| A \right| \gamma^{4} d\mu \leq \delta \int_\Sigma \left| \nabla_{(2)} A \right|^{2} \gamma^{4} d\mu + \frac{1}{4\delta} \int_{\left[ \gamma>0\right]} \left| A\right|^{2} d\mu
\]
and
\[
\int_\Sigma | \nabla_{(2)}A | \left| A \right|^{3} \gamma^{4} d\mu \leq \delta \int_\Sigma \left| \nabla_{(2)}
A \right|^{2} \gamma^{4} d\mu + \frac{1}{4\delta} \int_\Sigma \left| A\right|^{6} \gamma^{4} d\mu \mbox{.}
\]
The last term on the right is now estimated exactly as in \cite{KS02} using several applications of the Michael-Simon
Sobolev inequality \cite{MS73}: For any $u\in C^{1}_{c}\left( \Sigma \right)$,
\begin{equation}
\left( \int_\Sigma u^{2} d\mu \right)^{\frac{1}{2}} \leq \frac{4^3}{\omega_2^{1/2}}\left( \int_\Sigma \left| \nabla u\right| d\mu +
\int_\Sigma \left| H \right| \left| u \right| d\mu \right) \mbox{,}
\label{EQmss}
\end{equation}
where $\omega_n=\SH^n(B_1)$.
Kuwert and Sch\"{a}tzle used this to establish the
following flow-independent inequality for immersed surfaces \cite[Lemma 4.2]{KS02}: 
\begin{multline} \label{E:4.2}
  \int_\Sigma \left| A \right|^6 \gamma^s d\mu + \int_\Sigma \left| A\right|^2 \left| \nabla A \right|^2 \gamma^s d\mu \\
    \leq c\int_{\left[ \gamma > 0\right]} \left| A\right|^2 d\mu \int_\Sigma \left( \left| \nabla_{(2)} A
\right|^2 + \left| A \right|^6 \right) \gamma^s d\mu + c\left( \int_{\left[ \gamma > 0\right]} \left| A\right|^2 d\mu \right)^2 \mbox{,}
\end{multline}
where $c$ is a constant depending only on $s$ and $c_\gamma$.
We additionally need
\[
\int_\Sigma \left| A\right| \left| \nabla A \right|^2 \gamma^4 d\mu \leq \frac{1}{2} \left( \int_\Sigma \left| \nabla
A \right|^2 \gamma^4 d\mu + \int_\Sigma \left| A \right|^2 \left| \nabla A \right|^2 \gamma^4 d\mu \right)
\]
and by integration by parts 
\[
 \int_\Sigma \left| \nabla A \right|^2 \gamma^4 d\mu \leq \delta \int_\Sigma \left| \nabla_{(2)} A \right|^2 \gamma^4 d\mu
+ c \int_{\left[ \gamma >0\right]} \left| A \right|^{2} d\mu,
\]
where $c$ is a constant depending only on $c_\gamma$ and $\delta$.
Applying \eqref{E:4.2},
\begin{multline*}
  \int_\Sigma \left| A\right| \left| \nabla A \right|^2 \gamma^4 d\mu \leq \delta \int_\Sigma \left| \nabla_{(2)} A \right|^2 \gamma^4 d\mu + c \int_{\left[ \gamma >0\right]} \left| A \right|^{2} d\mu\\
  +  c\int_{\left[ \gamma > 0\right]} \left| A\right|^2 d\mu \int_\Sigma \left( \left| \nabla_{(2)} A
\right|^2 + \left| A \right|^6 \right) \gamma^s d\mu + c \left( \int_{\left[ \gamma > 0\right]} \left| A\right|^2 d\mu \right)^2 \mbox{.}
  \end{multline*}
Altogether we have
\begin{align*}
  & \frac{d}{dt} \int_\Sigma  \left| A \right|^{2} \gamma^{4} d\mu + \left( 2 - \theta \right) \int_\Sigma  \left|
\nabla_{(2)} A \right|^{2} \gamma^{4} d\mu \\
  & \leq c \int_{\left[ \gamma > 0\right]} \left| A\right|^2 d\mu  \int_\Sigma \left( \left| \nabla_{(2)} A \right|^{2} + \left| A \right|^{6} \right) \gamma^{4} d\mu 
 + c \left( \int_{\left[ \gamma > 0\right]} \left| A\right|^2 d\mu \right)^2 + c \int_{\left[ \gamma > 0\right]} \left| A\right|^2 d\mu  \mbox{.}
   \end{align*}
Using \eqref{E:4.2} again,
\begin{align*}
   &\frac{d}{dt} \int_\Sigma  \left| A \right|^{2} \gamma^{4} d\mu + \left( 2 - \theta \right) \int_\Sigma  \left(
\left| \nabla_{(2)} A \right|^{2} + \left| A\right|^{2} + \left| A\right|^6 \right) \gamma^{4} d\mu \\
 &  \leq c \int_{\left[ \gamma > 0\right]} \left| A\right|^2 d\mu  \int_\Sigma \left( \left| \nabla_{(2)} A \right|^{2} + \left| A \right|^{6} \right) \gamma^{4} d\mu
 + c \left( \int_{\left[ \gamma > 0\right]} \left| A\right|^2 d\mu \right)^2 + c \int_{\left[ \gamma > 0\right]} \left| A\right|^2 d\mu \\
 & \leq \varepsilon \int_\Sigma \left( \left| \nabla_{(2)} A \right|^{2} + \left| A \right|^{6} \right) \gamma^{4} d\mu + c\left( \varepsilon +1 \right) \varepsilon \mbox{.}
    \end{align*}
Therefore for $\theta, \varepsilon$ small enough there is a $c$ depending only on $\varepsilon$ and $c_\gamma$ such that
   $$\frac{d}{dt} \int_\Sigma  \left| A \right|^{2} \gamma^{4} d\mu + \int_\Sigma  \left( \left| \nabla_{(2)} A
\right|^{2} + \left| A\right|^{2}|\nabla A|^2 + \left| A\right|^6 \right) \gamma^{4} d\mu \leq c \, \varepsilon \mbox{.}$$
The result now follows by integrating in time.
\end{proof}

\begin{lem} \label{T:c2}
Suppose $\gamma$ is as in \eqref{Egamma} and $s\geq 2m+4$.  While a solution to the flow \eqref{E:theflow} exists, we
have
\begin{multline*}
 \frac{d}{dt} \int_\Sigma  \left| \nabla_{(m)} A \right|^{2} \gamma^{s} d\mu + \int_\Sigma  \left| \nabla_{(m+2)} A
\right|^{2} \gamma^{s} d\mu \\
 \leq  C \left\| A \right\|^{4}_{\infty, \left[ \gamma >0\right]}\int_\Sigma \left| \nabla_{(m)} A \right|^{2}
\gamma^{s} d\mu 
 +  C \left(1 + \left\| A \right\|^{4}_{\infty, \left[ \gamma >0\right]}\right)  \left(
\left\| A \right\|^{2}_{2, \left[ \gamma>0 \right]} \right) \mbox{,}
 \end{multline*}
where $C$ is a constant depending only on $s$, $m$ and $c_\gamma$.
\end{lem}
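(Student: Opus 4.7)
The plan is to start from Lemma \ref{T:e3} with some fixed small $\theta$ (say $\theta=1$) and then estimate the mixed $P$-style terms on the right-hand side, so that the task reduces to bounding
\[
I := \int_\Sigma \Bigl[\sum_{\alpha=1}^3 P_\alpha^{m+2}(A) + \sum_{\alpha=2}^5 P_\alpha^m(A)\Bigr] * \nabla_{(m)}A\ \gamma^s\, d\mu
\]
by the right-hand side of the claim, modulo a small fraction of $\int |\nabla_{(m+2)}A|^2 \gamma^s\, d\mu$ that we can absorb into the left.

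The key reduction is to extract powers of $\|A\|_{\infty, [\gamma>0]}$ from every $A$-factor carrying no derivatives. For a typical term $\nabla_{(k_1)}A * \cdots * \nabla_{(k_\alpha)}A * \nabla_{(m)}A$ with $\sum_i k_i = K \in \{m+2, m\}$, the worst derivative distribution (one $k_j = K$, the rest zero) gives the pointwise bound
\[
|P_\alpha^K(A) * \nabla_{(m)}A| \leq C \|A\|_{\infty,[\gamma>0]}^{\alpha - 1} |\nabla_{(K)}A|\, |\nabla_{(m)}A|.
\]
Intermediate configurations (several $k_i \geq 1$) are controlled by Hölder's inequality combined with the Kuwert--Schätzle interpolation machinery (Michael--Simon Sobolev inequality together with iterated integration by parts, as in the proof of Lemma \ref{T:c1} and in \eqref{E:4.2}), yielding bounds of the same magnitude. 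For the high-order terms ($K=m+2$), Young's inequality produces $\epsilon |\nabla_{(m+2)}A|^2 + C_\epsilon \|A\|_\infty^{2(\alpha-1)} |\nabla_{(m)}A|^2$, with the first piece absorbed into the LHS; for $K=m$, we are left directly with $\|A\|_\infty^{\alpha-1} |\nabla_{(m)}A|^2$.

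Collecting contributions over $\alpha \in \{1,2,3\}$ and $\alpha \in \{2,\dots,5\}$, the resulting prefactors of $\int |\nabla_{(m)}A|^2 \gamma^s\, d\mu$ involve $\|A\|_\infty^j$ for $j \in \{0,1,2,3,4\}$, each dominated by $C(1+\|A\|_\infty^4)$. To extract the $\|A\|_\infty^4$-weighted piece demanded by the claim, I would write
\[
C(1+\|A\|_\infty^4)\int |\nabla_{(m)}A|^2 \gamma^s\, d\mu = C\|A\|_\infty^4 \int |\nabla_{(m)}A|^2 \gamma^s\, d\mu + C\int |\nabla_{(m)}A|^2 \gamma^s\, d\mu
\]
and apply \eqref{EQintiter} to the second summand, bounding it by $\delta \int |\nabla_{(m+2)}A|^2 \gamma^s\, d\mu + C\|A\|_{2,[\gamma>0]}^2$. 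Choosing $\epsilon,\delta$ small enough so that all $\int |\nabla_{(m+2)}A|^2 \gamma^s\, d\mu$ contributions absorb into the LHS with coefficient exactly $1$, and combining the resulting $C\|A\|_{2,[\gamma>0]}^2$ residue with the $C\int_{[\gamma>0]}|A|^2\gamma^{s-4-2m}\,d\mu$ term inherited from Lemma \ref{T:e3}, yields the stated inequality.

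The main obstacle is a careful treatment of the intermediate-derivative configurations in $P_\alpha^K(A)$ in which neither a clean $\|A\|_\infty$-extraction nor a clean isolated high-derivative factor is directly available; this is precisely where the Kuwert--Schätzle interpolation framework does most of the work, and one must verify case-by-case that the resulting bounds conform to the worst-case structure used above.
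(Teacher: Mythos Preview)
Your approach is essentially the paper's: start from Lemma \ref{T:e3}, estimate the $P$-terms, absorb the $\int|\nabla_{(m+2)}A|^2\gamma^s$ contributions, and split off the unweighted $\int|\nabla_{(m)}A|^2\gamma^s$ piece via \eqref{EQintind}. The one place where you are imprecise is exactly the point you flag as ``the main obstacle'': the intermediate-derivative configurations are \emph{not} handled by the Michael--Simon inequality or by \eqref{E:4.2} (those are specific to the $m=0$ case in Lemma \ref{T:c1}), but by the purely multiplicative interpolation inequality \eqref{E:int1} from \cite[Appendix]{KS02}, which for $i_1+\cdots+i_r=2k$ with each $i_j\le k$ gives
\[
\int_\Sigma |\nabla_{(i_1)}A*\cdots*\nabla_{(i_r)}A|\,\gamma^s\,d\mu
\le C\,\|A\|_{\infty,[\gamma>0]}^{\,r-2}\Bigl(\int_\Sigma|\nabla_{(k)}A|^2\gamma^s\,d\mu + \|A\|_{2,[\gamma>0]}^2\Bigr).
\]
With $k=m$ this disposes of all of $\sum_{\alpha=2}^5 P_\alpha^m(A)*\nabla_{(m)}A$ at once.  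With $k=m+1$ it handles the $P_\alpha^{m+2}$ terms for $\alpha=2,3$ after a single integration by parts on any factor carrying $m+2$ derivatives (this is the ``divergence theorem'' step in the paper), producing $(\|A\|_\infty+\|A\|_\infty^2)\int|\nabla_{(m+1)}A|^2\gamma^s$; one further use of \eqref{E:int2} followed by Young moves that prefactor onto $\int|\nabla_{(m)}A|^2\gamma^s$ with exponent at most $4$.  The $\alpha=1$ term is exactly $\nabla_{(m+2)}A*\nabla_{(m)}A$ and is split by Young as you say.  So no case-by-case verification is actually required once you invoke the correct interpolation lemma.
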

\begin{proof}
We use with Lemma \ref{T:e3} the following interpolation inequalities (see \cite[Appendix]{KS02}) for tensor fields $T$ on $\Sigma$:
\begin{enumerate}
  \item[(i)] Let $0 \leq i_{1}, \ldots, i_{r} \leq k$, $i_{1} + \cdots + i_{r} =2k$ and $s\geq 2k$.  Then for a constant
$C$ depending only on $k$, $s$, $r$ and $c_\gamma$,
\begin{equation} \label{E:int1}
  \int_\Sigma \left| \nabla_{(i_{1})} T * \cdots * \nabla_{(i_{r})} T \right| \gamma^{s} d\mu 
    \leq C \left\| T \right\|^{r-2}_{\infty, \left[ \gamma>0\right]} \left( \int_\Sigma \left| \nabla_{(k)} T \right|^{2}
\gamma^{s} d\mu + \left\| T \right\|^{2}_{2, \left[ \gamma >0\right]} \right) \mbox{.}
\end{equation}
  \item[(ii)] Let $1\leq p, q, r \leq \infty$ satisfy $\frac{1}{p} + \frac{1}{q} = \frac{1}{r}$ and let $\alpha, \beta \geq 0$ 
satisfy $\alpha + \beta = 1$.  Then for $s\geq \max \left( \alpha q, \beta p \right)$ and $-\frac{1}{p} \leq t \leq
\frac{1}{q}$, there is a constant $C$ depending only on $r$ such that
\begin{multline} \label{E:int2}
  \left( \int_\Sigma \left| \nabla T \right|^{2r} \gamma^{s} d\mu \right)^{\frac{1}{r}} \leq C\bigg[ \left( \int_\Sigma \left| T
\right|^{q} \gamma^{s\left( 1-tq\right)} d\mu\right)^{\frac{1}{q}} \left( \int_\Sigma \left| \nabla_{(2)} T \right|^{p}
\gamma^{s\left( 1+t p \right)} d\mu \right)^{\frac{1}{p}} \\*
  +  (c_\gamma) s \left( \int_\Sigma \left| T \right|^{q} \gamma^{s - \alpha q } d\mu \right)^{\frac{1}{q}} \left(
\int_\Sigma
\left| \nabla T \right|^{p} \gamma^{s - \beta p } d\mu \right)^{\frac{1}{p}}\bigg] \mbox{.}
  \end{multline}
\end{enumerate}
A straightforward proof by induction on \eqref{E:int2} above yields additionally
\begin{equation}
\label{EQintind}
\left( \int_\Sigma |{\nabla_{(k)}A}|^p\gamma^sd\mu \right)^\frac{1}{p}
\\
  \le \delta \left( \int_\Sigma |\nabla_{(k+1)}A|^p\gamma^{s+p}d\mu \right)^\frac{1}{p}
      + C \left( \int_{[\gamma>0]}|A|^p\gamma^{s-kp}d\mu \right)^\frac{1}{p},
\end{equation}
where $\delta > 0$, $2\le p < \infty$, $k\in\N$, $s\ge k\,p$, and $C$ is a constant depending only on $\delta$ and $c_\gamma$.

From \eqref{E:int1} we have for each $\alpha= 2, \ldots, 5$,
$$  \int_\Sigma P_{\alpha}^{m}\left( A \right) * \nabla_{(m)} A \gamma^{s} d\mu \leq C \left\| A \right\|_{\infty, \left[
\gamma >0\right]}^{\alpha-1} \left( \int_\Sigma \left| \nabla_{(m)} A \right|^{2} \gamma^{s} d\mu + \left\| A \right\|_{2, \left[ \gamma >0\right]}^{2} \right)$$
where $C$ is a constant depending only on $\alpha$, $m$, $s$  and $c_\gamma$.
Thus
\begin{align*}
  &\int_\Sigma \sum_{\alpha=2}^{5} P_{\alpha}^{m}\left( A\right) * \nabla_{(m)}A \gamma^{s} d\mu \\
  & \quad \leq 
   c \sum_{\alpha=1}^{4} \left\| A \right\|_{\infty, \left[ \gamma>0\right]}^{\alpha} \left( \int_\Sigma \left|
\nabla_{(m)} A \right|^{2} \gamma^{s} d\mu + \left\| A \right\|_{2, \left[ \gamma>0\right]}^{2} \right)\\
  & \quad \leq c \left( 1 + \left\| A \right\|_{\infty, \left[ \gamma>0\right]}^{4} \right) \left( \int_\Sigma \left|
\nabla_{(m)} A \right|^{2} \gamma^{s} d\mu + \left\| A \right\|_{2, \left[ \gamma>0\right]}^{2} \right) \mbox{.}
   \end{align*}
Again using \eqref{E:int1}, Cauchy's inequality and the divergence theorem
\begin{align*}
 &  \int_\Sigma \sum_{\alpha=1}^{3} P_{\alpha}^{m+2}\left( A \right) * \nabla_{(m)} A \gamma^{s} d\mu \\
 & \quad \leq C \left( \left\| A \right\|_{\infty, \left[ \gamma >0\right]}^{2} + \left\| A \right\|_{\infty, \left[
\gamma >0\right]} \right) \left( \int_\Sigma \left| \nabla_{(m+1)} A \right|^{2} \gamma^{s} d\mu + \left\| A
\right\|_{2, \left[ \gamma >0\right]}^2 \right) \\
& \qquad   +  \delta \int_\Sigma \left| \nabla_{(m+2)} A \right|^{2} \gamma^{s} d\mu + C\left(  \delta
\right) \left(1 + \left\| A \right\|_{\infty, \left[ \gamma >0\right]}^{4}\right) \int_\Sigma \left| \nabla_{(m)} A \right|^{2} \gamma^{s} d\mu \mbox{.}
   \end{align*}
Estimating the $\vn{\nabla_{(m+1)}A}_2^2$ and $\vn{\nabla_{(m)}A}_2^2$ terms using \eqref{EQintiter}, \eqref{E:int2}, \eqref{EQintind},  then combining our
estimates completes the proof.
\end{proof}

\begin{lem} \label{T:c3}
Let $\gamma$ be as in \eqref{Egamma}. Under the flow \eqref{E:theflow}, if
\[
\sup_{0\leq t\leq T} \int_{\left[ \gamma >0\right]} \left| A\right|^{2} d\mu \leq \varepsilon
\]
where $\varepsilon$ is a constant depending only on $c_\gamma$, then
\begin{equation} \label{E:higher}
  \left\| \nabla_{(m)} A \right\|_{\infty, \left[ \gamma =1\right]} \leq c
\end{equation}
where $c$ is a constant depending only on $m$, $T$, $c_\gamma$, and $\alpha_{0}(m+2)$, where
\[
\alpha_{0}\left( m \right) = \sum_{j=0}^{m} \left\| \nabla_{(j)} A\right\|_{2, \left[ \gamma >0 \right]} \Big|_{t=0}.
\]
\end{lem}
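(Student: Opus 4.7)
The plan is to bootstrap the $L^2$ curvature smallness of Lemma~\ref{T:c1} up to pointwise bounds on $\nabla_{(m)}A$ via the energy inequality of Lemma~\ref{T:c2} and Gr\"onwall's inequality. I would proceed in two stages. In the first stage, I establish by induction on $m$ the time-uniform local $L^2$ bounds
\[
\sup_{0\le t\le T}\int_\Sigma|\nabla_{(m)}A|^2\gamma_m^s\,d\mu \le c(m,T,c_\gamma,\alpha_0(m))
\]
using a nested family of cutoffs $\gamma=\gamma_0\ge\gamma_1\ge\ldots\ge\gamma_{m+2}$ of the form \eqref{Egamma}, each satisfying $\gamma_i\equiv 1$ on $[\gamma_{i+1}>0]$. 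In the second stage, I convert these into the desired pointwise bound \eqref{E:higher} via iterated applications of the Michael--Simon Sobolev inequality \eqref{EQmss}, which on the two-dimensional $\Sigma$ yields a localised embedding $\vn{T}_{\infty,[\gamma=1]} \le C\bigl(\vn{T}_{2,[\gamma>0]} + \vn{\nabla T}_{2,[\gamma>0]} + \vn{\nabla_{(2)}T}_{2,[\gamma>0]}\bigr)$; applied to $T=\nabla_{(m)}A$ this accounts for the $\alpha_0(m+2)$ dependence in the hypothesis.

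The base of the induction is furnished directly by Lemma~\ref{T:c1}, which delivers both $\sup_t\vn{A}_{2,[\gamma_1>0]}^2\le c$ and the crucial spacetime integrability
\[
\int_0^T\!\int_{[\gamma_1=1]} \bigl(|\nabla_{(2)}A|^2 + |A|^6\bigr)\,d\mu\,d\tau \le c.
\]
A standard Gagliardo--Nirenberg-type interpolation on 2-surfaces (obtained by feeding $A\gamma_1^{s/2}$ into \eqref{EQmss} and applying Cauchy--Schwarz) gives
\[
\vn{A}_{\infty,[\gamma_1=1]}^2 \le C\bigl(\vn{A}_{2,[\gamma_1>0]}\vn{\nabla_{(2)}A}_{2,[\gamma_1>0]} + \vn{A}_{2,[\gamma_1>0]}^2\bigr),
\]
so squaring and integrating in time yields the key fact
\[
\int_0^T \vn{A}_{\infty,[\gamma_1=1]}^4\,dt \le c,
\]
which is precisely the ingredient needed to feed Gr\"onwall in the induction step.

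For the induction step I fix $m\ge 1$ and apply Lemma~\ref{T:c2} with cutoff $\gamma_{m+1}$. Writing $y(t)=\int|\nabla_{(m)}A|^2\gamma_{m+1}^s\,d\mu$, the inequality takes the form
\[
y'(t) \le C\vn{A}_{\infty,[\gamma_{m+1}>0]}^4\, y(t) + C\bigl(1 + \vn{A}_{\infty,[\gamma_{m+1}>0]}^4\bigr)\varepsilon,
\]
so Gr\"onwall using the $L^1_t$ bound on $\vn{A}_\infty^4$ from the base step together with the initial datum $y(0)\le\alpha_0(m)^2$ delivers $\sup_{[0,T]}y(t)\le c$. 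Iterating for $m=1,\ldots,M$ completes the first stage up to order $M$; invoking the second-stage Sobolev embedding with $M=m+2$ then produces \eqref{E:higher}.

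The main obstacle is the apparent circularity in the energy argument: Lemma~\ref{T:c2}'s coefficient $\vn{A}_\infty^4$ is naturally controlled only by the very $L^2$-norms of $\nabla_{(2)}A$ we are trying to bound. The resolution is that Gr\"onwall does not need a pointwise-in-time bound on $\vn{A}_\infty^4$ — integrability in time suffices — and this is precisely what Lemma~\ref{T:c1} provides through the spacetime integral of $|A|^6$ (equivalently of $|\nabla_{(2)}A|^2$ by interpolation).
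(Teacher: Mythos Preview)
Your strategy is exactly the one the paper uses: Lemma~\ref{T:c1} for the base, the Michael--Simon interpolation to obtain $\int_0^T\vn{A}_{\infty}^4\,dt\le c$, then Lemma~\ref{T:c2} with Gr\"onwall for the $L^2$ bounds on $\nabla_{(m)}A$, and finally a Sobolev-type inequality to pass to $L^\infty$. The paper implements the nested cutoffs as $\gamma_{\sigma,\tau}=\psi_{\sigma,\tau}\circ\gamma$ with a fixed finite number of levels rather than an $(m+2)$-deep chain, but this is cosmetic.

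One technical point to tighten: your stated embedding
\[
\vn{T}_{\infty,[\gamma=1]}\le C\bigl(\vn{T}_{2,[\gamma>0]}+\vn{\nabla T}_{2,[\gamma>0]}+\vn{\nabla_{(2)}T}_{2,[\gamma>0]}\bigr)
\]
is not valid on an immersed surface as written; the Michael--Simon inequality \eqref{EQmss} carries a mean-curvature term, and the correct localised version is the paper's \eqref{E:T}, which contains an extra $\vn{\,|T|\,|A|^2}_{2,[\gamma>0]}^2$. For $T=A$ this is absorbed by \eqref{E:4.2} under the smallness hypothesis (this is \eqref{E:T2}), giving your interpolation for $\vn{A}_\infty$. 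For $T=\nabla_{(m)}A$ you need a \emph{pointwise-in-time} bound on $\vn{A}_\infty$, not just its $L^1_t$-integrability; the paper obtains this by first running the $L^2$ argument for $m=2$ to get $\sup_t\vn{\nabla_{(2)}A}_2\le c_2$ and then applying \eqref{E:T2} once more. With that correction your argument is complete and coincides with the paper's.
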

\begin{proof}
This is similar to the proof of Proposition 4.6 in \cite{KS02}, using Lemma \ref{T:c1}, Lemma \ref{T:c2} and the
same argument based on the Michael-Simon Sobolev inequality.  We provide a sketch for completeness.  In particular, we
will use from \cite{KS02}: For any tensor $T$ on $\Sigma$ and $\gamma$ as in \eqref{Egamma},
\begin{equation} \label{E:T}
  \left\| T \right\|_{\infty, \left[ \gamma=1\right]}^4 \leq c   \left\| T \right\|_{2, \left[ \gamma>0\right]}^2 \left(
\left\| \nabla_{(2)} T \right\|_{2, \left[ \gamma>0\right]}^2 + \lVert \left| T \right| \left| A\right|^2 \rVert_{2,
\left[ \gamma>0\right]}^2 +  \left\| T \right\|_{2, \left[ \gamma>0\right]}^2 \right) \mbox{.}
\end{equation}
Further, if $T=A$ and $\left\| A \right\|_{2, \left[ \gamma>0\right]}^2 \leq \varepsilon$, for some small
$\varepsilon$ depending only on $c_\gamma$, then together with \eqref{E:4.2} and a trivial covering argument we obtain
\begin{equation} \label{E:T2}
  \left\| A \right\|_{\infty, \left[ \gamma=1\right]}^4 \leq c   \left\| A \right\|_{2, \left[ \gamma>0\right]}^2 \left(
\left\| \nabla_{(2)} A \right\|_{2, \left[ \gamma>0\right]}^2 +  \left\| A \right\|_{2, \left[ \gamma>0\right]}^2 \right) \mbox{.}
  \end{equation}
For a given choice of cutoff function $\gamma$, for $0\leq \sigma < \tau \leq 1$ set $\gamma_{\sigma, \tau} = \psi_{\sigma, \tau} \circ \gamma$ where
$$\gamma_{\sigma, \tau} = \begin{cases}
 0 \mbox{ for } \gamma \leq \sigma\\
 1 \mbox{ for } \gamma \geq \tau \mbox{.} \end{cases}$$
Choose $\psi_{\sigma, \tau} $ such that bounds of the form in \eqref{Egamma} hold.
From Lemma \ref{T:c1}, with $\sigma=0$ and $\tau= \frac{1}{2}$,
\begin{equation}
\label{EQie1} \int_{0}^{T} \int_{\left[\gamma \geq \frac{1}{2}\right]} \left( \left| \nabla_{(2)} A \right|^2 + \left| A
\right|^6 \right) d\mu\, d\tau \leq c \varepsilon \left( 1 + T\right) \mbox{.}
\end{equation}
Now using $\gamma_{\frac{1}{2}, \frac{3}{4}}$ in \eqref{E:T2},
\begin{equation}
\label{EQie2}
\int_{0}^{T} \left\| A\right\|^{4}_{\infty, \left[ \gamma \geq \frac{3}{4} \right]} d\tau \leq c\,\varepsilon^{2}\left( 1 + T\right) \mbox{.}
\end{equation}
With $\sigma=\frac{3}{4}$ and $\tau = \frac{7}{8}$ we obtain from Lemma \ref{T:c2}
\begin{align}
 & \int_\Sigma \left| \nabla_{(m)} A\right|^2 \gamma_{\sigma, \tau}^{s} d\mu + \int_0^t \int_{\left[ \gamma \geq \frac{7}{8}
\right]} \left| \nabla_{(m+2)} A\right|^2 d\mu\, d\tau
\notag
\\
  &\quad \leq \left.  \int_\Sigma \left| \nabla_{(m)} A\right|^2 \gamma_{\sigma, \tau}^{s} d\mu \right|_{t=0} + c
\label{EQie6}
\varepsilon \left( T + \int_0^T \vn{A}^4_{\infty, \left[ \gamma \geq\frac{3}{4} \right]} d\mu\,d\tau \right)
\\
\notag
  &\qquad + c\int_0^t \left(  \vn{A}^4_{\infty, \left[ \gamma \geq\frac{3}{4} \right]} \int_\Sigma \left| \nabla_{(m)} A\right|^2 \gamma_{\sigma, \tau}^{s} d\mu \right) d\tau \mbox{.}
\end{align}
In view of \eqref{EQie2}, applying Gronwall's inequality gives
$$\int_\Sigma \left| \nabla_{(m)} A\right|^2 \gamma_{\sigma, \tau}^{s} d\mu\leq c_m \mbox{,}$$
where here and throughout the proof $c_m$ is a constant depending only on $\alpha_0(m)$ and $T$.
Using this in \eqref{EQie6} we obtain
$$\int_0^t \int_{\left[ \gamma \geq \frac{7}{8} \right]} \left| \nabla_{(m+2)} A\right|^2 d\mu\, d\tau \leq c_m \mbox{.}$$
Hence
$$\sup_{\left[ 0, T\right]} \int_{\left[ \gamma \geq \frac{7}{8} \right]} \left| \nabla_{(m)} A\right|^2  d\mu + \int_0^T
\int_{\left[ \gamma \geq \frac{7}{8} \right]} \left| \nabla_{(m+2)} A\right|^2 d\mu\, d\tau \leq c_m \mbox{.}$$
Now from \eqref{E:T2}
$$\left\| A \right\|^4_{\infty, \left[ \gamma \geq \frac{15}{16} \right]} \leq \varepsilon\,c_2  $$
and using \eqref{E:T} with $T=\nabla_{(m)} A$ we find
\begin{align*}
  & \left\| \nabla_{(m)} A \right\|^4_{\infty, \left[ \gamma =1 \right]} \\
  & \quad \leq c \left\| \nabla_{(m)} A \right\|^2_{2, \left[ \gamma \geq \frac{15}{16} \right]}
   \left( \left\| \nabla_{(m+2)} A \right\|^2_{2, \left[ \gamma \geq \frac{15}{16} \right]} \right. \left. +
\left\| A^2 * \nabla_{(m)} A \right\|^2_{2, \left[ \gamma \geq \frac{15}{16} \right]} + \left\| \nabla_{(m)} A \right\|^2_{2, \left[ \gamma \geq \frac{15}{16} \right]} \right)\\
  & \quad \leq c_m \left[ c_{m+2} + \left( c_2 + 1\right) c_m \right]
\end{align*}
completing the proof.
\end{proof}

\begin{proof}[Proof of Theorem \ref{Tlt}]
Given the bounds of Lemma \ref{T:c3}, this is essentially the same proof by contradiction to the maximality of $T$ as in
\cite{KS02}, using the result on equivalent metrics in \cite{H82}.  The only differences that arise are the result of
the extra terms in the evolution equation for the more general flow \eqref{E:theflow} and subsequent additional terms in
Lemma \ref{T:evlneqns}, but these are controlled using Lemma \ref{T:c3}.  For completeness, we provide a sketch of the
proof.

We may assume by rescaling $f\left( x, t\right) \mapsto \frac{1}{\rho} f\left( x, \rho^4 t\right)$ that $\rho=1$, and
thus need to show $T\geq \frac{1}{c}$.  Set 
 $$\eta\left( t\right) = \sup_{x\in \mathbb{R}^{3}} \int_{f^{-1}\left( B_1\left( x\right) \right)} \left| A\right|^{2} d\mu \mbox{.}$$  
 Via short time existence, $f\left( M \times \left[ 0, t\right] \right)$ is compact for any $t<T$ and $\eta\left( t\right)$ is continuous.  
 Observe
  $$\eta\left( t\right) \leq c_\eta \sup_{x\in \mathbb{R}^{3}} \int_{f^{-1}\big( B_\frac{1}{2}\left( x\right) \big)} \left| A\right|^{2} d\mu \mbox{.}$$
Set, for $\lambda$ to be chosen,
  $$t_0=\sup\left\{ 0\leq t\leq \min\left( T, \lambda \right): \eta\left( \tau\right) \leq 3 \, c_\eta \, \varepsilon_0 \mbox{ for } 0 \leq \tau < t \right\} \mbox{.}$$
It can be shown using \eqref{E:intt} that with $\lambda = \frac{1}{c}{c_\eta}$ and provided $\varepsilon_0$ is small
enough, $t_0 = \min \left( T, \lambda \right)$.  Small enough $\varepsilon_0$ may be obtained by taking $\rho$ small
enough in \eqref{E:localAcond}.

Now if $t_0 = \lambda$ we are done, since then $T\geq \lambda=\frac{1}{c}$ and the integral estimate follows from
\eqref{E:intt}.  So it remains to show we cannot have $t_0 = T < \infty$ by contradicting the maximality of $T$.  (If
$T=\infty$ the result trivially holds.)  So suppose for the sake of obtaining a contradiction that $t_0=T$.  From
\eqref{E:higher} we have
$$\left\| \nabla_{(m)} A \right\|_{\infty} \leq c\left( m, T, \alpha_{0}\left( m+2 \right) \right) \mbox{.}$$

A result of Hamilton in \cite{H82} implies the metrics on $\Sigma_t$ are all uniformly equivalent for $0 \leq t \leq T$.  Converting \eqref{E:higher} into bounds on parameter derivatives of $f$ we have
$$\left\| \partial^m f \right\|_{\infty}, \left\|\partial^m \pD{}{t} f \right\|_{\infty} \leq c\left( m, f_0 \right)$$
where the $\left\| f \right\|_{\infty}$ bound for finite time $T$ follows from \eqref{E:theflow} and \eqref{E:higher}
with $m=0, 1, 2$.  So $f \left( \cdot, t \right) \rightarrow f \left( \cdot, T\right)$ in $C^{\infty}$ and $\Sigma_T$ is smooth.  
This then allows extension of the solution using short time existence, contradicting the maximality of $T$.
\end{proof}

To ensure the existence of a smooth blowup we also need the following version of Lemma \ref{T:c3} which is localised in time
(cf. \cite[Theorem 3.5]{KS01}).

\begin{thm}
\label{Tie}
Suppose $f:\Sigma\times(0,\delta]\rightarrow\R^3$ flows by \eqref{E:theflow} and satisfies
\[
\sup_{0<t\le\delta} \int_{f^{-1}(B_{2\rho}(0))} |A|^2 d\mu \le \varepsilon < \varepsilon_0,
\]
where $\delta \le c\rho^4$.  Then for any $k\in\N_0$ and $t\in(0,\delta)$ we have
\begin{align*}
\vn{\nabla_{(k)}A}_{2,f^{-1}(B_\rho(0))} &\le c_k\sqrt{\epsilon}t^{-\frac{k}{4}}
\\
\vn{\nabla_{(k)}A}_{\infty,f^{-1}(B_\rho(0))} &\le c_k\sqrt{\epsilon}t^{-\frac{k+1}{4}}
\end{align*}
where $c_k$ is an absolute constant for each $k$.
\end{thm}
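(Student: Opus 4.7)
The statement is a localised-in-time refinement of Lemma \ref{T:c3}: we ask for the same kind of pointwise bounds on $\nabla_{(k)}A$ but replace the dependence on initial higher derivatives $\alpha_0(m)$ by the smoothing factor $t^{-k/4}$. The plan is to repeat the proof of Lemma \ref{T:c3} with time-weighted energies, along the lines of \cite[Theorem 3.5]{KS01}.

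Fix a nested sequence of cutoffs $\gamma_k$, each satisfying \eqref{Egamma} with $c_\gamma \le C_k\rho^{-1}$, such that $\gamma_k \equiv 1$ on $f^{-1}(B_{\rho(1+2^{-k-1})}(0))$ and $\mathrm{supp}\,\gamma_k \subset f^{-1}(B_{\rho(1+2^{-k})}(0))$; in particular every $\gamma_k$ equals $1$ on $f^{-1}(B_\rho(0))$. For each $k$, write $\phi_k(t) = \int|\nabla_{(k)}A|^2\gamma_k^{s_k}d\mu$ with $s_k = 2k+4$. Lemma \ref{T:c2} (whose right-hand side simplifies because $\gamma_k$ is time-independent) yields
\[
\dot\phi_k + \int|\nabla_{(k+2)}A|^2\gamma_k^{s_k}d\mu \le C\|A\|^4_{\infty,[\gamma_k>0]}\phi_k + C\bigl(1+\|A\|^4_{\infty,[\gamma_k>0]}\bigr)\varepsilon.
\]

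For $k=0$ the $L^2$ bound is the hypothesis. The $k=0$ $L^\infty$ estimate is obtained by a time-weighted bootstrap: multiplying the $m=0$ version of Lemma \ref{T:c1} (or its refinement via Lemma \ref{T:e3}) by $t$ before integrating, and combining with \eqref{E:T2} on the smaller cutoff, one obtains $\sup_\tau \tau^{1/2}\|A\|^2_{\infty,[\gamma_1=1]} \le c\varepsilon$. The general case proceeds by induction on $k$: multiplying the displayed inequality by $t^{k/2}$ gives
\[
\tfrac{d}{dt}\bigl(t^{k/2}\phi_k\bigr) + t^{k/2}\int|\nabla_{(k+2)}A|^2\gamma_k^{s_k}d\mu \le \tfrac{k}{2}t^{k/2-1}\phi_k + Ct^{k/2}\|A\|^4_\infty\phi_k + Ct^{k/2}\bigl(1+\|A\|^4_\infty\bigr)\varepsilon.
\]
Using \eqref{EQintind} in Peter-Paul form to interpolate $\phi_k$ between orders $k-1$ and $k+1$ on the enclosing cutoff $\gamma_{k-1}$, inserting the inductive bound $\int|\nabla_{(k-1)}A|^2\gamma_{k-1}^{s_{k-1}}d\mu \le c\varepsilon\tau^{-(k-1)/2}$ and the pointwise bound $\|A\|^4_\infty \le c\varepsilon^2 t^{-1}$ from the base case, the right-hand side of the above becomes integrable on $(0,t)$ and the $\int|\nabla_{(k+1)}A|^2$ piece is absorbed into the dissipation term via \eqref{EQintiter}. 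Integration from $0$ to $t$ then delivers $t^{k/2}\phi_k(t) \le c_k\varepsilon$, which is the $L^2$ estimate at order $k$. The corresponding $L^\infty$ bound follows by applying \eqref{E:T} to $T = \nabla_{(k)}A$, using the $L^2$ estimates already obtained at orders $k$, $k+2$ and $0$.

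The main obstacle is bookkeeping — choosing the cutoff exponents $s_k$ and the radii $\rho_k$ so that each interpolation step pushes the estimate onto a strictly larger cutoff set while preserving the correct time-weight; in particular, the apparently dangerous contribution $\tfrac{k}{2}t^{k/2-1}\phi_k$ must be split between the dissipation $t^{k/2}\int|\nabla_{(k+2)}A|^2\gamma_k^{s_k}d\mu$ and an inductively controlled $t^{(k-2)/2}$-weighted lower-order integral, neither of which inflates the final constant. Once this scheme is in place, each individual step is a routine combination of Lemma \ref{T:c2}, \eqref{EQintind}, \eqref{E:4.2} and \eqref{E:T}.
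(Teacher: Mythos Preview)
Your strategy --- time-weighted energies plus induction on the order --- is the same as the paper's, but the mechanism you propose for handling the dangerous term $\frac{k}{2}t^{k/2-1}\phi_k$ does not close, and the paper resolves it differently. You want to interpolate $\phi_k$ between orders $k-1$ and $k+1$, feed in the \emph{pointwise} bound $\phi_{k-1}(\tau)\le c\varepsilon\tau^{-(k-1)/2}$, and absorb the $\phi_{k+1}$ piece into the $(k+2)$-dissipation. But absorbing forces the interpolation parameter to scale like $t$, whereupon the $\phi_{k-1}$ contribution acquires a non-integrable factor $\tau^{-1}$; the pointwise inductive bound alone is too weak. (Incidentally, \eqref{EQintind} interpolates between orders $k+1$ and $0$, not $k-1$ and $k+1$; you would need \eqref{E:int2}.)

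The paper instead steps by two --- only $E_j=\int|\nabla_{(2j)}A|^2\gamma^{4j+4}d\mu$ --- and replaces your polynomial weights $t^{k/2}$ by piecewise-linear time cutoffs $\chi_j$ satisfying $\chi_j'\le (m/t^*)\chi_{j-1}$. The inductive hypothesis \eqref{EQie5} is not a pointwise energy bound but the combined statement
\[
e_{j-1}(t)+\int_0^t\chi_{j-1}(\tau)E_j(\tau)\,d\tau\le c\varepsilon/(t^*)^{j-1},
\]
so that the term coming from differentiating the time weight, $\int_0^t\chi_j'E_j\,d\tau\le(m/t^*)\int_0^t\chi_{j-1}E_j\,d\tau$, is \emph{exactly} the integrated dissipation already supplied by the previous step; no interpolation is needed at all. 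Your polynomial-weight scheme can be repaired in the same way: carry $\int_0^t\tau^{m/2}\phi_{m+2}\,d\tau\le c\varepsilon$ through the induction, and observe that the dangerous integral $\int_0^t\frac{k}{2}\tau^{k/2-1}\phi_k\,d\tau$ is precisely this quantity from step $k-2$. Odd orders are then recovered from even ones via \eqref{E:int2}, and the $L^\infty$ bounds from \eqref{E:T}, \eqref{E:T2}, as you say. The nested spatial cutoffs $\gamma_k$ are harmless but unnecessary; a single $\gamma$ with increasing exponent $s=4j+4$ suffices.
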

\begin{proof}
By scaling, we may assume $\rho=1$.  In this proof we shall abbreviate $B_\rho(0)$ with $B_\rho$.  Estimates
\eqref{EQie1}, \eqref{EQie2} imply
\begin{equation}
\label{EQie3}
\int_0^\delta \int_{f^{-1}(B_{\frac34})} \big(|\nabla_{(2)}A|^2 + |A|^6\big)\, d\mu\, d\tau
+
\int_0^\delta \vn{A}^4_{\infty,f^{-1}(B_{\frac34})} d\tau
 \le c\,\varepsilon,
\end{equation}
where $c$ depends on $\delta$.
Let $t^*<\delta$.
Consider piecewise linear cutoff functions in time $\chi_j:[0,t^*]\rightarrow[0,1]$ defined by
\[
\chi_j(t) =
\begin{cases}
 0\qquad &t \in \big(0, (j-1)\frac {t^*} m\big]
\\
 \frac m{t^*}\big[t-(j-1)\frac{t^*} m\big] &t \in\big((j-1)\frac{t^*} m,j\frac{t^*} m\big)
\\
 1 &t\in\big[j\frac{t^*} m, {t^*}\big]
\end{cases}
\]
where $0\le j \le m$, $m\in\N_0$.  Note that the weak derivative $\chi_j'$ of $\chi_j$ satisfies
\[
0\le \chi_j' \le \frac m{t^*} \chi_{j-1}.
\]
Let us further define
\[
\sigma(t) = \vn{A}^4_{\infty,f^{-1}(B_{\frac34})}
\quad\text{ and }\quad
E_j(t) = \int_\Sigma |\nabla_{(2j)}A|^2\gamma^{4j+4}d\mu.
\]
Then Lemma \ref{T:c2} implies
\[
E_j'(t) + E_{j+1}(t) \le c\sigma(t)E_j(t) + c(1+\sigma(t))\varepsilon.
\]
Cutting off $E_j(t)$ in time by $\chi_j(t)$, we have for $e_j(t) = \chi_j(t)E_j(t)$
\[
e_j'(t) + \chi_j(t)E_{j+1}(t)
 \le c\sigma(t)e_j(t) + c(1+\sigma(t))\varepsilon + \frac m{t^*} \chi_{j-1}(t)E_j(t).
\]
For $t\in(0,t^*)$ integrating the above over $(0,t)$ gives
\begin{align*}
e_j(t) &+ \int_0^t\chi_j(\tau)E_{j+1}(\tau)\,d\tau
\\
 &\le   c\int_0^t \sigma(\tau)e_j(\tau)\,d\tau
     + c\,\varepsilon\int_0^t(1+\sigma(\tau))\,d\tau
     + \frac m{t^*} \int_0^t\chi_{j-1}(\tau)E_j(\tau)\,d\tau
\\
 &\le
   c\,\varepsilon
 + c\int_0^t \sigma(\tau)e_j(\tau)\,d\tau
 + \frac m{t^*} \int_0^t\chi_{j-1}(\tau)E_j(\tau)\,d\tau,
\end{align*}
where we used \eqref{EQie3}.  Using Gronwall's inequality on the above and again noting \eqref{EQie3} yields
\begin{equation}
e_j(t) + \int_0^t\chi_j(\tau)E_{j+1}(\tau)\,d\tau
\le
   c\,\varepsilon
 + ce^{c\,\varepsilon}\frac m{t^*} \int_0^t\Big(\int_0^s\chi_{j-1}(\tau)E_j(\tau)\,d\tau\Big)\sigma(s)\,ds.
\label{EQie4}
\end{equation}
For the purposes of induction, let us assume
\begin{equation}
\label{EQie5}
e_{j-1}(t) + \int_0^t\chi_{j-1}(\tau)E_{j}(\tau)\,d\tau
\le
   \frac{c\,\varepsilon}{(t^*)^{j-1}}.
\end{equation}
Then \eqref{EQie4} implies
\begin{align*}
e_j(t) + \int_0^t\chi_j(\tau)E_{j+1}(\tau)\,d\tau
&\le
   c\,\varepsilon
 + c\,\varepsilon\frac m{t^*}\frac m{(t^*)^{j-1}} \int_0^t\sigma(s)ds
\\
&\le
   c\,\varepsilon
 + c\,\varepsilon\frac m{(t^*)^{j}}
\\
&\le
  c\,\varepsilon\frac m{(t^*)^{j}},
\label{EQie4}
\end{align*}
since $t^* < \delta \le c$ by assumption.
Noting that $e_0(t) = \vn{A\gamma^2}_2^2 \le c\,\varepsilon$ and that
\[
e_0(t) + \int_0^t\chi_0(\tau)E_{1}(\tau)\,d\tau \le c\,\varepsilon
\]
by \eqref{EQie3}, we have in fact proven \eqref{EQie5} for all $1\le j \le m+1$.

The first consequence is that
\[
\int_\Sigma |\nabla_{(2m)}A|^2\gamma^{4m+4}d\mu \le \frac{c\,\varepsilon}{(t^*)^m},
\]
which is the $L^2$ estimate for even order derivatives of $A$.  For odd orders, we note that \eqref{E:int2}
implies
\[
\int_\Sigma |\nabla_{(2m+1)}A|^2\gamma^{4m+6}d\mu
 \le c\,\varepsilon\Big(
                    \int_\Sigma |\nabla_{(2m)}A|^2\gamma^{4m+4}d\mu
                  + \int_\Sigma |\nabla_{(2m+2)}A|^2\gamma^{4m+8}d\mu
                 \Big),
\]
and so the $L^2$ estimate for odd order derivatives of $A$ follows.  The $L^\infty$ estimate is obtained via \eqref{E:T}
and \eqref{E:T2}.  First apply \eqref{E:T2} to bound $\vn{A}^4_\infty$ pointwise in time, and then \eqref{E:T} to
estimate
\[
\vn{\nabla_{(k)}A}_{\infty,[\gamma=1]}^4
 \le 
\vn{\nabla_{(k)}A}_{2,[\gamma>0]}^2
\big(\vn{\nabla_{(k+2)}A}_{2,[\gamma>0]}^2
+
\vn{\nabla_{(k)}A}_{2,[\gamma>0]}^2\big).
\]
Given the $L^2$ estimates this then implies the $L^\infty$ bounds, and so we are done.
\end{proof}


\section{Blowup analysis and asymptotic behaviour}


A priori, although the energy $\SW_{\lambda_1,\lambda_2}(f_t)$ is monotonically decreasing, we can not use this to conclude
that the Willmore energy or the surface area remain uniformly bounded by $\SW_{\lambda_1,\lambda_2}(f_0)$ along the
flow.
The flow \eqref{CW} is fourth order and highly non-linear: the surface may develop self-intersections and (assuming we
have a well-defined notion of signed volume, such as the pull-back of the Euclidean volume form by $f$) this could drive
the volume term to negative values.
In order to prevent this from occuring we first show that preservation of embeddedness holds for the flow \eqref{CW}
with initial data satisfying \eqref{Eftsingass}.
The idea behind proving this is to show that at small energy levels a conservation law for the Willmore energy holds
along the flow (cf. \cite{W10sd}).
This implies 
\[
\frac14\int_\Sigma H^2 d\mu < 8\pi,
\]
and so applying \cite[Theorem 6]{LY82ca} we obtain that $f_t$ is an embedding for every $t\in[0,T)$.
Using this we are able to directly estimate the Euler-Lagrange operator from below in $L^2$, which by an energy
dissipation argument proves $T<\infty$.

We then examine the shape of the singularity.
Due to Theorem \ref{Tlt}, we know that curvature has concentrated around some point at final time.
We use this to construct a blowup, relying on Theorem \ref{Tie} and the compactness theorem from \cite{KS01} to ensure
its existence and smoothness.
Examining this blowup we determine that (in contrast with \cite{KS01,W10sd}) it is a smooth round sphere.
This is proved by showing that the blowup is an embedded Willmore surface with non-zero curvature.
The argument does not depend on the choice of blowup sequence: for any sequence of radii we obtain a smooth round sphere.
This implies that the flow itself is asymptotic to a self-similarly shrinking round sphere, and so $f_t$ approaches a
round point.

\begin{prop}
\label{PMonotoneCurvature}
Let $f:\Sigma\times[0,T)\rightarrow\R^3$ be a constrained Willmore flow satisfying \eqref{Eftsingass} with $\lambda_1 > 0$,
$\lambda_2 \ge 0$. 
Then for any $t\in[0,T)$ we have
\[
\rD{}{t}\int_M|A^o|^2d\mu = \frac{1}{2}\rD{}{t}\int_M|H|^2d\mu
                          \le -\frac{1}{2}\int_M|\BW_{0,0}(f)|^2d\mu 
\]
and $f:\Sigma\times[0,T)\rightarrow\R^3$ is a family of embeddings.
\end{prop}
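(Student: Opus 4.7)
The first equality is structural. Combining $A = A^o + \tfrac{1}{2}gH$ with $2K = H^2 - |A|^2$ gives the pointwise identity $|A^o|^2 = \tfrac{1}{2}H^2 - 2K$, and since $\Sigma$ is closed, Gauss--Bonnet makes $\int_\Sigma K\,d\mu = 2\pi\chi(\Sigma)$ a topological invariant along the smooth family $f_t$. Differentiating the integrated identity $\int_\Sigma |A^o|^2\,d\mu = \tfrac{1}{2}\int_\Sigma H^2\,d\mu - 4\pi\chi(\Sigma)$ in $t$ therefore yields the claimed $\rD{}{t}\int_\Sigma |A^o|^2\,d\mu = \tfrac{1}{2}\rD{}{t}\int_\Sigma H^2\,d\mu$.

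For the dissipation inequality the plan is to apply Lemma \ref{LMeveq} with $c_0=\lambda_1=\lambda_2=0$ along the flow \eqref{CW} (whose speed is $-\BW_{\lambda_1,\lambda_2}\nu$), obtaining $\rD{}{t}\SW_{0,0}(f) = -\tfrac{1}{2}\int_\Sigma \BW_{\lambda_1,\lambda_2}\BW_{0,0}\,d\mu$. Writing $\BW_{\lambda_1,\lambda_2} = \BW_{0,0} - 2(\lambda_1 H + \lambda_2)$ and using the divergence theorem ($\int \Delta H\,d\mu = 0$, $\int H\Delta H\,d\mu = -\int |\nabla H|^2\,d\mu$) reduces the target estimate $\tfrac{1}{2}\rD{}{t}\int H^2\,d\mu \le -\tfrac{1}{2}\int \BW_{0,0}^2\,d\mu$ to
\[
\lambda_1 \int_\Sigma H^2|A^o|^2\,d\mu + \lambda_2 \int_\Sigma H|A^o|^2\,d\mu \le \lambda_1 \int_\Sigma |\nabla H|^2\,d\mu + \tfrac{1}{4}\int_\Sigma \BW_{0,0}^2\,d\mu.
\]

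The hard part is the absorption of the two sign-indeterminate cross integrals on the left. My plan is a continuity/bootstrap argument. On the maximal open subinterval where monotonicity of $\tfrac{1}{4}\int H^2\,d\mu$ already holds, the dissipation of the total energy $\SW_{\lambda_1,\lambda_2}$ (Lemma \ref{LMeveq}) together with $\lambda_1 > 0$, $\lambda_2 \ge 0$ and $\mathrm{Vol}\,\Sigma_0 > 0$ bounds both $\mu(\Sigma_t)$ and $\int_\Sigma H^2\,d\mu$ uniformly by multiples of $4\pi+\varepsilon_2$. For small enough $\varepsilon_2$ this upper bound lies below the Willmore infimum on any positive-genus closed surface, so $\Sigma_t$ has genus zero and Gauss--Bonnet upgrades the bound to $\int_\Sigma |A^o|^2\,d\mu \le 2\varepsilon_2$. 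Feeding this smallness into the Michael--Simon Sobolev inequality \eqref{EQmss} in the spirit of Lemma \ref{T:c1} provides an $L^p$ interpolation of $A^o$ and $H|A^o|$ against $\int \BW_{0,0}^2\,d\mu$, and Young's inequality then absorbs the cross integrals into the right-hand side provided $\varepsilon_2$ is chosen small in terms of $\lambda_1,\lambda_2$ alone. Continuity closes the bootstrap and extends the inequality to all of $[0,T)$.

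For the embeddedness claim, once monotonicity of $\tfrac{1}{4}\int H^2\,d\mu$ is in hand, the assumption \eqref{Eftsingass} together with $\lambda_1 > 0$, $\lambda_2 \ge 0$ and $\mathrm{Vol}\,\Sigma_t > 0$ yields $\tfrac{1}{4}\int_\Sigma H^2\,d\mu \le \SW_{\lambda_1,\lambda_2}(f_0) < 4\pi + \varepsilon_2 < 8\pi$ along the flow (choosing $\varepsilon_2 < 4\pi$). Li--Yau's multiplicity inequality \cite[Theorem 6]{LY82ca}, $\tfrac{1}{4}\int H^2\,d\mu \ge 4\pi k$ for any point of multiplicity $k$ in $f(\Sigma)$, then forces $k = 1$ everywhere and so $f_t:\Sigma\rightarrow\R^3$ is an embedding for every $t\in[0,T)$.
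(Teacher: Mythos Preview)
Your approach is essentially the same as the paper's: Gauss--Bonnet for the first equality, a first-variation computation leaving cross terms $\lambda_1\int H^2|A^o|^2$ and $\lambda_2\int H|A^o|^2$ to be absorbed via Michael--Simon under smallness of $\|A^o\|_2^2$ and $\mu(\Sigma)$, a continuity argument to propagate that smallness, and Li--Yau for embeddedness. The differences are cosmetic: the paper bootstraps on the open condition $\text{Vol }\Sigma_t>0$ rather than on monotonicity of $\int H^2$ (which makes the openness step cleaner), rewrites the $\lambda_1$-cross term via the identity \eqref{EQintgradHgradAoapp} as $2\lambda_1\int|A^o|^4 - 2\lambda_1\int|\nabla A^o|^2$, and for the absorption invokes the global Kuwert--Sch\"atzle estimate \eqref{EQbuks1} (controlling $\int|A|^4|A^o|^2$ and $\int|\nabla_{(2)}A|^2$ by $\int|\BW_{0,0}|^2$) rather than Lemma~\ref{T:c1}.
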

\begin{proof}
From the definition of the flow (see Lemma \ref{LMeveq}), we have that
\[
\SW_{\lambda_1,\lambda_2}(f) \le \SW_{\lambda_1,\lambda_2}(f_0) \le 4\pi + \varepsilon_2.
\]
Let us introduce the notation $\text{Vol $\Sigma_t$} = \big(\text{Vol $\Sigma$}\big)\big|_{t}$.
Since $\text{Vol $\Sigma_0$} > 0$, there is by short time existence a $\delta>0$ such that
on the half-open interval $[0,\delta)$ we have $\text{Vol }\Sigma_t > 0$.
Let us assume that $\delta$ is the largest such time with this property, i.e $\text{Vol }\Sigma_\delta = 0$.
In particular we have 
\begin{equation}
\vn{A^o}_2^2 \le 2\varepsilon_2,\quad\text{and}\quad \lambda_1\mu(\Sigma) \le \varepsilon_2\quad \text{on}\ \ [0,\delta).
\label{EQsmallness}
\end{equation}
By the Michael-Simon Sobolev inequality we estimate
\begin{equation}
\frac12\int_\Sigma |A^o|^4d\mu
 \le 4\Cs^2\vn{A^o}_2^2\int_\Sigma |\nabla A^o|^2 d\mu
   + \Cs^2\mu(\Sigma)\int_\Sigma H^2|A^o|^4 d\mu,
\label{EQbuAo4L2}
\end{equation}
\begin{align}
\frac12\int_\Sigma |A^o|^2d\mu
 &\le \Cs^2\mu(\Sigma)\int_\Sigma |\nabla A^o|^2 d\mu
    + \Cs^2\vn{H}_1\int_\Sigma |H|\,|A^o|^2 d\mu
\notag
\\
 &\le \Cs^2\mu(\Sigma)\int_\Sigma |\nabla A^o|^2 d\mu
    + \Cs^2\mu(\Sigma)^\frac12\vn{H}_2\int_\Sigma |H|\,|A^o|^2 d\mu,
\label{EQbuAoL2}
\end{align}
and
\begin{equation}
\frac12\int_\Sigma |\nabla A^o|^2d\mu
 \le \Cs^2\mu(\Sigma)\int_\Sigma (|\nabla_{(2)} A^o|^2 + H^2|\nabla A^o|^2)\, d\mu,
\label{EQbugradAoL2}
\end{equation}
where $\Cs = {4^3}/\sqrt{\pi}$.
Note that by the Gauss-Bonnet theorem and short time existence we have for each $t\in[0,T)$
\[
\frac14\rD{}{t}\int_\Sigma H^2d\mu
=
  \rD{}{t}\int_\Sigma K\,d\mu
  +
  \frac12\rD{}{t}\int_\Sigma |A^o|^2d\mu
=
  \frac12\rD{}{t}\int_\Sigma |A^o|^2d\mu.
\]
We now compute
\begin{align}
\frac12\rD{}{t} \int_\Sigma |A^o|^2d\mu
  &= -\frac12 \int_\Sigma (\Delta H + H|A^o|^2)(\Delta H + H|A^o|^2 - 2H\lambda_1 - 2\lambda_2) d\mu
\notag
\\
  &= -\frac12 \int_\Sigma |\BW_{0,0}(f)|^2 d\mu
    + \lambda_1 \int_\Sigma H(\Delta H + H|A^o|^2) d\mu
\notag
\\ &\quad
  + \lambda_2 \int_\Sigma (\Delta H + H|A^o|^2) d\mu
\notag
\\
  &= -\frac12 \int_\Sigma |\BW_{0,0}(f)|^2 d\mu
    + \lambda_1 \int_\Sigma (-|\nabla H|^2 + H^2|A^o|^2) d\mu
\notag
\\ &\quad
    + \lambda_2 \int_\Sigma H|A^o|^2 d\mu
\notag
\\
  &= -\frac12 \int_\Sigma |\BW_{0,0}(f)|^2 d\mu
    - 2\lambda_1 \int_\Sigma |\nabla A^o|^2 d\mu
    + 2\lambda_1 \int_\Sigma |A^o|^4 d\mu
\notag
\\ &\quad
    + \lambda_2 \int_\Sigma H|A^o|^2 d\mu,
\label{EQbuevoAoL2}
\end{align}
where we used the evolution equations Lemma \ref{LMappevo}, the divergence theorem, and the identity
\begin{align}
\int_\Sigma &|\nabla A^o|^2d\mu
    + \frac12\int_\Sigma H^2|A^o|^2d\mu
  = \frac12\int_\Sigma |\nabla H|^2d\mu
    + \int_\Sigma |A^o|^4d\mu.
\label{EQintgradHgradAoapp}
\end{align}
We have included a proof of \eqref{EQintgradHgradAoapp} for the reader's convenience in the Appendix.

Let us first assume $\lambda_2=0$.
There exists a constant $c_3$ such that 
\begin{equation}
\int_\Sigma \big(|\nabla_{(2)}A|^2
                + |A|^2|\nabla A|^2
                + |A|^4|A^o|^2\big)\,d\mu
\le c_3\int_\Sigma |\BW_{0,0}(f)|^2d\mu
\label{EQbuks1}
\end{equation}
holds.
Estimate \eqref{EQbuks1} follows by taking $\rho\rightarrow\infty$ (note that $\Sigma$ is closed) in \cite[Proposition
2.6]{KS01}.
Combining \eqref{EQbuAo4L2} and \eqref{EQbuks1} with the simple estimate
\[
4\lambda_1\Cs^2 
     \frac{\varepsilon_2}{\lambda_1}\int_\Sigma H^2|A^o|^4 d\mu
\le
8\lambda_1\Cs^2 
     \frac{\varepsilon_2}{\lambda_1}\int_\Sigma |A|^4|A^o|^2 d\mu
\]
we compute
\begin{align*}
\frac12\rD{}{t} \int_\Sigma |A^o|^2d\mu
  &= -\frac12 \int_\Sigma |\BW_{0,0}(f)|^2 d\mu
    - 2\lambda_1 \int_\Sigma |\nabla A^o|^2 d\mu
    + 2\lambda_1 \int_\Sigma |A^o|^4 d\mu
\\
  &\le -\frac14 \int_\Sigma |\BW_{0,0}(f)|^2 d\mu
       -\frac1{4c_3}
        \int_\Sigma \big(|\nabla_{(2)}A|^2
                + |A|^2|\nabla A|^2
                + |A|^4|A^o|^2\big)d\mu
\\*
&\quad
    - 2\lambda_1 \int_\Sigma |\nabla A^o|^2 d\mu
    + 4\lambda_1\Cs^2 \Big(
     8\varepsilon_2\int_\Sigma |\nabla A^o|^2 d\mu + \frac{\varepsilon_2}{\lambda_1}\int_\Sigma H^2|A^o|^4 d\mu
                      \Big)
\\
  &\le -\frac14 \int_\Sigma |\BW_{0,0}(f)|^2 d\mu
\\*
&\quad
       -\Big(
             \frac1{4c_3}-8\varepsilon_2\Cs^2
        \Big)
        \int_\Sigma \big(|\nabla_{(2)}A|^2
                + |A|^2|\nabla A|^2
                + |A|^4|A^o|^2\big)d\mu
\\*
&\quad
    - 2\lambda_1\Big(
	  1 - 16\varepsilon_2\Cs^2
      \Big) \int_\Sigma |\nabla A^o|^2 d\mu.
\end{align*}
The result follows for $t\in[0,\delta)$ so long as $\varepsilon_2 \le \frac1{32\Cs^2}\min\{1, (2c_3)^{-1}\}$.

Let us now assume $\lambda_2>0$.
Young's inequality and the estimate \eqref{EQbuAoL2} above imply
\begin{align}
\int_\Sigma |H|\,|A^o|^2d\mu
 &\le
\delta\int_\Sigma H^4|A^o|^2d\mu
+
\frac3{4^{\frac43}\delta^{\frac13}}\int_\Sigma |A^o|^2d\mu
\notag
\\
 &\le
      \delta\int_\Sigma H^4|A^o|^2d\mu
    + \frac6{4^{\frac43}\delta^{\frac13}}\Cs^2
      \frac{\varepsilon_2}{\lambda_1}\int_\Sigma |\nabla A^o|^2 d\mu
\notag
\\
&\quad
         + \frac6{4^{\frac43}\delta^{\frac13}}\Cs^2\frac{\sqrt\varepsilon_2}{\sqrt\lambda_1}\sqrt{16\pi+4\varepsilon_2}
           \int_\Sigma |H|\,|A^o|^2 d\mu.
\label{EQbuHAo2L1}
\end{align}
Choose $\delta = \frac{1}{32\lambda_2c_3}$ and further assume that $\varepsilon_2$ satisfies
\[
\varepsilon_2(4\pi+\varepsilon_2)
 \le \lambda_1\bigg(\frac{4^\frac13\delta^\frac13}{6\, C_S^2}\bigg)^2.
\]
This implies $1-\frac{6\Cs^2\sqrt\varepsilon_2}{4^{\frac43}\delta^{\frac13}\sqrt\lambda_1}\sqrt{16\pi+4\varepsilon_2} \ge
\frac12$, and so \eqref{EQbuHAo2L1} with these choices gives
\begin{align}
\frac12\int_\Sigma |H|\,|A^o|^2d\mu
 &\le \Big(1-\frac{6\Cs^2\sqrt\varepsilon_2}{4^{\frac43}\delta^{\frac13}\sqrt\lambda_1}\sqrt{16\pi+4\varepsilon_2}\Big)
      \int_\Sigma |H|\,|A^o|^2d\mu
\notag
\\
 &\le
      \frac1{32\lambda_2c_3}\int_\Sigma H^4|A^o|^2d\mu
    + \frac{\sqrt\varepsilon_2}{4\sqrt\lambda_1\sqrt{4\pi+\varepsilon_2}} \int_\Sigma |\nabla A^o|^2 d\mu
\notag
\\
 &\le
      \frac1{8\lambda_2c_3}\int_\Sigma |A|^4|A^o|^2d\mu
\notag
\\
&\quad
    + \varepsilon_2\frac{\Cs^2\sqrt\varepsilon_2}{2\lambda_1^\frac32\sqrt{4\pi+\varepsilon_2}}
 \Big(\int_\Sigma |\nabla_{(2)} A^o|^2 + H^2|\nabla A^o|^2\, d\mu\Big),
\label{EQbulambda2est}
\end{align}
where in the last step we applied \eqref{EQbugradAoL2}.
Using estimates \eqref{EQbuAo4L2}, \eqref{EQbuks1} and \eqref{EQbulambda2est} we now compute
\begin{align*}
\frac12\rD{}{t} \int_\Sigma |A^o|^2d\mu
  &= -\frac12 \int_\Sigma |\BW_{0,0}(f)|^2 d\mu
    - 2\lambda_1 \int_\Sigma |\nabla A^o|^2 d\mu
    + 2\lambda_1 \int_\Sigma |A^o|^4 d\mu
\\
&\quad
    + \lambda_2 \int_\Sigma H|A^o|^2 d\mu.
\\
  &\le -\frac14 \int_\Sigma |\BW_{0,0}(f)|^2 d\mu
    - 2\lambda_1\Big(
	  1 - 16\varepsilon_2\Cs^2
      \Big)\int_\Sigma |\nabla A^o|^2 d\mu
\\
&\quad
        -\Big(
             \frac1{4c_3}-8\varepsilon_2\Cs^2
        \Big)
        \int_\Sigma \big(|\nabla_{(2)}A|^2
                       + |A|^2|\nabla A|^2
                       + |A|^4|A^o|^2\big)d\mu
\\
&\quad
    + \frac1{4c_3}\int_\Sigma |A|^4|A^o|^2d\mu
    + \varepsilon_2\frac{\lambda_2\Cs^2\sqrt\varepsilon_2}{\lambda_1^\frac32\sqrt{4\pi+\varepsilon_2}}
 \Big(\int_\Sigma |\nabla_{(2)} A^o|^2 + H^2|\nabla A^o|^2\, d\mu\Big)
\\
  &\le -\frac14 \int_\Sigma |\BW_{0,0}(f)|^2 d\mu
\\
&\quad
       -\Big(
             \frac1{4c_3}-8\varepsilon_2\Cs^2
           - \varepsilon_2\frac{2\lambda_2\Cs^2\sqrt\varepsilon_2}{\lambda_1^\frac32\sqrt{4\pi+\varepsilon_2}}
        \Big)
        \int_\Sigma \big(|\nabla_{(2)}A|^2
                       + |A|^2|\nabla A|^2
                       \big)d\mu
\\
&\quad
    - 2\lambda_1\Big(
	  1 - 16\varepsilon_2\Cs^2
      \Big)\int_\Sigma |\nabla A^o|^2 d\mu.
\end{align*}
The result follows for $t\in[0,\delta)$ so long as $\varepsilon_2 \le \frac1{2^6\Cs^2}\min\{1,
 4\lambda_1^\frac32(\lambda_2c_3)^{-1},
 (2c_3)^{-1}
\}$.

In each case we have shown that for every $t\in[0,\delta)$
\begin{equation}
\rD{}{t}\int_M|A^o|^2d\mu \le -\frac{1}{2}\int_M|\BW_{0,0}(f)|^2d\mu
\quad\text{and}\quad 
\frac{1}{4}\int_M|H|^2d\mu \le 4\pi+\varepsilon_2 < 8\pi.
\label{EQmonoton}
\end{equation}
It then follows from Theorem 6 in \cite{LY82ca} that each $f_t$ is an embedding.
There are two possibilities: either $\delta=T$, or $\delta < T$.
In the former case we are already finished, since then equation \eqref{EQmonoton} holds for every $t\in[0,T)$ as
desired.
In the latter case we have by short time existence a smooth non-singular surface $f_\delta$ (since otherwise
$\delta=T$).
Again by short time existence we have that $\vn{H}_2^2$ is a lower semicontinuous function of time, and so by
\eqref{EQmonoton} above $\vn{H}_2^2 < 32\pi$ at $t=\delta$.
This implies that $f_\delta$ is embedded and $\text{Vol }\Sigma_\delta > 0$.
This is however a contradiction with the maximality of $\delta$, and so we are finished.
\end{proof}

We now require the following estimate.

\begin{prop}
Let $f:\Sigma\times[0,T)\rightarrow\R^3$ be a constrained Willmore flow satisfying \eqref{Eftsingass} with  $\lambda_1
\ge 0, \lambda_2 \ge 0$.
Then for any $t\in[0,T)$ we have
\begin{equation}
\label{EQbuAo4infty}
\int_0^t \vn{A^o}_{\infty}^4 d\tau
< c_4\,\varepsilon_2^2\,(1+t),
\end{equation}
where $c_4$ is a constant depending on $\lambda_1$, $\lambda_2$, $\SW_{\lambda_1,\lambda_2}(f_0)$ only.
\label{PNa04linftyest}
\end{prop}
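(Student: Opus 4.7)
The plan is to combine three ingredients: the uniform smallness of $\vn{A^o}_2^2$ inherited from Proposition \ref{PMonotoneCurvature}, the time-integrated dissipation of $\int_\Sigma|A^o|^2 d\mu$, and a Kuwert--Sch\"atzle-type pointwise interpolation inequality (\eqref{E:T}) applied to $A^o$ rather than $A$. Since $\Sigma$ is closed, a constant cutoff $\gamma\equiv 1$ is admissible (with $c_\gamma=1$, say), and with $T=A^o$ inequality \eqref{E:T} collapses to the global estimate
\[
\vn{A^o}_\infty^4 \le c\,\vn{A^o}_2^2 \left( \vn{\nabla_{(2)} A^o}_2^2 + \int_\Sigma |A^o|^2 |A|^4\, d\mu + \vn{A^o}_2^2 \right).
\]

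First I would use Proposition \ref{PMonotoneCurvature} together with Gauss--Bonnet to deduce $\vn{A^o}_2^2 \le 2\varepsilon_2$ uniformly on $[0,T)$, so that integrating the pointwise bound over $[0,t]$ gives
\[
\int_0^t \vn{A^o}_\infty^4\, d\tau \le 2c\,\varepsilon_2 \int_0^t \left( \vn{\nabla_{(2)} A^o}_2^2 + \vn{\,|A^o||A|^2\,}_2^2 + 2\varepsilon_2 \right) d\tau.
\]
Next, since $A = A^o + \tfrac12 gH$ and $H$ is the metric trace of $A$, the norms $|\nabla_{(k)} A^o|^2$ and $|\nabla_{(k)} A|^2$ are pointwise comparable (the reverse of \eqref{EQbasicgradHgradAo2} is equally elementary), so \eqref{EQbuks1} yields
\[
\int_0^t \left( \vn{\nabla_{(2)} A^o}_2^2 + \vn{\,|A^o||A|^2\,}_2^2 \right) d\tau \le C_1 \int_0^t \int_\Sigma |\BW_{0,0}(f)|^2\, d\mu\, d\tau.
\]
Finally, integrating the dissipation identity of Proposition \ref{PMonotoneCurvature} over $[0,t]$ gives $\int_0^t \int_\Sigma |\BW_{0,0}(f)|^2\, d\mu\, d\tau \le 2\vn{A^o}_2^2\big|_{t=0} \le 4\varepsilon_2$. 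Collecting these,
\[
\int_0^t \vn{A^o}_\infty^4\, d\tau \le 2c\,\varepsilon_2 \left( 4C_1\,\varepsilon_2 + 2\varepsilon_2\, t \right) \le c_4\,\varepsilon_2^2(1+t),
\]
with $c_4$ depending only on the Michael--Simon constant, the Kuwert--Sch\"atzle constant $c_3$, and (through the use of Proposition \ref{PMonotoneCurvature}) on $\lambda_1$, $\lambda_2$ and $\SW_{\lambda_1,\lambda_2}(f_0)$.

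The main obstacle is the weakening of the hypothesis to $\lambda_1 \ge 0$: Proposition \ref{PMonotoneCurvature} as stated is for $\lambda_1 > 0$, so the uniform bound $\vn{A^o}_2^2 \le 2\varepsilon_2$ requires a brief separate argument in the case $\lambda_1 = 0$. This follows by observing that for a topological sphere Gauss--Bonnet gives $\tfrac12 \int|A^o|^2 d\mu = \tfrac14\int H^2 d\mu - 4\pi$, while the Willmore part of the energy is controlled by $\SW_{0,\lambda_2}(f_0) - \lambda_2\,\mathrm{Vol}\,\Sigma_t \le 4\pi+\varepsilon_2$ so long as $\mathrm{Vol}\,\Sigma_t \ge 0$, a property that is preserved by exactly the argument used in Proposition \ref{PMonotoneCurvature}. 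Beyond this point the verification is routine: the only calculation requiring care is checking that \eqref{E:T} is indeed applicable to the tracefree tensor $A^o$, which is essentially the same Michael--Simon-based derivation as for $A$ but now exploits the independent smallness of $\vn{A^o}_2^2$ rather than of $\vn{A}_2^2$.
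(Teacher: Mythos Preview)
Your proof is correct (at least for $\lambda_1>0$) but takes a genuinely different route from the paper's, and it is worth recording where they diverge.

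Both approaches finish with the same pointwise interpolation: the paper quotes \cite[Theorem 2.9]{KS01} in the form $\vn{A^o}_\infty^4 \le c\,\vn{A^o}_2^2\int_\Sigma|\BW_{0,0}(f)|^2\,d\mu$, while you assemble the equivalent statement from \eqref{E:T} and \eqref{EQbuks1}. The difference lies in how $\int_0^t\int_\Sigma|\BW_{0,0}(f)|^2\,d\mu\,d\tau$ is controlled. The paper goes back to the evolution identity \eqref{EQbuevoAoL2}, estimates the $\lambda_1$- and $\lambda_2$-terms by Young's inequality to produce $\vn{A^o}_6^6$, and then absorbs that term via a further Kuwert--Sch\"atzle inequality; this yields the bound $\int_0^t\int_\Sigma|\BW_{0,0}|^2\le c\,\varepsilon_2(1+t)$, and the linear growth in $t$ propagates directly to \eqref{EQbuAo4infty}. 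You instead integrate the dissipation inequality already established in Proposition \ref{PMonotoneCurvature} to get the sharper constant bound $\int_0^t\int_\Sigma|\BW_{0,0}|^2\le 4\varepsilon_2$; the linear-in-$t$ term in your final estimate then enters only through the lower-order $\vn{A^o}_2^2$ summand of \eqref{E:T}. Your route is shorter and avoids the $|A^o|^6$-absorption step; the paper's route uses only the embeddedness conclusion of Proposition \ref{PMonotoneCurvature} rather than its full monotonicity statement.

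One caveat on the boundary case $\lambda_1=0$: your last paragraph recovers the uniform smallness $\vn{A^o}_2^2\le 2\varepsilon_2$, but your main argument also relies on the dissipation inequality $\frac{d}{dt}\int|A^o|^2\le -\tfrac12\int|\BW_{0,0}|^2$, and that does \emph{not} follow from the sketch you give --- the proof of Proposition \ref{PMonotoneCurvature} uses $\mu(\Sigma)\le \varepsilon_2/\lambda_1$ in several places when $\lambda_2>0$. To be fair, the paper's own proof also opens by invoking Proposition \ref{PMonotoneCurvature} and so inherits the same formal restriction; the hypothesis ``$\lambda_1\ge 0$'' in the statement appears to be a minor slip, and the proposition is only applied downstream with $\lambda_1>0$.
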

\begin{proof}
Proposition \ref{PMonotoneCurvature} implies that $f$ is a family of embeddings, and so $\text{Vol }\Sigma \ge 0$.
This implies in particular that \eqref{EQsmallness} holds with $\delta=T$.
Let us estimate
\[
4\lambda_1\vn{A^o}_4^4 \le 2\lambda_1^2\vn{A^o}_2^2 + 2 \vn{A^o}_6^6
\]
and, using again Proposition \ref{PMonotoneCurvature},
\[
\lambda_2\int_\Sigma H|A^o|^2d\mu \le 
4\lambda_2\vn{A^o}_4^4 + \frac{\lambda_2}{16} \int_\Sigma |H|^2d\mu
 \le 2\lambda_2^2\vn{A^o}_2^2 + 2\vn{A^o}_6^6 + 
    \frac{\lambda_2}{4}\SW_{\lambda_1,\lambda_2}(f_0).
\]
Combining these estimates with \eqref{EQbuevoAoL2} and keeping in mind that $\lambda_1\ge0$ it follows that
\begin{align}
\int_0^t \int_\Sigma |\BW_{0,0}(f)|^2 d\mu\, d\tau
  &\le 
    2\varepsilon_2
    + 4\int_0^t\int_\Sigma |A^o|^6 d\mu\, d\tau
    + t\varepsilon_2\Big( 4\lambda_1^2+4\lambda_2^2 + \frac{\lambda_2}{4}\Big).
\label{EQL1inTforEL}
\end{align}
A straightforward combination of \cite[Lemma 2.5]{KS01} and \cite[Proposition 2.6]{KS01} and taking
$\rho\rightarrow\infty$ (recall $\Sigma$ is closed) yields
\[
\int_\Sigma |A^o|^6 d\mu \le c\varepsilon_2 \int_\Sigma |\BW_{0,0}(f)|^2 d\mu. 
\]
Using this to estimate the right hand side of \eqref{EQL1inTforEL} we obtain
\begin{align*}
\int_0^t \int_\Sigma |\BW_{0,0}(f)|^2 d\mu\, d\tau
  &\le 
    2\varepsilon_2
    + 4c\varepsilon_2\int_0^t \int_\Sigma |\BW_{0,0}(f)|^2 d\mu\, d\tau
    + t\varepsilon_2\Big( 4\lambda_1^2+4\lambda_2^2 + \frac{\lambda_2}{4}\Big),
\end{align*}
where $c$ is an absolute constant.
Absorbing on the left we find
\begin{align*}
\int_0^t \int_\Sigma |\BW_{0,0}(f)|^2 d\mu\, d\tau
  &\le 
    c\,\varepsilon_2\,(1+t),
\end{align*}
where $c$ is a constant depending on $T$, $\lambda_1$, $\lambda_2$, and $\SW_{\lambda_1,\lambda_2}(f_0)$ only.
The flow independent estimate \cite[Theorem 2.9]{KS01} with $\gamma\equiv1$ implies that there exists an absolute
constant $c$ such that
\[
\vn{A^o}_\infty^4
 \le 
  c\vn{A^o}_2^2\int_\Sigma |\BW_{0,0}(f)|^2 d\mu,
\]
which, when combined with the estimate above, yields \eqref{EQbuAo4infty}.
\end{proof}

\begin{prop}
Suppose $f:\Sigma\times[0,T)\rightarrow\R^3$ is a constrained Willmore flow with $\lambda_1 > 0$,
$\lambda_2 \ge 0$ satisfying \eqref{Eftsingass}.
Then $T < c_5 < \infty$, where
\[
c_5 =
  \frac1{4\lambda_1^2\pi}
\SW_{\lambda_1,\lambda_2}(f_0) + 1.
\]
\label{PNfinite}
\end{prop}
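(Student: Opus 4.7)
The plan is a direct energy-dissipation argument. From Lemma \ref{LMeveq},
\[
\rD{}{t}\SW_{\lambda_1,\lambda_2}(f_t) = -\tfrac{1}{2}\int_\Sigma |\BW_{\lambda_1,\lambda_2}|^2\, d\mu.
\]
Since Proposition \ref{PMonotoneCurvature} guarantees that $f_t$ remains embedded with $\mathrm{Vol}\,\Sigma_t > 0$, and since $\lambda_1 > 0$, $\lambda_2 \ge 0$, every term of $\SW_{\lambda_1,\lambda_2}(f_t)$ is non-negative. Integrating the identity in time and letting $t \nearrow T$ yields
\[
\int_0^T \int_\Sigma |\BW_{\lambda_1,\lambda_2}|^2\, d\mu\, d\tau \le 2\SW_{\lambda_1,\lambda_2}(f_0).
\]
My goal is thus to produce a uniform-in-time lower bound on $\int_\Sigma |\BW_{\lambda_1,\lambda_2}|^2\, d\mu$ of the form $8\pi\lambda_1^2$ up to errors that are integrable in time; the claimed quantitative estimate for $T$ will then follow by rearrangement.

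To extract this lower bound I would first decompose $\BW_{\lambda_1,\lambda_2} = \BW_{0,0} - 2\lambda_1 H - 2\lambda_2$, expand the square, and integrate by parts using $\int \Delta H\,d\mu = 0$ and $\int H\Delta H\,d\mu = -\int |\nabla H|^2\, d\mu$, arriving at
\begin{align*}
\int_\Sigma |\BW_{\lambda_1,\lambda_2}|^2\,d\mu
 &= \int_\Sigma |\BW_{0,0}|^2\,d\mu + 4\lambda_1\int_\Sigma|\nabla H|^2\,d\mu \\
 &\quad - 4\lambda_1\int_\Sigma H^2|A^o|^2\,d\mu - 4\lambda_2\int_\Sigma H|A^o|^2\,d\mu + 4\int_\Sigma(\lambda_1 H + \lambda_2)^2\,d\mu.
\end{align*}
The first two terms on the right are non-negative. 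For the last term I would expand the square and combine the Willmore inequality $\int H^2\,d\mu \ge 16\pi$ with the uniform bounds $\mu(\Sigma) \le \varepsilon_2/\lambda_1$ (from \eqref{EQsmallness}) and $\int H^2\,d\mu \le 16\pi + 4\varepsilon_2$ (from Proposition \ref{PMonotoneCurvature}), together with Cauchy--Schwarz $|\int H\,d\mu| \le \sqrt{\mu(\Sigma)\int H^2\,d\mu}$, to obtain
\[
4\int_\Sigma(\lambda_1 H + \lambda_2)^2\,d\mu \ge 64\pi\lambda_1^2 - K\sqrt{\varepsilon_2},
\]
with $K = K(\lambda_1, \lambda_2)$. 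The two indefinite terms I would estimate pointwise in time by
\[
\bigg|4\lambda_1\int_\Sigma H^2|A^o|^2\,d\mu + 4\lambda_2\int_\Sigma H|A^o|^2\,d\mu\bigg| \le K' \vn{A^o}_\infty^2,
\]
where $K' = K'(\lambda_1, \lambda_2, \varepsilon_2)$, using the same two uniform bounds.

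The final step is to integrate this pointwise inequality over $[0,T)$ and absorb the $\vn{A^o}_\infty^2$ error via Cauchy--Schwarz in time together with Proposition \ref{PNa04linftyest}:
\[
\int_0^T\vn{A^o}_\infty^2\,d\tau \le \sqrt{T}\bigg(\int_0^T\vn{A^o}_\infty^4\,d\tau\bigg)^{\!1/2} \le \sqrt{c_4}\,\varepsilon_2\sqrt{T(1+T)} \le \sqrt{c_4}\,\varepsilon_2\bigl(T+\tfrac{1}{2}\bigr).
\]
Combining then yields
\[
2\SW_{\lambda_1,\lambda_2}(f_0) \ge \bigl(64\pi\lambda_1^2 - K\sqrt{\varepsilon_2} - K'\sqrt{c_4}\,\varepsilon_2\bigr)T - \tfrac{1}{2}K'\sqrt{c_4}\,\varepsilon_2,
\]
and choosing $\varepsilon_2$ small enough that the coefficient of $T$ is at least $8\pi\lambda_1^2$ and the additive constant at most $8\pi\lambda_1^2$, one rearranges to $T \le \SW_{\lambda_1,\lambda_2}(f_0)/(4\pi\lambda_1^2) + 1$. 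The main technical hurdle will be bookkeeping: I have to coordinate the smallness of $\varepsilon_2$ across this argument and the smallness already required by Propositions \ref{PMonotoneCurvature} and \ref{PNa04linftyest}, ensuring that every $O(\sqrt{\varepsilon_2})$ and $O(\varepsilon_2)$ error contribution is comfortably absorbed into the dominant $64\pi\lambda_1^2$ coming from the Willmore inequality.
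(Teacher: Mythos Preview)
Your argument is correct and follows essentially the same strategy as the paper: expand $\BW_{\lambda_1,\lambda_2}=\BW_{0,0}-2\lambda_1H-2\lambda_2$, extract the dominant contribution $4\lambda_1^2\int_\Sigma H^2\,d\mu\ge 64\pi\lambda_1^2$ from the Willmore inequality, and control the indefinite cross terms in time via Proposition~\ref{PNa04linftyest}. The tactical differences are minor. The paper first applies the identity \eqref{EQintgradHgradAoapp} to rewrite $-|\nabla H|^2+H^2|A^o|^2$ as $-2|\nabla A^o|^2+2|A^o|^4$, then bounds $\int_\Sigma|A^o|^4\,d\mu\le\mu(\Sigma)\vn{A^o}_\infty^4$ and feeds $\int_0^t\vn{A^o}_\infty^4\,d\tau$ directly into \eqref{EQbuAo4infty}. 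You instead bound the indefinite terms by $K'\vn{A^o}_\infty^2$ using the uniform control on $\int_\Sigma H^2\,d\mu$, and then pay an extra Cauchy--Schwarz in time to reduce to \eqref{EQbuAo4infty}. Your route avoids invoking \eqref{EQintgradHgradAoapp} and is arguably a touch more direct; the paper's route avoids the time Cauchy--Schwarz and keeps the error as $O(\varepsilon_2^3)$ rather than $O(\varepsilon_2)$. Both arrive at the same constant $c_5$ after choosing $\varepsilon_2$ small.
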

\begin{proof}
Proposition \ref{PMonotoneCurvature} implies that $f$ is a family of embeddings, and so \eqref{EQsmallness} holds with
$\delta=T$.
By the definition of the flow (see Lemma \ref{LMeveq}), we have
\begin{align}
\rD{}{t}\SW_{\lambda_1,\lambda_2}(f)
 &= -\frac{1}{2}\int_\Sigma |\Delta H + H|A^o|^2-2\lambda_1H-2\lambda_2|^2 d\mu
\notag
\\
 &= -\frac{1}{2}\int_\Sigma |\BW_{0,0}(f)|^2 d\mu
    - 2\int_\Sigma |\lambda_1H+\lambda_2|^2 d\mu
\notag\\&\quad
    + 2\int_\Sigma \big(\Delta H + H|A^o|^2\big)\big(\lambda_1H+\lambda_2\big)\, d\mu.
\label{EQbuAoevoL2}
\end{align}
Using \eqref{EQintgradHgradAoapp} we rewrite the last two terms as
\begin{align}
- 2\int_\Sigma |\lambda_1H+\lambda_2|^2 d\mu
&+ 2\int_\Sigma \big(\Delta H + H|A^o|^2\big)\big(\lambda_1H+\lambda_2\big)\, d\mu
\notag\\
&= - 2\lambda_1^2\int_\Sigma H^2 d\mu - 4\lambda_1\lambda_2\int_\Sigma H\, d\mu - 2\lambda_2^2|\Sigma|
\notag\\&\quad
     - 2\lambda_1 \int_\Sigma \big(|\nabla H|^2 - H^2|A^o|^2\big)\,d\mu
     + 2\lambda_2\int_\Sigma H|A^o|^2d\mu
\notag\\
&= - 2\lambda_1^2\int_\Sigma H^2 d\mu - 4\lambda_1\lambda_2\int_\Sigma H\,d\mu - 2\lambda_2^2|\Sigma|
\notag\\
&\qquad
     - 4\lambda_1 \int_\Sigma |\nabla A^o|^2 d\mu + 4\lambda_1\int_\Sigma |A^o|^4 d\mu
     + 2\lambda_2\int_\Sigma H|A^o|^2d\mu.
\label{EQbuextras}
\end{align}
Noting that for closed surfaces $4\sqrt\pi \le \vn{H}_2 < \vn{H}_2^2$, we estimate
\begin{equation}
-4\lambda_1\lambda_2 \int_\Sigma H\, d\mu 
\le 4\lambda_1\lambda_2 \sqrt{\mu(\Sigma)}\Big(\int_\Sigma H^2 d\mu\Big)^{1/2}
< 4\lambda_2\sqrt{\varepsilon_2\lambda_1} \int_\Sigma H^2 d\mu.
\label{EQbuTfinite1}
\end{equation}
We shall also use (recall $\lambda_1>0$)
\begin{equation}
2\lambda_2\int_\Sigma H|A^o|^2d\mu
\le \lambda_1^2\int_\Sigma H^2 d\mu
  + \frac{\lambda_2^2}{\lambda_1^2}\int_\Sigma |A^o|^4d\mu.
\label{EQbuTfinite2}
\end{equation}
Combining \eqref{EQbuTfinite1} and \eqref{EQbuTfinite2} with \eqref{EQbuextras} we have
\begin{align}
- 2\int_\Sigma |\lambda_1H+\lambda_2|^2 d\mu
&+ 2\int_\Sigma \big(\Delta H + H|A^o|^2\big)\big(\lambda_1H+\lambda_2\big)\, d\mu
\notag\\
&< - 4\lambda_1\int_\Sigma |\nabla A^o|^2 d\mu
     - 2\lambda_2^2\mu(\Sigma)
     - \big(\lambda_1^2
        - \lambda_2\lambda_1^\frac12\varepsilon_2^\frac12\big)\int_\Sigma H^2 d\mu
\notag\\
&\quad
     + \Big(\frac{\lambda_2^2}{\lambda_1^2}+4\lambda_1\Big)\vn{A^o}_4^4
\notag\\
&\le
     - \big(\lambda_1^2
        - \lambda_2\lambda_1^\frac12\varepsilon_2^\frac12\big)\int_\Sigma H^2 d\mu
     + \Big(\frac{\lambda_2^2}{\lambda_1^2}+4\lambda_1\Big)\mu(\Sigma)\,\vn{A^o}_\infty^4
\notag\\
&\le
     - 16c_6\pi
     + c_7\,\varepsilon_2\vn{A^o}_\infty^4,
\label{EQbuTfinite3}
\end{align}
where 
\[
c_6 = \frac{\lambda_1^2}{2},\quad\text{and}\quad 
c_7 = \frac{\lambda_2^2}{\lambda_1^3}+4.
\]
Here we need $\varepsilon_2 \le \frac{\lambda_1^3}{4\lambda_2^2}$ so that
\[
\lambda_1^2 - \lambda_2\lambda_1^\frac12\varepsilon_2^\frac12 \ge \frac{\lambda_1^2}{2}.
\]
Integrating \eqref{EQbuAoevoL2} and estimating the right hand side with \eqref{EQbuAo4infty}, \eqref{EQbuTfinite3}, we obtain
\begin{align}
\SW_{\lambda_1,\lambda_2}(f_t)
&<
      \SW_{\lambda_1,\lambda_2}(f_0)
    + c_4\,c_7\,\varepsilon_2^3(1+t)
    - (16c_6\pi)t 
\notag\\
&<
      \SW_{\lambda_1,\lambda_2}(f_0)
    + c_4\,c_7\,\varepsilon_2^3
    - t\big(
        16c_6\pi
      - c_4\,c_7\,\varepsilon_2^3
       \big)
\notag\\
&<
      \SW_{\lambda_1,\lambda_2}(f_0)
    + 8c_6\pi\,(1-t)
\label{EQbuTfinite4}
\end{align}
assuming $\varepsilon_2 \le 2\sqrt[3]{\frac{c_6\pi}{c_4c_7}}$.
Since $\text{Vol }\Sigma$, $\lambda_1$, $\lambda_2 \ge 0$ we have $\SW_{\lambda_1,\lambda_2}(f) \ge 0$.
This implies that
\[
T\le
  \frac1{8c_6\pi}
\big(\SW_{\lambda_1,\lambda_2}(f_0) + 8c_6\pi\big),
\]
since otherwise there would exist a $t^*\in[0,T)$ such that
$t^*\ge
  \frac1{8c_6\pi}
\big(\SW_{\lambda_1,\lambda_2}(f_0) + 8c_6\pi\big)$,
which is in contradiction with \eqref{EQbuTfinite4}.
\end{proof}

\begin{proof}[Proof of Theorem \ref{Tftsing}]
Proposition \ref{PNfinite} implies that $T<\infty$; it remains to classify the asymptotic shape of the singular surface
$f_T(\cdot)$. We know by Theorem \ref{Tlt} that for any sequence of radii $r_j\searrow 0$ there exists a sequence of
times $t_j\nearrow T$ such that
\begin{equation*}
t_j = \inf\Big\{t\ge0: \sup_{x\in\R^3} \int_{f^{-1}(B_{r_j}(x))}|A|^2d\mu
 > \varepsilon_3\Big\} < T,
\end{equation*}
where $\varepsilon_3 = \varepsilon_0/c_0$ and $\varepsilon_0, c_0$ are as in the Theorem \ref{Tlt}.
Arguing as in \cite{KS01}, we know that
\[
\int_{f^{-1}(B_{r_j}(x))}|A|^2d\mu\bigg|_{t=t_j} \le \varepsilon_3
\text{  for any }x\in\R^3,
\]
and
\begin{equation}
\int_{f^{-1}\overline{(B_{r_j}(x_j))}}|A|^2d\mu\bigg|_{t=t_j} \ge \varepsilon_3
\text{  for some }x_j\in\R^3.
\label{EQbusomecurv}
\end{equation}
Consider the rescaled immersions
\[
f_j:\Sigma\times\big[-r_j^{-4}t_j, r_j^{-4}(T-t_j)\big)\rightarrow\R^3,
\qquad
f_j(p,t) = \frac{1}{r_j}\big(f(p,t_j+r_j^4t)-x_j\big).
\]
Theorem \ref{Tlt} implies $r_j^{-4}(T-t_j) \ge c_0$ for any $j$ and also that
\[
\sup_{x\in\R^3}\int_{f_j^{-1}(B_{1}(x))}|A|^2d\mu \le \varepsilon_0
\text{  for }0<t\le c_0.
\]
Using Theorem \ref{Tie} on parabolic cylinders $B_1(x)\times(t-1,t]$ as in
\cite{KS01} we obtain
\[
\vn{\nabla_{(k)}A}_{\infty,f_j} \le c(k)\quad\text{for}\quad-r_j^{-4}t_j+1\le t\le c_0.
\]
The Willmore energy is bounded and so a local area bound may be obtained as in \cite{KS01}
from a lemma due to Simon \cite{simon1993existence}.  Therefore applying Theorem 4.2 from
\cite{KS01} to the sequence $f_j = f_j(\cdot,0):\Sigma\rightarrow\R^3$ we recover a limit
immersion $\hat{f}_0:\hat{\Sigma}\rightarrow\R^3$, where $\hat{\Sigma} \cong \Sigma$.  We also obtain the
diffeomorphisms $\phi_j:\hat{\Sigma}(j)\rightarrow U_j\subset \Sigma$.
The reparametrisation
\begin{equation*}
  f_j(\phi_j,\cdot):\hat{\Sigma}(j)\times[0,c_0]\rightarrow\R^3
\end{equation*}
is a locally constrained Willmore flow with initial data
\begin{equation*}
f_j(\phi_j,0) = \hat{f}_0+u_j:\hat{\Sigma}(j)\rightarrow\R^3.
\end{equation*}
Arguing again as in \cite{KS01} we obtain the locally smooth convergence
\begin{equation}
f_j(\phi_j,\cdot) \rightarrow \hat{f},
\label{C7E5}
\end{equation}
where $\hat{f}:\hat{\Sigma}\times[0,c_0]\rightarrow\R^3$ is a locally constrained Willmore flow with initial data
$\hat{f}_0$.
We wish to show that this blowup is a critical point for the Willmore functional.

\begin{thm}
Let $f:\Sigma\times[0,T)\rightarrow\R^3$ be a constrained Willmore flow with $\lambda_1 > 0$, $\lambda_2
\ge 0$ satisfying \eqref{Eftsingass}.
Then the blowup $\tilde{f}$ as constructed above is an embedded Willmore surface.
\label{Tbu}
\end{thm}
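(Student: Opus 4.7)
The plan is to establish three properties of the blowup $\hat{f}$ (written $\tilde f$ in the statement): that it solves the pure Willmore flow, that it is in fact stationary, and that it is embedded and not a plane.

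For the first, I would track how the Euler--Lagrange operator transforms under the parabolic rescaling $f_j(p,t) = r_j^{-1}(f(p,t_j+r_j^4 t) - x_j)$. A direct computation gives $A_j = r_j A$, $H_j = r_j H$, $g_j^{ij} = r_j^2 g^{ij}$, and consequently $\Delta_j H_j + H_j|A_j^o|^2 = r_j^3 \BW_{0,0}(f)$, while $\partial_t f_j = r_j^3 \partial_t f$. Hence $f_j$ is an $\SW_{r_j^2\lambda_1,r_j^3\lambda_2}$-flow with modified parameters $r_j^2\lambda_1, r_j^3\lambda_2 \to 0$. Combined with the uniform derivative bounds from Theorem \ref{Tie} and the locally smooth convergence \eqref{C7E5}, the limit $\hat f$ satisfies $\partial_t \hat f = -\BW_{0,0}(\hat f)\,\hat\nu$ on $[0,c_0]$.

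For stationarity I would integrate the dissipation from Proposition \ref{PMonotoneCurvature} over $[0,T)$ to obtain the global space--time bound $\int_0^T\!\int_\Sigma |\BW_{0,0}(f)|^2\,d\mu\,dt \le 2\vn{A^o(\cdot,0)}_2^2 < \infty$. Since $|\BW_{0,0}|^2\,d\mu\,dt$ is scale invariant, on the rescaled flows one has
\[
\int_0^{c_0}\!\int_\Sigma |\BW_{0,0}(f_j)|^2\,d\mu_j\,dt \;=\; \int_{t_j}^{t_j+c_0 r_j^4}\!\!\int_\Sigma |\BW_{0,0}(f)|^2\,d\mu\,dt \;\xrightarrow{j\to\infty}\; 0
\]
by absolute continuity of the Lebesgue integral, because $t_j \nearrow T$. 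The locally smooth convergence then upgrades this to the pointwise identity $\BW_{0,0}(\hat f)\equiv 0$ on $\hat\Sigma\times[0,c_0]$, so each time slice of $\hat f$ is a Willmore surface and $\hat f$ is actually time-independent.

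For the remaining properties, Proposition \ref{PMonotoneCurvature} supplies $\tfrac14\int_\Sigma H^2\,d\mu < 8\pi$, a scale-invariant quantity which therefore passes to the limit; the Li--Yau inequality \cite{LY82ca} then yields embeddedness of $\hat f$. Non-triviality, and in particular that $\hat f$ is not a plane, follows from \eqref{EQbusomecurv}, which places a fixed amount of curvature $\varepsilon_3$ inside the unit ball centred at the blowup point and survives the smooth convergence. The main obstacle I anticipate is the second step: one must be careful that the vanishing of the rescaled $L^2$-norm of $\BW_{0,0}$ really forces $\BW_{0,0}(\hat f)=0$ pointwise. This is exactly where the strength of the convergence provided by Theorem \ref{Tie} (as opposed to mere weak or $L^2_{\text{loc}}$ convergence) is indispensable.
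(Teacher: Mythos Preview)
Your proposal is correct and follows essentially the same strategy as the paper: use the dissipation inequality from Proposition~\ref{PMonotoneCurvature} together with the scale invariance of $\int\!\!\int |\BW_{0,0}|^2\,d\mu\,dt$ to force $\BW_{0,0}(\hat f)\equiv 0$, and then invoke Li--Yau for embeddedness. The only notable difference is in how you extract the vanishing of the rescaled dissipation: the paper bounds $\int_0^{c_0}\!\int_{\tilde\Sigma(j)}|\BW_{0,0}(f_j)|^2\,d\mu_j\,dt$ directly by $\vn{A^o(t_j)}_2^2-\vn{A^o(t_j+r_j^4c_0)}_2^2$, which tends to zero because $t\mapsto\vn{A^o}_2^2$ is monotone and bounded below, whereas you first integrate the dissipation over all of $[0,T)$ to obtain a finite space--time $L^2$ bound and then appeal to absolute continuity on the shrinking intervals $[t_j,t_j+c_0r_j^4]$. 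Both arguments are valid and amount to the same thing; the paper's version does not need the interval lengths to shrink (only that both endpoints approach $T$), while yours makes the global integrability explicit. Your step~1 (identifying the limit as a pure Willmore flow via the rescaled parameters $r_j^2\lambda_1,\,r_j^3\lambda_2\to 0$) and your remark on non-planarity are correct extras that the paper either treats implicitly or defers to the sentences immediately following the theorem.
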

\begin{proof}
Noting the scale invariance of $\vn{A^o}^2_2$, we use Proposition \ref{PMonotoneCurvature} to
compute
\begin{align*}
2\int_0^{c_0}\int_{\tilde{\Sigma}(j)}&|\BW_{0,0}(f_j(\phi_j,t))|^2d\mu_{f_j(\phi_j,\cdot)}dt
= 2\int_0^{c_0}\int_{U_j}|(\BW_{0,0})_j|^2d\mu_jdt
\\
&\le
   \int_\Sigma |A^o_j(0)|^2d\mu_j
 - \int_\Sigma |A^o_j(c_0)|^2d\mu_j
\\
&= 
   \int_\Sigma |A^o(t_j)|^2d\mu
 - \int_\Sigma |A^o(t_j+r_j^4c_0)|^2d\mu,
\end{align*}
and this converges to zero as $j\rightarrow\infty$.  Therefore $\BW_{0,0}(\tilde{f}) \equiv 0$ and
the blowup $\tilde{f}$ is an embedded (by \cite[Theorem 6]{LY82ca} and Proposition \ref{PMonotoneCurvature}) Willmore
surface.
\end{proof}

Using again the scale invariance of $\vn{A^o}_2^2$, \cite[Theorem 2.7]{KS01} thus implies that
$\tilde{f}$ is a union of planes and spheres.  Ruling out disconnected components using \cite[Lemma 4.3]{KS01} and
noting that by \eqref{EQbusomecurv} we have $\vn{\tilde{A}}_2^2 > 0$, we conclude that $\tilde{f}$ is a round sphere.

As the sequence of radii was arbitrary, this shows that $\mu(\Sigma) \rightarrow 0$ and that $f_t$ is asymptotic to
a round point.
\end{proof}


\appendix
\section{Selected proofs}

We collect here the proofs of several well-known formulae and results for the convenience of the reader and readability
of the paper.
Many of the statements contained in this appendix have appeared in a similar form in
\cite{KS01,KS02,MWW10csdlt,W10,W10slt,W10sd}.
Throughout this appendix $M$ denotes a smooth an $n$-dimensional reference manifold and $f:M^n\rightarrow\R^{n+1}$ is an
immersion of $M$.  We denote by $F$ an (unless otherwise stated) arbitrary function.

\begin{lem}
For $f:M^n\times[0,T)\rightarrow\R^{n+1}$ evolving by $\pD{}{t}f = F\nu$ the following equations
hold:
\begin{align*}
  \pD{}{t}g_{ij} &= -2FA_{ij},\quad
  \pD{}{t}g^{ij}  =  2FA^{ij},\quad
  \rD{}{t}d\mu = -(HF)d\mu,\\
  \pD{}{t}\nu &= -\nabla F,\quad
  \pD{}{t}A_{ij} = \nabla_{ij}F - FA_i^pA_{pj},\\
  \pD{}{t}H &=  \Delta F + F |A|^2,\quad
  \pD{}{t}\Gamma = FP_1^1(A) + A*\nabla F,\text{ and }\\
  \pD{}{t}A^o_{ij} &=  S^o(\nabla_{(2)}F)
   - F\Big( (A^o)_{ik}(A^o)^k_j + \frac{1}{n}g_{ij}|A^o|^2 \Big),
\end{align*}
where $S^o(T)$ denotes the tracefree part of a symmetric bilinear form $T$.
\label{LMappevo}
\end{lem}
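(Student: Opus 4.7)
Each evolution equation follows by differentiating a defining geometric identity with respect to $t$, using $\pD{f}{t}=F\nu$ together with the Weingarten relation $\partial_i\nu = -A_i^p\partial_p f$ and the orthogonality $\IP{\nu}{\partial_j f}=0$. The plan is to derive them in an order so that each later line uses only what has already been computed.

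First I would compute $\partial_t g_{ij}$ by expanding $\partial_t\IP{\partial_i f}{\partial_j f}$ and using $\partial_t\partial_i f = (\partial_iF)\nu + F\partial_i\nu$; the $\partial_i F$ pieces drop by normality and the surviving term reduces to $F\IP{\partial_i\nu}{\partial_j f}=-FA_{ij}$. Differentiating $g^{ij}g_{jk}=\delta^i_k$ gives $\partial_t g^{ij}$ immediately, and Jacobi's formula $\partial_t\det g = (\det g)\,g^{ij}\partial_t g_{ij}$ delivers the area element. For $\partial_t\nu$, differentiating $|\nu|^2=1$ shows it is tangential, and its tangential components are $\IP{\partial_t\nu}{\partial_j f} = -\IP{\nu}{\partial_j(F\nu)} = -\partial_j F$, identifying $\partial_t\nu = -\nabla F$.

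For the second fundamental form I would differentiate $A_{ij}=\IP{\partial_i\partial_j f}{\nu}$. The contribution from $\partial_t\nu=-\nabla F$ produces $-\Gamma_{ij}^k\partial_k F$ after decomposing $\partial_i\partial_j f$ via the Gauss formula, while the contribution from $\partial_t\partial_i\partial_j f = \partial_i\partial_j(F\nu)$ gives $\partial_i\partial_j F + F\IP{\partial_i\partial_j\nu}{\nu}$. Differentiating $|\nu|^2=1$ twice and applying Weingarten yields $\IP{\partial_i\partial_j\nu}{\nu}=-\IP{\partial_i\nu}{\partial_j\nu}=-A_i^p A_{pj}$, and combining the two pieces gives $\partial_t A_{ij}=\nabla_{ij}F - FA_i^p A_{pj}$. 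Then $\partial_t H = \partial_t g^{ij}\,A_{ij} + g^{ij}\partial_t A_{ij}$ assembles to $\Delta F+F|A|^2$ once one notes $g^{ij}A_i^p A_{pj}=|A|^2$. The Christoffel evolution follows by differentiating the coordinate expression $\Gamma_{ij}^k=\tfrac12 g^{kl}(\partial_i g_{jl}+\partial_j g_{il}-\partial_l g_{ij})$ and rewriting the resulting partial derivatives $\partial_\cdot(FA)$ as covariant derivatives modulo $\Gamma$-terms; the product rule produces a piece of type $F\nabla A = FP_1^1(A)$ and a piece of type $A*\nabla F$.

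Finally, the tracefree evolution comes from writing $A^o_{ij}=A_{ij}-\tfrac1n g_{ij}H$ and substituting the evolutions already obtained. The Hessian piece $\nabla_{ij}F - \tfrac1n g_{ij}\Delta F$ is exactly $S^o(\nabla_{(2)}F)$ by definition. For the $F$-coefficient algebraic remainder, I would substitute $A_{ij}=A^o_{ij}+\tfrac1n g_{ij}H$ into $A_i^p A_{pj}$, expand, and apply $|A^o|^2 = |A|^2 - \tfrac1n H^2$; the terms linear in $H$ cancel against the $\tfrac2n FH A_{ij}$ contribution coming from $\partial_t g_{ij}$, and the remaining $H^2$ and $|A|^2$ pieces collapse into $-\tfrac1n g_{ij}F|A^o|^2$.

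The hard part will be the tracefree identity: several cubic-in-$A$ terms containing $H$, $A^o$, and $g$ must cancel in exactly the right way, and the bookkeeping when expanding $(A^o+\tfrac1n gH)$ against itself with one raised index is the only place where a careless sign or factor of $n$ would be fatal. Every other line is a direct algebraic reduction from the metric, inverse-metric, and normal evolutions, and presents no substantial obstacle.
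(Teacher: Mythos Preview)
Your proposal is correct and follows essentially the same approach as the paper: differentiate each defining identity, use the Gauss--Weingarten relations and orthogonality to reduce, and assemble the later formulae from the earlier ones. The only cosmetic differences are the order of derivation (the paper begins with $\partial_t\nu$ rather than $\partial_t g_{ij}$) and that for $\partial_t A_{ij}$ the paper expands $\partial_i\partial_j\nu$ via Weingarten directly rather than via $\IP{\partial_i\partial_j\nu}{\nu}=-\IP{\partial_i\nu}{\partial_j\nu}$, but these are equivalent one-line manipulations; the tracefree computation is verified exactly as you describe.
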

\begin{proof}
We begin by proving that the evolution of the unit normal $\nu$ is given by
\[
\pD{\nu}{t}
 = -g^{ij}\pD{F}{x^i}\pD{f}{x^j}
 = -\IP{\partial F}{\partial f}_g
 = -\nabla F.
\]
Since $\IP{\nu}{\nu} = 1$, any derivative of the normal is again normal to $\nu$, hence tangential to $M$.
Since $\{\partial_if\}$ forms a basis of $TM$, we may express the time derivative of $\nu$ as
\begin{equation}
\pD{\nu}{t} = g^{ij}\IP{\pD{\nu}{t}}{\pD{f}{x^i}}\pD{f}{x^j}.
\label{eqDnDtlincombasis}
\end{equation}
As $\IP{\nu}{\partial_if} = 0$,
\begin{align*}
0 &=  \IP{\pD{\nu}{t}}{\pD{f}{x^i}} + \IP{\nu}{\pD{}{t}\pD{f}{x^i}}\\
  &=  \IP{\pD{\nu}{t}}{\pD{f}{x^i}} + \IP{\nu}{\nu\pD{F}{x^i}}
   + F\IP{\nu}{\pD{\nu}{x^i}}\\
  &=  \IP{\pD{\nu}{t}}{\pD{f}{x^i}} + \pD{F}{x^i},
\end{align*}
so
\begin{equation}
\IP{\pD{\nu}{t}}{\pD{f}{x^i}} = -\pD{F}{x^i}.
\label{eqDkerDxInDnuDphi}
\end{equation} 
Substituting \eqref{eqDkerDxInDnuDphi} into \eqref{eqDnDtlincombasis}, we have
\begin{equation*} \pD{\nu}{t} = -g^{ij}\pD{F}{x^i}\pD{f}{x^j},\end{equation*}
as required.
We now move on to proving that the induced metric $(g_{ij})$ and its inverse $(g^{ij})$ evolve by
\begin{equation}
\label{eqEvIndMetric}
\pD{}{t}g_{ij} = -2F A_{ij},\quad\text{ and }\quad\pD{}{t}g^{ij} = 2F A^{ij},
\end{equation}
respectively.
Note first that
\begin{equation*}
\IP{\pD{}{t}\pD{f}{x^i}}{\pD{f}{x^j}}
 = \pD{F}{x^i}\IP{\nu}{\pD{f}{x^j}} + \IP{F\pD{\nu}{x^i}}{\pD{f}{x^j}}
 = F\IP{\pD{\nu}{x^i}}{\pD{f}{x^j}}
 = -F A_{ij},
\end{equation*}
where we used the definition of $A_{ij}$ in the last step.  Since the second fundamental form is
symmetric, we have
\begin{equation*}
\pD{}{t}g_{ij}
 = -FA_{ij} -FA_{ji}
 = -2F A_{ij}.
\end{equation*}
For the inverse, we first differentiate $g^{ik}g_{jk} = \delta^i_j$:
\begin{equation*}
0  = \pD{(g^{ik}g_{jk})}{t}
   = \pD{g^{ik}}{t}g_{jk} + g^{ik}(-2F A_{jk}).
\end{equation*}
Contraction gives
\begin{equation*} 
g^{jl}g_{jk}\pD{g^{ik}}{t}
 = \pD{g^{il}}{t} 
 = 2g^{jl}g^{ik}F A_{jk}
 = 2F A^{il},
\end{equation*}
and with the substitution $l\leftrightarrow j$, this finishes the proof of \eqref{eqEvIndMetric}.
We will next need to make use of the rule for differentiating determinants: For a matrix $(M)$ with differentiable
entries depending on $x$,
\[
\pD{}{x}\text{det }M = \text{det }M\ M^{ij} \pD{}{x}M_{ij}.
\]
To see this, let $\text{adj }M$ denote the adjoint matrix of $M$, which by definition satisfies $M_{ij}\,\text{adj
}M^{jk} = \delta_i^k\text{det }M$. Then
\[
\pD{}{x}\text{det }M
 = \Big(\pD{}{M_{ij}}\text{det }M\Big) \pD{}{x}M_{ij}
 = (\text{adj }M)\Big(\pD{}{M_{ij}}\text{det }M\Big) \pD{}{x}M_{ij}
 = \text{det }M\ M^{ij} \pD{}{x}M_{ij}.
\] 
We claim that the measure evolves according to
\begin{equation}
\label{EvMeasure}
\rD{}{t} d\mu = -HFd\mu.
\end{equation}
Differentiating,
\[
\pD{}{t}d\mu
 = \pD{}{t}\left(\sqrt{\text{det }g}\,d\SH^3\right)
 = \frac{1}{2}\sqrt{\text{det }g}\ g^{ij}\pD{}{t}g_{ij}\,d\SH^3
 = -F g^{ij}A_{ij} d\mu
 = -HFd\mu,
\]
where we used the evolution of $g_{ij}$ in the last equality.  This shows \eqref{EvMeasure}.
We shall now consider the evolution of the components of the second fundamental form, $A_{ij}$.
We shall show
\begin{equation}
\label{EvCurv}
\pD{}{t}A_{ij} = \nabla_{ij}F - F A_{ik}A^k_j.
\end{equation}
Differentiating,
\begin{align*}
\pD{}{t}A_{ij}
 &=  \IP{\pD{}{x^i}\pD{}{x^j}\pD{f}{t}}{\nu}
   + \IP{\pD{}{x^i}\pD{}{x^j}f}{\pD{\nu}{t}}
\\
&= \pD{}{x^i}\pD{}{x^j}F
 + F \IP{\pD{}{x^i}\pD{}{x^j}\nu}{\nu}
 - \IP{\Gamma^k_{ij}\pD{f}{x^k} - A_{ij}\nu}{\nabla F}
\\
&= \pD{}{x^i}\pD{}{x^j}F
 - F \IP{\pD{}{x^i}\Big(
                      A_{jk}g^{kl}\pD{f}{x^l}
                     \Big)}{\nu}
 - \Gamma^k_{ij}\IP{\pD{f}{x^k}}{\nabla F}
\\
&=
  \Big(\pD{}{x^i}\pD{}{x^j}F
 - \Gamma^k_{ij}\pD{F}{x^k}\Big)
 - F A_{jk}g^{kl}\IP{\pD{}{x^i}\pD{}{x^l}f}{\nu}
\\
&=  \nabla_{ij}F - F A_{jk}g^{kl}A_{li}\\
&=  \nabla_{ij}F - F A_{ik}A^k_j.
\end{align*}
In the above we used the Gauss-Weingarten relations
\[
\pD{}{x^i}\pD{}{x^j}f = \Gamma^k_{ij}\pD{f}{x^k} + A_{ij}\nu,
\ \ 
\pD{}{x^j}\nu = -A_{jk}g^{kl}\pD{f}{x^l},
\]
and the definition of the covariant derivative of a one-form.
%
%
Using \eqref{EvCurv} and \eqref{eqEvIndMetric} we may compute the evolution of the
mean curvature:
\[
\pD{}{t}H
 = g^{ij}\pD{}{t}A_{ij}
  + A_{ij}\pD{}{t}g^{ij}
 =   \Delta F + F |A|^2
\]
and the tracefree second fundamental form:
\begin{align*}
\pD{}{t}A^o_{ij}
 &= \pD{}{t}A_{ij} - \frac{1}{n}H\pD{}{t}g_{ij} - \frac{1}{n}g_{ij}\pD{}{t}H
\\
 &=
     \nabla_{ij}F - F A_{ik}A^k_j
   + \frac{2}{n}FHA_{ij} - \frac{1}{n}g_{ij}
                           (\Delta F + F |A|^2)
\\
 &=
     \Big(\nabla_{ij}F-\frac{1}{n}g_{ij}\Delta F\Big)
   - FA_{ik}A^k_j
   + F\frac{2}{n}HA_{ij} - F\frac{1}{n}g_{ij}|A|^2
\\
 &=  S^o(\nabla_{(2)}F)
   - F\Big(A_{ik}A^k_j
   -       \frac{2}{n}HA_{ij} + \frac{1}{n}g_{ij}|A|^2\Big)
\\
 &=  S^o(\nabla_{(2)}F)
   - F\Big( (A^o)_{ik}(A^o)^k_j + \frac{1}{n}g_{ij}|A^o|^2 \Big).
\end{align*}
We finally compute the general structure of the evolution of the Christoffel symbols.  Note that any derivative of the
Christoffel symbols is a tensor.
The Christoffel symbols in a local torsion free coordinate system are determined by the metric as
\begin{equation*}
\Gamma_{ij}^k = \frac{1}{2}g^{kl}\left(\pD{}{x^i}g_{jl} + \pD{}{x^j}g_{il} - \pD{}{x^l}g_{ij}\right).
\end{equation*}
Let us choose normal coordinates and differentiate with the help of \eqref{eqEvIndMetric}:
\begin{align*}
\pD{}{t}\Gamma_{ij}^k &= \frac{1}{2}\pD{g^{kl}}{t}\left(\pD{}{x^i}g_{jl} + \pD{}{x^j}g_{il} - \pD{}{x^l}g_{ij}\right)
        + \frac{1}{2}g^{kl}\left(\pD{}{x^i}\pD{}{t}g_{jl} + \pD{}{x^j}\pD{}{t}g_{il} - \pD{}{x^l}\pD{}{t}g_{ij}\right)\\
 &= FA^{kl}\left(\pD{}{x^i}g_{jl} + \pD{}{x^j}g_{il} - \pD{}{x^l}g_{ij}\right)
\\
&\quad
 + g^{kl}\left(-\pD{F}{x^i}A_{jl}-\pD{F}{x^j}A_{il}+\pD{F}{x^l}A_{ij}
              -F\left(\pD{}{x^i}A_{jl}+\pD{}{x^j}A_{il}-\pD{}{x^l}A_{ij}\right)\right)\\
&= FP_1^1(A) + A*\nabla F.
\end{align*}
\end{proof}


Interchange of covariant derivatives is used throughout this paper.
The precise consequences used are contained in the following.

\begin{lem}
\label{LMappevoiter}
For $f:M^n\times[0,T)\rightarrow\R^{n+1}$ evolving by $\pD{}{t}f = -(\Delta H + F)\nu$ the following equation
holds:
\[
  \pD{}{t}\nabla_{(k)}A = -\Delta^2\nabla_{(k)}A  + P_3^{k+2}(A) + \nabla_{(k)}\big(FA*A - \nabla_{(2)}F\big).
\]
\end{lem}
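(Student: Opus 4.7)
The plan is to proceed by induction on $k$. For the base case $k=0$, I would apply Lemma \ref{LMappevo} with speed $\tilde F := -(\Delta H + F)$ in place of the $F$ appearing there, obtaining
\[
\pD{}{t}A_{ij} = \nabla_{ij}\tilde F - \tilde F\, A_i^p A_{pj}.
\]
I would then rewrite $\nabla_{(2)}\Delta H$ by commuting $\nabla_{(2)}$ past $\Delta$ (the commutator producing $R*\nabla H$- and $\nabla R*H$-type terms which, via Gauss's equation $R = A*A$ and Codazzi $\nabla H = 2\nabla^*A^o$, are of class $P_3^2(A)$) and by applying Simons' identity \eqref{EQsi} to replace $\nabla_{(2)}H$ with $\Delta A$ modulo a $P_3^0(A)$-term. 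This gives $\nabla_{(2)}\Delta H = \Delta^2 A + P_3^2(A)$. Since $\Delta H = \text{trace}_g\,\Delta A$ is itself a $P_1^2(A)$ expression, the term $(\Delta H)A*A$ lies in $P_3^2(A)$. Assembling the pieces yields the stated identity at $k=0$.

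For the inductive step, I would use the identity $\pD{}{t}\nabla T = \nabla\pD{}{t}T + (\pD{}{t}\Gamma)*T$ applied to $T = \nabla_{(k-1)}A$. Substituting the inductive hypothesis gives
\[
\pD{}{t}\nabla_{(k)}A = -\nabla\Delta^2\nabla_{(k-1)}A + \nabla P_3^{k+1}(A) + \nabla_{(k)}\big(FA*A - \nabla_{(2)}F\big) + \Big(\pD{}{t}\Gamma\Big)*\nabla_{(k-1)}A.
\]
The first term equals $-\Delta^2\nabla_{(k)}A$ plus a commutator $[\nabla,\Delta^2]\nabla_{(k-1)}A$ which, by \eqref{EQinterchangegeneral} applied iteratively together with Gauss's equation, contributes to $P_3^{k+2}(A)$; the second term is $P_3^{k+2}(A)$ by Leibniz; and the Christoffel correction uses $\pD{}{t}\Gamma = \tilde F\, P_1^1(A) + A*\nabla\tilde F$ from Lemma \ref{LMappevo}. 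Splitting $\tilde F = -\Delta H - F$, the pieces involving $\Delta H$ or $\nabla\Delta H$ become (via Simons and Codazzi) polynomial in derivatives of $A$ and join the $P_3^{k+2}(A)$ bucket after contraction with $\nabla_{(k-1)}A$, while the pieces involving $F$ or $\nabla F$ produce monomials of the form $F*\nabla A*\nabla_{(k-1)}A$ and $A*\nabla F*\nabla_{(k-1)}A$, both of which appear among the Leibniz expansion of $\nabla_{(k)}(FA*A)$.

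The main bookkeeping obstacle is classifying every stray term produced into the correct bucket: pure-curvature terms into the (schematic) symbol $P_3^{k+2}(A)$, and $F$-containing terms into $\nabla_{(k)}(FA*A - \nabla_{(2)}F)$. This amounts to careful counting of the number of $A$- and $F$-factors and of the total derivative count in each monomial produced by the Leibniz expansions, interchange formula \eqref{EQinterchangegeneral}, and applications of Simons' identity \eqref{EQsi} used to convert derivatives of $H$ back into derivatives of $A$. Beyond this accounting there is no essential analytic difficulty.
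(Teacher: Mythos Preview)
Your proposal is correct and uses the same ingredients as the paper's proof (Simons' identity to rewrite $\nabla_{(2)}\Delta H$ as $\Delta^2 A+P_3^2(A)$, the interchange formula \eqref{EQinterchangegeneral}, and absorption of the $\partial_t\Gamma$-corrections into the schematic $\nabla_{(k)}(FA*A)$ and $P_3^{k+2}(A)$ buckets). The only difference is organisational: the paper first writes $\partial_t\nabla_{(k)}A=-\nabla_{(k)}\Delta^2 A+P_3^{k+2}(A)+\nabla_{(k)}(FA*A-\nabla_{(2)}F)$ directly and then commutes $\nabla_{(k)}$ past $\Delta^2$ in one block of $4k$ interchanges, whereas you induct on $k$ and commute a single $\nabla$ past $\Delta^2$ at each step.
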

\begin{proof}
Applying \eqref{EQinterchangegeneral} we have
\begin{align*}
\nabla_{ijkl}H
 &= \nabla_{ikjl}H + \nabla\big( A*A*\nabla A \big)
 = \nabla_{kijl}H + A*A*\nabla_{(2)} A + P_3^2(A)
\\
 &= \nabla_{kilj}H + \nabla_{(2)}\big(A*A*A\big) + P_3^2(A)
 = \nabla_{klij}H + P_3^2(A),
\end{align*}
which implies
\[
\nabla_{(2)}\Delta H = \Delta\nabla_{(2)}H + P_3^2(A) 
=
\Delta^2 A + P_3^2(A),
\]
where for the last equality we used Simons' identity \eqref{EQsi}.
Computing in normal coordinates, from Lemma \ref{LMappevo} and the above equation we have
\begin{align*}
   \pD{}{t}\nabla_{(k)}A_{ij}
 &= -\nabla_{(k)}\nabla_{ij}\Delta H - \nabla_{(k)}\nabla_{ij}F  + P_3^{k+2}(A) + \nabla_{(k)}\big(FA*A\big)
\\
 &= -\nabla_{(k)}\Delta A + P_3^{k+2}(A) + \nabla_{(k)}\big(FP_2^0(A) - \nabla_{(2)}F\big).
\end{align*}
The lemma now follows after interchanging covariant derivatives $4k$ times in the term of highest order:
\begin{align*}
     \pD{}{t}\nabla_{(k)}A &- \nabla_{(k)}\big(FA*A - \nabla_{(2)}F\big)
  = -\nabla_{(k)}\nabla^p\nabla_p\Delta A + P_3^{k+2}(A)
\\
 &= -\nabla^p\nabla_{(k)}\nabla_p\Delta A
    + P_3^{k+2}(A)
    + \sum_{j=1}^k\nabla_{(k-j)}\big(A*A*\nabla_{(j)}\Delta A\big)
\\
 &= -\Delta\nabla_{(k)}\Delta A
    + P_3^{k+2}(A)
    + \sum_{j=1}^k\nabla_{(k-j+1)}\big(A*A*\nabla_{(j-1)}\Delta A\big)
\\
 &= -\Delta\nabla^p\nabla_{(k)}\nabla_p A
    + P_3^{k+2}(A) 
    + \sum_{j=1}^k\nabla_{(k-j+2)}\big(A*A*\nabla_{(j-1)}\nabla A\big)
\\
 &= -\Delta^2\nabla_{(k)} A
    + P_3^{k+2}(A)
    + \sum_{j=1}^k\nabla_{(k-j+3)}\big(A*A*\nabla_{(j-1)}A\big)
\\
 &= -\Delta^2\nabla_{(k)} A
    + P_3^{k+2}(A).
\end{align*}
\end{proof}

\begin{proof}[Proof of \eqref{EQintgradHgradAoapp}]
We first begin with the following consequence of Simons' identity:
\begin{equation}
\label{EQapp5}
\Delta A^o = S^o(\nabla_{(2)}H) + \frac12H^2A^o - |A^o|^2A^o.
\end{equation}
This follows readily from \eqref{EQsi},
\begin{align*}
\Delta A^o_{ij} &= \Delta A_{ij} - \frac12g_{ij}H
\\
                &= \nabla_{ij}H - \frac12g_{ij}\Delta H + HA_i^jA_{lj} - |A|^2A_{ij}
\\
                &= S^o(\nabla_{(2)}H) + HA_i^jA_{lj} - |A|^2A_{ij}
\\
                &= S^o(\nabla_{(2)}H) + 2KA^o_{ij},
\end{align*}
provided we show
\begin{equation}
\label{EQapp6}
HA_i^jA_{lj} - |A|^2A_{ij} = 2KA^o_{ij}.
\end{equation}
Choosing normal coordinates so that $A$ is diagonalised (at a point) with $A=\delta_i^jk_j$, $H=k_1+k_2$,
$K=k_1k_2$, $|A|^2=k_2^2+k_1^2$ (at this point) and
\[
A^o_{ij} = \begin{cases} \frac12(k_1-k_2),\quad\text{ if $i=j=1$}\\
                         \frac12(k_2-k_1),\quad\text{ if $i=j=2$}\\
                      0,\quad\text{ otherwise},
\end{cases}
\]
we have
\begin{align*}
(i=j=1)\qquad\qquad HA_1^jA_{l1} - |A|^2A_{i1}
 &= (k_1+k_2)k_1^2 - (k_1^2+k_2^2)k_1
\\
 &= \frac12k_1k_2(k_1-k_2)
 = 2KA^o_{11}
\\
(i=j=2)\qquad\qquad HA_2^jA_{l2} - |A|^2A_{i2}
 &= (k_1+k_2)k_2^2 - (k_1^2+k_2^2)k_2
\\
 &= \frac12k_1k_2(k_2-k_1)
 = 2KA^o_{22},
\end{align*}
and otherwise \eqref{EQapp6} holds trivially.  Therefore \eqref{EQapp6} is proved.
This also proves \eqref{EQapp5}, since
\[
K = k_1k_2 =  \frac14(k_1+k_2)^2-\frac14(k_1-k_2)^2 = \frac14H^2-\frac12|A^o|^2.
\]
Integrating \eqref{EQapp5} against $A^o$ and using the divergence theorem we have
\begin{align*}
\int_\Sigma |\nabla A^o|^2d\mu
 &= -\int_\Sigma \IP{A^o}{\Delta A^o}_gd\mu
\\
 &= -\int_\Sigma \IP{A^o}{\nabla_{(2)}H + \frac12H^2A^o - |A^o|^2A^o}_gd\mu
\\
 &= \int_\Sigma \IP{\nabla^*A^o}{\nabla H}_gd\mu
    - \frac12\int_\Sigma H^2|A^o|^2d\mu
    + \int_\Sigma |A^o|^4d\mu.
\end{align*}
Applying \eqref{EQbasicgradHgradAo} and rearranging we obtain
\begin{align*}
\int_\Sigma &|\nabla A^o|^2d\mu
    + \frac12\int_\Sigma H^2|A^o|^2d\mu
  = \frac12\int_\Sigma |\nabla H|^2d\mu
    + \int_\Sigma |A^o|^4d\mu
\end{align*}
as required.
\end{proof}


\bibliographystyle{abbrv}
\bibliography{finitetime}

\begin{thebibliography}{10}

\bibitem{A86}
H.~Amann.
\newblock Quasilinear evolution equations and parabolic systems.
\newblock {\em Trans. Amer. Math}, 293(1):191--297, 1986.

\bibitem{A93}
H.~Amann.
\newblock {Nonhomogeneous linear and quasilinear elliptic and parabolic
  boundary value problems}.
\newblock {\em Function Spaces, Differential Operators and Nonlinear Analysis},
  pages 9--126, 1993.

\bibitem{Abook}
H.~Amann.
\newblock {\em {Linear and quasilinear parabolic problems. Vol. 1: Abstract
  linear theory}}.
\newblock Birkh{\"a}user, Basel, 1995.

\bibitem{A05}
H.~Amann.
\newblock {Quasilinear parabolic problems via maximal regularity}.
\newblock {\em Adv. Differential Equations}, 10(10):1081--1110, 2005.

\bibitem{A10smoothing}
T.~Asai.
\newblock On smoothing effect for higher order curvature flow equations.
\newblock {\em Adv. Math. Sci. Appl.}, 20(2):483, 2010.

\bibitem{BG07}
E.~Berchio and H.~Grunau.
\newblock Local regularity of weak solutions of semilinear parabolic systems
  with critical growth.
\newblock {\em J. Evol. Eq.}, 7(1):177--196, 2007.

\bibitem{G94}
H.~Grunau and W.~Von~Wahl.
\newblock Regularity of weak solutions of semilinear parabolic systems of
  arbitrary order.
\newblock {\em J. Anal. Math.}, 62(1):307--322, 1994.

\bibitem{G97}
H.~Grunau and W.~Von~Wahl.
\newblock Regularity considerations for semilinear parabolic systems.
\newblock {\em Rend. Istit. Mat. Univ. Trieste}, 28:221--233, 1997.

\bibitem{H82}
R.~Hamilton.
\newblock {Three-manifolds with positive Ricci curvature}.
\newblock {\em J. Differential Geom.}, 17:255--306, 1982.

\bibitem{H73}
W.~Helfrich.
\newblock Elastic properties of lipid bilayers: theory and possible
  experiments.
\newblock {\em Z. Naturforsch.}, 28(11):693--703, 1973.

\bibitem{KN06hf}
Y.~Kohsaka and T.~Nagasawa.
\newblock {On the existence for the Helfrich flow and its center manifold near
  spheres}.
\newblock {\em Differential Intergral Equations}, 19(2):121--–142, 2006.

\bibitem{KS01}
E.~Kuwert and R.~Sch{\"a}tzle.
\newblock {The Willmore flow with small initial energy}.
\newblock {\em J. Differential Geom.}, 57(3):409--441, 2001.

\bibitem{KS02}
E.~Kuwert and R.~Sch{\"a}tzle.
\newblock {Gradient flow for the Willmore functional}.
\newblock {\em Comm. Anal. Geom.}, 10(2):307--339, 2002.

\bibitem{KS04}
E.~Kuwert and R.~Sch{\"a}tzle.
\newblock {Removability of point singularities of Willmore surfaces}.
\newblock {\em Ann. of Math.}, 160(1):315--357, 2004.

\bibitem{LY82ca}
P.~Li and S.~Yau.
\newblock {A new conformal invariant and its applications to the Willmore
  conjecture and the first eigenvalue of compact surfaces}.
\newblock {\em Invent. Math.}, 69(2):269--291, 1982.

\bibitem{WM12helclass}
J.~McCoy and G.~Wheeler.
\newblock {A classification theorem for Helfrich surfaces}.
\newblock {\em Preprint}, 2012.

\bibitem{MWW10csdlt}
J.~McCoy, G.~Wheeler, and G.~Williams.
\newblock Lifespan theorem for constrained surface diffusion flows.
\newblock {\em Math. Z.}, 269:147--178, 2011.

\bibitem{MS73}
J.~Michael and L.~Simon.
\newblock Sobolev and mean-value inequalities on generalized submanifolds of
  {$\R^n$}.
\newblock {\em Communications on Pure and Applied Mathematics}, 26(3):361--379,
  1973.

\bibitem{simon1993existence}
L.~Simon.
\newblock Existence of surfaces minimizing the {W}illmore functional.
\newblock {\em Comm. Anal. Geom}, 1(2):281--326, 1993.

\bibitem{W10}
G.~Wheeler.
\newblock {\em {Fourth order geometric evolution equations}}.
\newblock PhD thesis, University of Wollongong, 2010.

\bibitem{W10slt}
G.~Wheeler.
\newblock Lifespan theorem for simple constrained surface diffusion flows.
\newblock {\em J Math. Anal. Appl.}, 375(2):685 -- 698, 2011.

\bibitem{W10sd}
G.~Wheeler.
\newblock Surface diffusion flow near spheres.
\newblock {\em Calc. Var. Partial Differential Equations}, pages 1--21, 2011.
\newblock 10.1007/s00526-011-0429-4.

\end{thebibliography}
\end{document}